\documentclass{amsart}
\usepackage{enumerate}

\numberwithin{equation}{section}

\newtheorem{theorem}{Theorem}[section]
\newtheorem{corollary}[theorem]{Corollary}
\newtheorem{definition}[theorem]{Definition}
\newtheorem{lemma}[theorem]{Lemma}
\newtheorem{proposition}[theorem]{Proposition}
\newtheorem{problem}[theorem]{Problem}

\newcommand{\M}{{\mathcal M}}

\numberwithin{equation}{section}

\begin{document}

\title[\emph{}]{$\Phi$-moment inequalities for independent and freely independent random variables}

\author[]{Yong Jiao  }
\address{School of Mathematics and Statistics, Central South University, Changsha 410075, China}
\email{jiaoyong@csu.edu.cn}
\curraddr{School of Mathematics and Statistics, University of NSW, Sydney,  2052, Australia}
\email{yong.jiao@unsw.edu.au}

\author[]{Fedor Sukochev}
\address{School of Mathematics and Statistics, University of NSW, Sydney,  2052, Australia}
\email{f.sukochev@unsw.edu.au}

\author[]{Guangheng Xie  }
\address{School of Mathematics and Statistics, Central South University, Changsha 410075, China}
\email{xiegh@csu.edu.cn}

\author[]{Dmitriy  Zanin}
\address{School of Mathematics and Statistics, University of NSW, Sydney,  2052, Australia}
\email{d.zanin@unsw.edu.au}

\subjclass[2010]{Primary: 46L52, 46L53, 47A30; Secondary: 60G42}

\keywords{$\Phi$-moment inequalities, Johnson-Schechtman inequalities, free probability, Kruglov operators,  noncommutative maximal inequalities}

\thanks{ Yong Jiao is supported by NSFC(11471337), Hunan Provincial Natural Science Foundation(14JJ1004) and The International Postdoctoral Exchange Fellowship Program. }

\begin{abstract}
This paper is devoted to the study of $\Phi$-moments of sums of independent/freely independent  random variables. More precisely, let $(f_k)_{k=1}^n$ be a sequence of positive (symmetrically distributed) independent random variables and let $\Phi$ be an Orlicz function with $\Delta_2$-condition. We provide an equivalent expression for the quantity $\mathbb{E}(\Phi(\sum_{k=1}^n f_k))$ in term of the sum of disjoint copies of the sequence $(f_k)_{k=1}^n.$ We also prove an analogous result in the setting of free probability. Furthermore, we provide an equivalent characterization of $\tau(\Phi(\sup^+_{1\leq k\leq n}x_k))$ for positive freely independent random variables and also present some new results on free Johnson-Schechtman inequalities in the quasi-Banach symmetric operator space.
\end{abstract}

\maketitle

\section{Introduction}

The main theme of this article is twofold: it concerns  $\Phi$-moment estimates of independent random variables and of freely independent self-adjoint operators 
affiliated with a finite von Neumann algebra.

In order to explain the classical probability theory roots of our study, we need to recall two outstanding results published simultaneously in 1970, due to Kruglov \cite{Kruglov} and Rosenthal \cite{R}, respectively. The results in  \cite{Kruglov} were concerned with infinitely divisible distributions occurring in the analysis of the classical Levy-Khintchine formula. Let $f$ be a random variable on $(0,1)$, and let $\pi(f)$ denote the random variable $\sum_{k=1}^Nf_k,$ where $f_k,$ $k\geq1,$ are independent copies of $f$ and $N$ is a Poisson random variable independent from the sequence $(f_k)$. In \cite{Kruglov}, the following $\Phi$-moment theorem was proved. 

\begin{theorem}[Kruglov Theorem]
\label{Kruglov}
Suppose that $\Phi$ is a positive continuous function on $\mathbb{R}$ with $\Phi(0)=0$ and suppose that it satisfies one of the following conditions.
\begin{enumerate}[{\rm (i)}]
\item\label{kra} $\Phi(t+s)\leq B\Phi(t)\Phi(s)$ for every $s,t\in\mathbb{R}$ and some constant $B>0$.
\item\label{krb} $\Phi(t+s)\leq B(\Phi(t)+\Phi(s))$ for every $s,t\in\mathbb{R}$ and some constant $B>0$.
\end{enumerate}
For an arbitrary random variable $f$ the conditions $\mathbb{E}(\Phi(f))<\infty$ and $\mathbb{E}(\Phi(\pi(f)))<\infty$ are equivalent.
\end{theorem}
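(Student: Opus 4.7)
The plan is to condition on $N$ in order to rewrite $\mathbb{E}(\Phi(\pi(f)))$ as an explicit Poisson series and then to exploit the structural hypotheses on $\Phi$ to control each term. Setting $\lambda:=\mathbb{E}N$ and using the independence of $N$ from $(f_{k})$ together with $\Phi(0)=0$, I would first establish
\begin{equation*}
\mathbb{E}(\Phi(\pi(f)))=\sum_{n=1}^{\infty}\frac{e^{-\lambda}\lambda^{n}}{n!}\,\mathbb{E}\Bigl(\Phi\Bigl(\sum_{k=1}^{n}f_{k}\Bigr)\Bigr).
\end{equation*}
This identity will serve as the backbone of both implications.

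For the direction $\mathbb{E}(\Phi(\pi(f)))<\infty\Rightarrow\mathbb{E}(\Phi(f))<\infty$, it suffices to lower-bound the series by its $n=1$ term, namely $e^{-\lambda}\lambda\,\mathbb{E}(\Phi(f))$, which immediately forces the integrability of $\Phi(f)$. The converse is the substantive half. Here the idea is to iterate the hypothesis on $\Phi$ until a single sum of length $n$ is reduced to an expression in the individual $\Phi(f_{k})$. Under condition \eqref{kra}, a straightforward induction on $n$ yields $\Phi(f_{1}+\cdots+f_{n})\leq B^{n-1}\prod_{k=1}^{n}\Phi(f_{k})$; independence of the $f_{k}$ then gives
\begin{equation*}
\mathbb{E}\Bigl(\Phi\Bigl(\sum_{k=1}^{n}f_{k}\Bigr)\Bigr)\leq B^{n-1}M^{n},\qquad M:=\mathbb{E}(\Phi(f)).
\end{equation*}
Substituting into the Poisson series produces the convergent quantity $B^{-1}e^{-\lambda}(e^{\lambda BM}-1)$. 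Under condition \eqref{krb} I expect the analogous iteration (after replacing $B$ by $\max(B,1)$ if needed) to produce the additive bound $\Phi(f_{1}+\cdots+f_{n})\leq B^{n-1}\sum_{k=1}^{n}\Phi(f_{k})$, so that the $n$-th expectation is controlled by $nB^{n-1}M$ and the full series telescopes to something of the form $\lambda M\,e^{\lambda(B-1)}$.

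The main obstacle, though a modest one, lies in verifying that the factorial decay $1/n!$ of the Poisson weights indeed dominates the combined growth $nB^{n-1}$ or $B^{n-1}M^{n}$ on the right-hand sides; this is precisely where the Poisson distribution of $N$ earns its keep, and is ultimately what singles out Kruglov's construction in this problem. No positivity or symmetry of $f$ is used; the assumption that \eqref{kra}--\eqref{krb} hold on all of $\mathbb{R}$ is exactly what allows $\Phi$ to be applied to real (possibly negative) sums without any sign bookkeeping.
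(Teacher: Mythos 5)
The paper does not actually prove Theorem~\ref{Kruglov}; it is recalled from Kruglov's 1970 paper \cite{Kruglov} as background. However, your argument for the nontrivial direction is exactly the calculation the paper carries out in Lemma~\ref{phik}: condition on the Poisson number of summands, iterate the subadditivity-type hypothesis to get $\Phi\bigl(\sum_{k=1}^{n}s_k\bigr)\leq B^{n-1}\sum_{k=1}^{n}\Phi(s_k)$, take expectations term by term, and observe that the factorial in the Poisson weights absorbs the resulting exponential growth (yielding $e^{B-1}\,\mathbb{E}\Phi(f)$ there, $\lambda\,M\,e^{\lambda(B-1)}$ in your normalization). Your treatment of the multiplicative case~\eqref{kra}, using independence to pass to $B^{n-1}M^{n}$, and your one-line lower bound for the converse implication are both correct, so the proposal is sound and aligned with the paper's approach where the two overlap.
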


The Rosenthal theorem \cite{R}  concerns only the special case $\Phi(t)=|t|^p$, $p\ge 1$ and was established while studying the $L_p$-norm of a sum of independent
functions.

\begin{theorem}[Rosenthal Theorem]
\label{theor: Rosenthal}


If $p> 2,$ then there exists a constant $K_p>0$ such that for
an arbitrary sequence $\{f_k\}_{k=1}^\infty\subset L_p$ of
independent functions satisfying $\int_0^1f_k(t)\,dt=0$
$(k=1,2,\dots)$ and for every $n\ge 1$ the following estimates
hold:
\begin{equation*}
\frac12\max\left\{\left(\sum_{k=1}^n\|f_k\|_p^p\right)^{1/p},
\left(\sum_{k=1}^n\|f_k\|_2^2\right)^{1/2}\right\} \le
\Big\|\sum_{k=1}^n f_k\Big\|_p\le
\end{equation*}
\begin{equation}
\le K_p\max\left\{\left(\sum_{k=1}^n\|f_k\|_p^p\right)^{1/p},
\left(\sum_{k=1}^n\|f_k\|_2^2\right)^{1/2}\right\}.\label{eq2imp}
\end{equation}
\end{theorem}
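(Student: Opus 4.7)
The plan is to treat the two inequalities separately. For the lower bound, I would first symmetrize: if $(f_k')$ denotes an independent copy of $(f_k)$, then $(f_k-f_k')$ is symmetric, the mean-zero condition combined with Jensen's inequality gives $\|f_k\|_p\le\|f_k-f_k'\|_p\le 2\|f_k\|_p$, and the triangle inequality yields $\|\sum_k(f_k-f_k')\|_p\le 2\|\sum_k f_k\|_p$, so it suffices to work with symmetric summands. For such a sequence, $(\varepsilon_k f_k)$ and $(f_k)$ are equidistributed for independent Rademachers $(\varepsilon_k)$, and Khintchine's inequality applied pointwise in $f$ yields
\[
\mathbb{E}\Bigl|\sum_k f_k\Bigr|^p \ge c_p\,\mathbb{E}\Bigl(\sum_k f_k^2\Bigr)^{p/2}.
\]
The embedding $\ell_2\hookrightarrow\ell_p$, valid for $p\ge 2$, gives $(\sum_k f_k^2)^{p/2}\ge\sum_k|f_k|^p$ pointwise, delivering the $(\sum_k\|f_k\|_p^p)^{1/p}$-piece of the lower bound after taking expectations. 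The $L_2$-piece follows immediately from $\|\cdot\|_p\ge\|\cdot\|_2$ combined with orthogonality $\|\sum_k f_k\|_2^2=\sum_k\|f_k\|_2^2$.

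For the upper bound, I would invoke the Burkholder--Davis--Gundy inequality applied to the martingale $(S_k)_{k\le n}$ to obtain
\[
\|S_n\|_p\le C_p\,\bigl\|\bigl(\sum_k f_k^2\bigr)^{1/2}\bigr\|_p = C_p\,\bigl\|\sum_k f_k^2\bigr\|_{p/2}^{1/2}.
\]
Splitting $\sum_k f_k^2=\sum_k(f_k^2-\mathbb{E}f_k^2)+\sum_k\mathbb{E}f_k^2$ extracts $\sum_k\|f_k\|_2^2$ cleanly from the deterministic piece. The random piece $\sum_k g_k$, with $g_k=f_k^2-\mathbb{E}f_k^2$, is again a sum of independent mean-zero variables at exponent $q=p/2$. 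In the base case $2<p\le 4$ (so $q\in(1,2]$), the von Bahr--Esseen inequality yields $\|\sum_k g_k\|_q^q\le 2\sum_k\|g_k\|_q^q\le C\sum_k\|f_k\|_p^p$, which closes the estimate. For $p>4$ I would apply the theorem inductively at exponent $q>2$: the inductive $L_p$-term contributes $(\sum_k\|g_k\|_q^q)^{1/q}\le C(\sum_k\|f_k\|_p^p)^{2/p}$, while the inductive $L_2$-term contributes $(\sum_k\|g_k\|_2^2)^{1/2}\le(\sum_k\|f_k\|_4^4)^{1/2}$, which must be absorbed.

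The main obstacle is precisely this absorption step. My plan is to interpolate $\|f_k\|_4\le\|f_k\|_p^\theta\|f_k\|_2^{1-\theta}$ with $\theta=\frac{p}{2(p-2)}$, then apply H\"older's inequality with conjugate exponents $\frac{p-2}{2}$ and $\frac{p-2}{p-4}$ to obtain $(\sum_k\|f_k\|_4^4)^{1/2}\le A^{p/(p-2)}B^{(p-4)/(p-2)}$, where $A=(\sum_k\|f_k\|_p^p)^{1/p}$ and $B=(\sum_k\|f_k\|_2^2)^{1/2}$. Since $p/(p-2)+(p-4)/(p-2)=2$, weighted AM--GM dominates this product by $A^2+B^2$. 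Consequently $\|\sum_k f_k^2\|_{p/2}\le C_p(A^2+B^2)$, and taking square roots yields $\|S_n\|_p\le C_p(A+B)$, as required. Careful tracking of the Burkholder and von Bahr--Esseen constants across the $O(\log_2 p)$ levels of the induction then consolidates everything into a single constant $K_p$ depending only on $p$.
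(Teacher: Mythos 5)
The paper does not actually prove Theorem~\ref{theor: Rosenthal}; it states the result with a citation to Rosenthal's original 1970 paper \cite{R}, and the surrounding discussion explains how Rosenthal's inequality fits into the Johnson--Schechtman/Kruglov framework that the paper then develops in far greater generality. So there is no internal proof to compare against line by line.

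That said, your blind argument is essentially a correct direct proof of Rosenthal's inequality, and it is worth recording how it differs from the route suggested by the paper. Your lower bound goes through symmetrization plus the $p\geq 2$ direction of Khintchine (which for $p\geq 2$ is just Jensen, so the constant is $1$) followed by the pointwise embedding $\ell_2\hookrightarrow\ell_p$, and the $L_2$-piece is orthogonality. That correctly produces the factor $1/2$ from the two-sided symmetrization loss. Your upper bound uses Burkholder--Davis--Gundy, centers the square function, and then closes the loop by bootstrapping: von Bahr--Esseen handles $2<p\leq 4$, and for $p>4$ you induct at exponent $p/2$ and absorb the resulting $\bigl(\sum_k\|f_k\|_4^4\bigr)^{1/2}$ term via the interpolation $\|f_k\|_4\leq\|f_k\|_p^{\theta}\|f_k\|_2^{1-\theta}$, H\"older in $k$, and weighted AM--GM. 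I checked the exponent bookkeeping ($\theta=\tfrac{p}{2(p-2)}$, H\"older conjugates $\tfrac{p-2}{2}$ and $\tfrac{p-2}{p-4}$, and $\tfrac{p}{p-2}+\tfrac{p-4}{p-2}=2$) and it is consistent. This is a legitimate self-contained proof in the spirit of the Burkholder/Junge--Xu martingale-square-function approach. By contrast, the route the paper favors treats Rosenthal's inequality as the special case $E=L_p$ of the Johnson--Schechtman disjointification inequality $\|\sum_k f_k\|_E\approx\|\bigoplus_k f_k\|_{Z_E^2}$, which in turn follows from boundedness of the Kruglov operator $K_{\mathrm{class}}$ on $E$. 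That approach is less elementary but buys you the full symmetric-space generality (and, as the rest of the paper shows, the $\Phi$-moment and free analogues) in one shot, with no case-by-case induction on exponents. Your proof buys explicit control of how the constant $K_p$ grows (through $O(\log p)$ iterations of BDG and absorption), which the operator approach does not make transparent.

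One small caution on the write-up if you expand it: when you invoke Khintchine to claim $\mathbb{E}\bigl|\sum_k f_k\bigr|^p\geq c_p\,\mathbb{E}\bigl(\sum_k f_k^2\bigr)^{p/2}$ after replacing $f_k$ by $\varepsilon_k f_k$, make explicit that for $p\geq2$ one can take $c_p=1$ (Jensen), since otherwise the stated constant $\tfrac12$ in the theorem would not be recovered from your symmetrization step alone.
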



These two seemingly disconnected results, are in fact deeply connected. To explain better this connection, we need to refer to works of several other mathematicians. 
For convenience, let us introduce the notation $$\bigoplus _{k=1}^nf_k:=\sum_{k=1}^nf_k(\cdot-k+1)\chi_{[k-1,k)}$$
for disjoint sum of random variables $(f_k)$ on $(0,1)$, which is a Lebesgue measurable function on $(0,\infty)$. Then Theorem \ref{theor: Rosenthal} can be restated as
\begin{equation}\label{rosenthal}
\Big\|\sum_{k=1}^nf_k\Big\|_p\approx_{K_p} \Big \|\bigoplus _{k=1}^nf_k\Big\|_{L_p\cap L_2},\quad 2<p<\infty.
\end{equation}
Here and in what follows, $X\approx_C Y$ means that $X\leq CY$ and $Y\leq CX.$ This form of Theorem \ref{theor: Rosenthal} was extended by Carothers and Dilworth \cite{CD1,CD2}  to the case of Lorentz spaces $L_{p,q}$ 
and in 1989, in the setting of symmetric function spaces, Johnson and Schechtman \cite{JS} established a far reaching generalisation of Rosenthal's result for Banach and quasi-Banach symmetric function spaces  (see next section for precise definitions of these notions and subsequent terms and symbols). Let $E$ be a symmetric space on $[0,1]$ and  let the set $Z_E^p$ consists of all measurable functions on $(0,\infty)$ for which 
$$\|f\|_{Z_E^p}=\|\mu(f)\chi_{[0,1]}\|_E+\|\mu(f)\chi_{(1,\infty)}\|_p<\infty,$$
where $\mu(f)$ is a decreasing rearrangement of $|f|$.
It is established in \cite{JS}  that 
\begin{equation}\label{JS}
\Big\|\sum_{k=1}^nf_k\Big\|_E\approx_{C_E} \Big \|\bigoplus _{k=1}^nf_k\Big\|_{Z_E^2},\mbox{ respectively, }\Big\|\sum_{k=1}^nf_k\Big\|_E\approx_{C_E} \Big \|\bigoplus _{k=1}^nf_k\Big\|_{Z_E^1}\quad\Big)
\end{equation} for any sequence $(f_k)$ of independent mean zero (respectively, positive) random variables whenever $L_p\subset E$ for some $p<\infty.$ 
The connections between Johnson-Schechtman form \eqref{JS} of Rosenthal Theorem \ref{theor: Rosenthal} and Kruglov Theorem \ref{Kruglov}  was firstly noted by Braverman \cite{B}. However, in the setting of Banach and quasi-Banach symmetric function spaces, for detailed discussion of these connections we refer the reader to \cite{Astashkin2005, AS1, AS3, AS4, ASW, pacific}. The main tool used in the latter papers, the so-called Kruglov operator $K_{\rm class}$, allowed to substantially extend Johnson-Schechtman inequalities  \eqref{JS}. In particular, it follows from \cite[Theorems 3.5 and 6.1]{Astashkin2005} and \cite[Theorem 1]{AS4} that \eqref{JS} holds if and only if the operator $K_{\rm class}$ is bounded on $E.$ The latter condition is far less restrictive than the assumption that $X \supset L_p$ for some $p<\infty$ (see \cite{Astashkin2005}). To see (finally!) the connection between Theorems  \ref{theor: Rosenthal} and \ref{Kruglov}, it remains to observe that the implication $f\in E\Longrightarrow \pi(f)\in E$ holds if and only if the operator $K_{\rm class}$ acts boundedly on (symmetric Banach function space) $E$ \cite[p.1990]{AS4}.

Since the Kruglov operator $K_{\rm class}$ is bounded on $L_p$ for $1\leq p<\infty,$ (see e.g. precise estimates in \cite[Corollary 4]{AS4}), we may view  \eqref{rosenthal}  as 
an assertion implied by the second part of Theorem \ref{Kruglov} (obviously, $\Phi(t)=|t|^p$, $p\ge 1$ satisfies the condition stated in that part).

We are now in a position to state the first main question studied in this article. 

{\sl For which Orlicz functions $\Phi$, the $\Phi$-moment versions of inequalities \eqref{JS} remain valid?} 

This question actually returns to the very original setting of Kruglov Theorem. It should be also emphasized that the answer to this question is dramatically different from its counterpart for the Banach space setting studied in papers cited above. In the present paper, we answer this question in full generality. (Throughout this article, $C_\Phi$ always denotes a constant depending only on $\Phi,$ which may be different in different places).


\begin{theorem}\label{main results 1}
Suppose that $\Phi$ is an Orlicz function satisfying $\Delta_2$-condition. Let $\{f_k\}_{k=1}^n \subset L_{\Phi}[0,1],$ $n\in\mathbb{N},$ be a sequence of independent random variables.
\begin{enumerate}[{\rm (i)}]
\item If $f_k,$ $1\leq k\leq n,$ are positive, then
\begin{equation}\label{mainca}
\mathbb{E}\Big(\Phi\big(\sum_{k=1}^n {f_k}\big)\Big)\approx_{C_\Phi}\mathbb{E}\Big(\Phi \big(\mu(f)\chi_{(0,1)} \big)\Big)+\Phi\Big(\|f\|_1\Big),\quad f=\bigoplus_{k=1}^n f_k.
\end{equation}
\item If $f_k,$ $1\leq k\leq n,$ are symmetrically distributed, then
\begin{equation}\label{maincb}
\mathbb{E}\Big(\Phi\big(\sum_{k=1}^n {f_k}\big)\Big)\approx_{C_\Phi}\mathbb{E}\Big(\Phi \big(\mu(f)\chi_{(0,1)} \big)\Big)+\Phi\Big(\|f\|_{L_1+L_2}\Big),\quad f=\bigoplus_{k=1}^n f_k.
\end{equation}
\end{enumerate}
\end{theorem}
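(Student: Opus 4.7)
The plan is to prove each part by establishing the lower and upper bounds separately, with the lower bound being the easy half and the upper bound being the main work. The $\Delta_2$ condition will be used throughout via its two standard consequences: quasi-subadditivity $\Phi(s+t)\leq C_\Phi(\Phi(s)+\Phi(t))$ and polynomial growth $\Phi(\lambda t)\leq C_\Phi\lambda^p\Phi(t)$ for $\lambda\geq 1$ and some $p=p(\Phi)<\infty$. Together these let one freely split sums inside $\Phi$ and absorb any absolute constants that appear along the way.

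For the lower bound in (i), the convexity of $\Phi$ with $\Phi(0)=0$ makes $t\mapsto\Phi(t)/t$ nondecreasing on $(0,\infty)$ and therefore $\Phi$ superadditive on $[0,\infty)$, so pointwise $\Phi(\sum_k f_k)\geq\sum_k\Phi(f_k)$. Taking expectations and identifying the disjoint sum one obtains
\[
\mathbb{E}\,\Phi\Big(\sum_k f_k\Big)\geq\sum_k\mathbb{E}\,\Phi(f_k)=\int_0^n\Phi(\mu(f)(s))\,ds\geq\mathbb{E}\,\Phi\big(\mu(f)\chi_{(0,1)}\big),
\]
while Jensen's inequality supplies the remaining piece $\Phi(\|f\|_1)=\Phi(\mathbb{E}\sum_k f_k)\leq\mathbb{E}\,\Phi(\sum_k f_k)$.

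The upper bound in (i) is the substantive step. The plan is to truncate each $f_k$ at the threshold $a:=\mu(f)(1)$, writing $f_k=f_k''+f_k'$ with $f_k''=(f_k-a)_+$ and $f_k'=f_k\wedge a$. Quasi-subadditivity then reduces the problem to separate estimates for $\mathbb{E}\,\Phi(\sum f_k'')$ and $\mathbb{E}\,\Phi(\sum f_k')$. The \emph{peak} variables $f_k''$ are supported on sets of total measure $\sum_k P(f_k>a)\leq 1$, so a Poissonisation / Kruglov-operator argument in the spirit of Astashkin--Sukochev, adapted from the norm setting to the modular setting, should yield $\mathbb{E}\,\Phi(\sum f_k'')\leq C_\Phi\,\mathbb{E}\,\Phi(\mu(f)\chi_{(0,1)})$. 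The \emph{body} variables $f_k'$ are uniformly bounded by $a$ with $\mathbb{E}\sum f_k'\leq\|f\|_1$; a Bennett--Bernstein tail bound combined with the polynomial growth of $\Phi$ should give $\mathbb{E}\,\Phi(\sum f_k')\leq C_\Phi\,\Phi(\|f\|_1)$.

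Part (ii) is reduced to part (i) by standard symmetrisation: adding independent symmetric copies costs only a $\Delta_2$-absorbable factor, after which a contraction-type argument allows one to estimate the symmetric sum by the corresponding positive expression. The mixed-norm quantity $\|f\|_{L_1+L_2}$ appears in place of $\|f\|_1$ because for mean-zero sums the relevant centering quantity is the variance rather than the mean, and a Hoffmann--J\o rgensen type decomposition of $f$ into an $L_1$ piece and an $L_2$ piece produces exactly $\|f\|_{L_1+L_2}$. I expect the main obstacle to be the body estimate: it is in essence a quantitative reverse-Jensen inequality, valid only because the truncated summands are uniformly bounded and $\Phi$ has polynomial growth, and transferring the Kruglov-operator machinery from the normed Orlicz setting to the modular $\Phi$-moment setting is where the real work sits.
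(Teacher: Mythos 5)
Your treatment of part (i) is sound and follows the same overall scheme as the paper: truncate each $f_k$ at the threshold $\mu(1,f)$, control the peaks through a Kruglov-operator argument, and control the bodies as a sum of uniformly bounded positive variables. Two variations are worth noting. For the bodies the paper does not use Bennett--Bernstein but a majorization argument (Lemma \ref{norm}): each truncated $f_k$ is dominated in the Hardy--Littlewood sense by a constant multiple of an indicator, and Lemma \ref{phimoment} (proved via the Kruglov operator applied to disjoint indicators) then yields the modular bound directly. Your concentration route also works --- since $a=\mu(1,f)\leq\|f\|_1$ and $\Phi$ has polynomial growth, the exponential tail integrates to $C_\Phi\Phi(\|f\|_1)$ --- but it demands more computation. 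On the other hand, your lower bound is genuinely simpler than the paper's: superadditivity of a convex $\Phi$ with $\Phi(0)=0$ gives $\Phi(\sum_k f_k)\geq\sum_k\Phi(f_k)$ pointwise and hence $\mathbb{E}\Phi(\sum_k f_k)\geq\mathbb{E}\Phi(\mu(f)\chi_{(0,1)})$ at once, whereas the paper routes through the disjointification estimate $\mu(\bigoplus_k f_{k,1})\leq\sigma_2\mu(\sum_k f_{k,1})$ of Lemma \ref{wbjohn lemma}.

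Part (ii), however, has a real gap. The claim that it ``is reduced to part (i) by standard symmetrisation, after which a contraction-type argument allows one to estimate the symmetric sum by the corresponding positive expression'' cannot be right: the $f_k$ are already symmetric, and estimating $\Phi(|\sum_k f_k|)$ by $\Phi(\sum_k|f_k|)$ and applying (i) would return $\Phi(\|f\|_1)$ on the right-hand side, not the smaller quantity $\Phi(\|f\|_{L_1+L_2})$. The improvement from $L_1$ to $L_1+L_2$ is exactly the cancellation coming from mean-zero summands, and that cancellation is destroyed the moment you pass to absolute values. You gesture at this (``the relevant centering quantity is the variance'') but the sketch is then internally inconsistent: one cannot both reduce to the positive case and retain the $L_2$ gain. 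The fix is to keep the bodies $f_{k,2}=f_k\chi_{\{|f_k|\leq\mu(1,f)\}}$ symmetric --- hence mean zero --- and prove directly that $\mathbb{E}\Phi(|\sum_k f_{k,2}|)\leq C_\Phi\Phi(\|\bigoplus_k f_{k,2}\|_{L_2\cap L_\infty})$, which the paper does in Lemma \ref{like3.2} by realising $\sum_k f_{k,2}$ as $T(\bigoplus_k f_{k,2})$ for a Rademacher-tensorisation operator $T$ bounded from $L_2\cap L_\infty$ into the Marcinkiewicz space built from $K_{\rm class}1$. (A mean-zero Bernstein argument in the spirit of your part (i) could also serve here, but it must act on the symmetric bodies, not on their absolute values.) Finally, your sketch omits the lower bound for (ii), which in the paper comes from the L\'evy maximal inequality for the $\mathbb{E}\Phi(\mu(f)\chi_{(0,1)})$ piece and from the $L_1$ Johnson--Schechtman norm inequality for the $\Phi(\|f\|_{L_1+L_2})$ piece; neither of these is a consequence of (i).
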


We also prove the following maximal inequalities, which extend Lemma 1 in \cite{MS}. 
\begin{theorem} \label{CM}
Suppose that $\Phi$ is an Orlicz function satisfying $\Delta_2$-condition. If $\{f_k\}_{k=1}^n \subset L_{\Phi}[0,1],$ $n\in\mathbb{N}$ is a sequence of positive independent random variables, then
\begin{equation}
\mathbb{E}\Big(\Phi\big(\max_{1\leq k\leq n}f_k\big)\Big)\approx_{C_\Phi}\mathbb{E}\Big(\Phi \big(\mu(f)\chi_{(0,1)} \big)\Big),\quad f=\bigoplus_{k=1}^n f_k.
\end{equation}
\end{theorem}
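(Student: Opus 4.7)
The strategy is to compare the distribution functions of the two sides pointwise and then invoke the layer-cake representation of the $\Phi$-moment. Writing $p_k(\lambda):=\mathbb{P}(f_k>\lambda)$, positivity and independence give
$$\mathbb{P}\big(\max_{1\le k\le n}f_k>\lambda\big)=1-\prod_{k=1}^n\big(1-p_k(\lambda)\big),$$
while the disjoint sum $f=\bigoplus_{k=1}^n f_k$ on $(0,\infty)$ satisfies $m\{f>\lambda\}=\sum_{k=1}^n p_k(\lambda)$, and hence, since $\mu(f)$ is decreasing on $(0,\infty)$,
$$d_{\mu(f)\chi_{(0,1)}}(\lambda)=\min\Big(1,\sum_{k=1}^n p_k(\lambda)\Big).$$

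The key step is the elementary two-sided inequality
$$(1-e^{-1})\,\min\Big(1,\sum_k p_k\Big)\le 1-\prod_k (1-p_k)\le \min\Big(1,\sum_k p_k\Big), \qquad p_k\in[0,1],$$
where the upper bound is a union bound and the lower bound follows by combining $\prod_k(1-p_k)\le e^{-\sum_k p_k}$ with the convexity estimate $1-e^{-x}\ge (1-e^{-1})x$ on $[0,1]$ (treating separately the cases $\sum p_k\le 1$ and $\sum p_k>1$). Inserting this into the two displays above yields, with universal constants,
$$\mathbb{P}\big(\max_{1\le k\le n}f_k>\lambda\big)\approx d_{\mu(f)\chi_{(0,1)}}(\lambda),\qquad \lambda>0.$$

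I would then conclude by applying the layer-cake identity $\mathbb{E}(\Phi(X))=\int_0^\infty\varphi(\lambda)\,\mathbb{P}(X>\lambda)\,d\lambda$, valid for positive $X$ and any Orlicz function $\Phi$ with right-derivative $\varphi$, once to $X=\max_k f_k$ and once to $X=\mu(f)\chi_{(0,1)}$; the pointwise equivalence of distribution functions transfers directly to the $\Phi$-moments. The only real obstacle is the elementary two-sided estimate for $1-\prod_k(1-p_k)$, which is standard. It is worth noting that the $\Delta_2$-assumption is not actually used in this theorem; it is inherited from the companion result Theorem~\ref{main results 1}, and in fact the constant $C_\Phi$ in Theorem~\ref{CM} can be taken to be a universal numerical constant.
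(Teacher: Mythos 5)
Your proof is correct, and it takes a genuinely different and more elementary route than the paper's. The paper truncates each $f_k$ at $\mu(1,f)$ into $f_{k,1}+f_{k,2}$, applies the Kruglov-operator machinery (Lemma~\ref{phimoment}) to handle $\max_k f_{k,1}$, bounds $\max_k f_{k,2}$ by $\mu(1,f)$, and then uses Lemma~\ref{wbjohn lemma} for the reverse inequality; the forward inequality therefore depends on the $\Delta_2$-condition through \eqref{ineq13} and Lemma~\ref{phik}. Your argument instead compares the two distribution functions directly: independence gives $\mathbb{P}(\max_k f_k>\lambda)=1-\prod_k(1-p_k(\lambda))$, while $d_{\mu(f)\chi_{(0,1)}}(\lambda)=\min\{1,\sum_k p_k(\lambda)\}$, and the elementary two-sided bound
$$(1-e^{-1})\min\Big\{1,\sum_k p_k\Big\}\le 1-\prod_k(1-p_k)\le\min\Big\{1,\sum_k p_k\Big\}$$
(union bound on the right, $1-x\le e^{-x}$ plus concavity of $1-e^{-x}$ on the left) then transfers through the layer-cake identity $\mathbb{E}(\Phi(X))=\int_0^\infty\varphi(\lambda)\,\mathbb{P}(X>\lambda)\,d\lambda$, which holds for any Orlicz $\Phi$ because $\varphi\ge 0$. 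This is shorter, avoids Kruglov-operator techniques and truncation entirely, and — as you correctly observe — shows that the $\Delta_2$-hypothesis is superfluous here: the equivalence holds with the universal constant $(1-e^{-1})^{-1}$, whereas the paper's proof yields only a $\Phi$-dependent constant in the forward direction. (The $\Delta_2$-condition is, of course, genuinely needed for the companion sum estimates in Theorem~\ref{main results 1}, where no pointwise distributional comparison is available; your method exploits a special feature of the maximum that has no analogue for sums.) One tiny bookkeeping point worth spelling out: the measure spaces differ — $\max_k f_k$ lives on $(0,1)$ while $\mu(f)\chi_{(0,1)}$ lives on $(0,\infty)$ — but since $\Phi(0)=0$ the layer-cake formula applies on each with its own underlying measure, and the pointwise comparison of the two distribution functions is exactly what is needed.
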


Now we turn to the second main theme of this paper, which concerns analogues of the classical results discussed above in the setting of free probability. Recently, the operator approach of \cite{AS1} was extended into the realm of (noncommutative) free probability theory in \cite{SZ}. By using a free Kruglov operator $K_{\rm free},$ a version of Johnson-Schechtman inequalities in the setting of free probability theory was obtained. We briefly recall the main results from \cite{JS}. Let $\M$ be a finite von Neumann algebra equipped with a faithful normal tracial state $\tau$. Let $E(\M, \tau)$ be a symmetric Banach operator space equipped with a Fatou norm. Then\footnote{In the right hand side, we consider the norms in the symmetric operator space $Z_E^2$ associated with the algebra $\mathcal{M}\otimes l_{\infty}$ and the trace $\tau\otimes\#,$ where $\#$ is the counting measure on $\mathbb{Z}_+.$ The notation $\bigoplus_{k=1}^nx_k$ is a shorthand for disjoint sum of the operators $x_k,$ $1\leq k\leq n,$ (e.g.  $\bigoplus_{k=1}^nx_k$= $\sum_{k=1}^nx_k\otimes e_k$, where $e_k$ is the standard basis of $\ell_\infty$).}
\begin{equation}\label{SZ}
\Big\|\sum_{k=1}^nx_k\Big\|_E\approx\Big\|\bigoplus_{k=1}^nx_k\Big\|_{Z_E^2},\mbox{ respectively, }\Big\|\sum_{k=1}^nx_k\Big\|_E\approx\Big\|\bigoplus_{k=1}^nx_k\Big\|_{Z_E^1}
\end{equation}
for every sequence $(x_k)$ of freely independent symmetrically distributed (respectively, positive) random variables from $E(\M,\tau);$ see section 2 for the notations.  In the special case $E=L_p$ for $1\leq p\leq \infty$, this result was proved by Junge, Parcet and Xu (see Theorem A in \cite{JPX}). For $p=\infty$, it belongs to Voiculescu \cite{V1}. The second aim of this article is to answer the question

{\sl For which Orlicz functions $\Phi$, the $\Phi$-moment versions of inequalities \eqref{SZ} remain valid?} 

and to present a noncommutative $\Phi$-moment version of Johnson-Schechtman inequalities, namely, a free version of Theorem \ref{main results 1}.

This question is fully answered in the following theorem. The Banach space $L_1+L_2$ is the standard sum of Banach spaces $L_1$ and $L_2$, which is given by the set $\{h=f+g|\ f\in L_1,\ g\in L_2\}$ equipped with the norm $\|h\|_{L_1+L_2}=\inf \|f\|_{L_1}+\|g\|_{L_2}$, where the infimum is taken over all possible decompositions $h=f+g$, $f\in L_1$, $g\in L_2$.

\begin{theorem} \label{main results 2} Let $(\mathcal{M},\tau)$ be a noncommutative probability space. Suppose that $\Phi$ is an Orlicz function satisfying $\Delta_2$-condition. Let $\{x_k\}_{k=1}^n,$ $n\in\mathbb{N},$ be a sequence of freely independent random variables.
\begin{enumerate}[{\rm (i)}]
\item\label{mainfa} If $x_k,$ $1\leq k\leq n,$ are positive, then
\begin{equation}
\tau(\Phi(\sum_{k=1}^nx_k))\approx_{C_\Phi}\mathbb{E}(\Phi(\mu(X)\chi_{(0,1)}))+\Phi(\|X\|_1),\quad X=\bigoplus_{k=1}^n x_k.
\end{equation}
\item\label{mainfb} If $x_k,$ $1\leq k\leq n,$ are symmetrically distributed, then
\begin{equation}
\tau(\Phi(\sum_{k=1}^nx_k))\approx_{C_\Phi}\mathbb{E}(\Phi(\mu(X)\chi_{(0,1)}))+\Phi(\|X\|_{L_1+L_2}),\quad X=\bigoplus_{k=1}^n x_k.
\end{equation}
\end{enumerate}
\end{theorem}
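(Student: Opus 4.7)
My plan is to run the free analogue of the argument for Theorem~\ref{main results 1}, replacing the classical Kruglov operator $K_{\rm class}$ by the free Kruglov operator $K_{\rm free}$ of \cite{SZ}, and the classical Johnson-Schechtman inequalities \eqref{JS} by their free counterparts \eqref{SZ}. The strategy is: use \eqref{SZ} in the Orlicz space $L_\Phi(\M,\tau)$ to reduce the problem to a norm estimate on the disjoint sum $X=\bigoplus_{k=1}^n x_k$, then convert this norm estimate into a modular ($\Phi$-moment) estimate using the $\Delta_2$-condition.

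First, under the $\Delta_2$-condition, $L_\Phi \supset L_p$ for some $p<\infty$, so the free analogue of \cite[Corollary 4]{AS4} gives boundedness of $K_{\rm free}$ on $L_\Phi(\M,\tau)$. Applying the free Johnson-Schechtman inequality \eqref{SZ} with $E=L_\Phi$ then yields
\begin{equation*}
\Big\|\sum_{k=1}^n x_k\Big\|_{L_\Phi(\M,\tau)} \approx_{C_\Phi} \|X\|_{Z^q_{L_\Phi}}, \qquad X=\bigoplus_{k=1}^n x_k,
\end{equation*}
with $q=1$ for \eqref{mainfa} and $q=2$ for \eqref{mainfb}. By definition of $Z^q_{L_\Phi}$, the right-hand side splits into the $L_\Phi$-norm of $\mu(X)\chi_{(0,1)}$ plus the $L_q$-norm of the tail $\mu(X)\chi_{(1,\infty)}$, refined, in the symmetric case, to the $L_1+L_2$-norm by arguing as in the symmetric part of Theorem~\ref{main results 1}.

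The remaining task, which I expect to be the main obstacle, is to convert this norm equivalence into the claimed modular equivalence. Under $\Delta_2$, norm and modular control each other with uniform $\Phi$-dependent constants, but the modular $\tau\circ\Phi$ does not rescale like a norm, so one cannot simply divide by a scalar; instead one must truncate $X$ at a level comparable to $\|\mu(X)\chi_{(1,\infty)}\|_{L_q}$ (respectively $\|X\|_{L_1+L_2}$) and work piece by piece. The ``small'' part $\mu(X)\chi_{(0,1)}$ directly generates $\mathbb{E}\Phi(\mu(X)\chi_{(0,1)})$, whereas on the tail $(1,\infty)$ Jensen's inequality combined with $\Delta_2$ produces the term $\Phi(\|X\|_1)$ or $\Phi(\|X\|_{L_1+L_2})$. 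The free-probabilistic ingredient (boundedness of $K_{\rm free}$ on $L_\Phi$) should, by contrast, go through essentially as in the commutative case thanks to the machinery already set up in \cite{SZ}, so the delicate work will be concentrated in the two-scale modular-versus-norm dictionary rather than in the free-probability input.
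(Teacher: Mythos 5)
Your high-level strategy (mimic the commutative proof, replace $K_{\rm class}$ by $K_{\rm free}$, decompose into head and tail) points in the right direction, and you correctly identify the norm-versus-modular dictionary as the main obstruction. However, the concrete route you propose has a genuine gap precisely at that point, and it is worth being explicit about why.

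The difficulty is that you begin from the symmetric-space norm inequality \eqref{SZ}: $\|\sum x_k\|_{L_{\Phi_t}}\approx\|X\|_{Z^q_{L_{\Phi_t}}}$ (with, say, $\Phi_t=t^{-1}\Phi$ to get a uniform constant in $t$). A rescaling in $t$ does convert the \emph{head} norm $\|\mu(X)\chi_{(0,1)}\|_{L_{\Phi_t}}$ into the modular $\mathbb{E}\Phi(\mu(X)\chi_{(0,1)})$, and this is exactly how the paper proves the lower bound $\mathbb{E}\Phi(\mu(X)\chi_{(0,1)})\lesssim\tau(\Phi(\sum x_k))$ in the symmetric case (Proposition \ref{S}, via \cite[Proposition 44]{SZ}). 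But the \emph{tail} contribution to $Z^q_{L_{\Phi_t}}$ is $\|\mu(X)\chi_{(1,\infty)}\|_q$, which is a raw scalar that does not rescale with $t$; there is no choice of $t$ that simultaneously normalizes the head norm and turns this $L_q$-number into $\Phi(\|X\|_1)$ (or $\Phi(\|X\|_{L_1+L_2})$). So the upper bound $\tau(\Phi(\sum x_k))\lesssim\mathbb{E}\Phi(\mu(X)\chi_{(0,1)})+\Phi(\|X\|_1)$ does not follow from the $Z^q_E$-norm equivalence by any scaling argument, and this is the step your proposal leaves unresolved (your remark ``Jensen's inequality combined with $\Delta_2$ produces the term $\Phi(\|X\|_1)$'' is also misplaced: Jensen gives $\Phi(\|\sum x_k\|_1)\leq\tau(\Phi(\sum x_k))$, which is the lower bound, not the tail piece of the upper bound).

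The paper closes this gap by working at the level of operators and distributions rather than of symmetric norms. Each $x_k$ is split spectrally at the threshold $\mu(1,X)$: $A_{1,k}=x_ke_{(\mu(1,X),\infty)}(x_k)$ and $A_{2,k}=x_ke_{(0,\mu(1,X))}(x_k)$, so that $\mu(\bigoplus A_{1,k})=\mu(X)\chi_{(0,1)}$ and $\mu(\bigoplus A_{2,k})=\mu(\mu(X)\chi_{(1,\infty)})$. For the head, Lemma \ref{head lemma} (built from $K_{\rm free}$) gives the \emph{pointwise} two-sided estimate $\frac1{10}\sigma_{1/20}\mu(\sum A_{1,k})\leq\mu(\bigoplus A_{1,k})\leq10\sigma_{20}\mu(\sum A_{1,k})$, which, unlike a norm estimate, converts directly to a modular estimate via \eqref{traceequality}. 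For the tail, the crucial fact is not the $L_q$-norm control you propose, but the \emph{uniform norm} bound from \cite[Corollary 33]{SZ}: $\|\sum A_{2,k}\|_\infty\leq 64\|\bigoplus A_{2,k}\|_{L_1\cap L_\infty}$, which yields $\tau(\Phi(\sum A_{2,k}))\leq\Phi(\|\sum A_{2,k}\|_\infty)\lesssim\Phi(\|X\|_1)$ (or, in the symmetric case after a Jordan decomposition and Holmstedt's formula, $\Phi(\|X\|_{L_1+L_2})$). In short, the free-probability input you need is not the $Z^q_E$ Johnson--Schechtman inequality per se but the pointwise singular-value estimate of Lemma \ref{head lemma} and the $L_\infty$-bound of \cite[Corollary 33]{SZ}; without these, the modular upper bound cannot be extracted.
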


For simplicity, the following statement of a noncommutative counterpart of Theorem \ref{CM} does not refer to the notion of noncommutative maximal operator, which was introduced by Pisier in \cite{P} (see also Junge's paper \cite{J}). For recent studies of $\Phi$-moments of noncommutative maximal operator we refer to \cite{BCO} (see also \cite{BC, BCLJ}). 

\begin{theorem}\label{Mright} Let $\Phi$ be an Orlicz function satisfying $\Delta_2$-condition. Let $(x_k)_{k=1}^n\subset S(\mathcal{M},\tau)$ be a sequence of positive freely independent random variables. We have\footnote{In the notations of Pisier \cite{P} and Junge \cite{J}, the left hand side is written as $\tau(\Phi(\sup^+_{1\leq k\leq n}x_k)).$ We chose not to use this notation because the object $\sup^+_{1\leq k\leq n}x_k$ does not exist {\it per se}.}
$$\inf\big\{\tau(\Phi(a)):\ a\geq x_k,\ 1\leq k\leq n\big\}\approx_{C_{\Phi}} \mathbb{E}(\Phi(\mu(X)\chi_{(0,1)})),\quad X=\bigoplus_{k=1}^nx_k.$$
\end{theorem}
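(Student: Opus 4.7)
The plan is to prove the two directions separately: the upper bound via an explicit majorant constructed from joins of spectral projections, and the lower bound via a spectral-distribution estimate in which free independence enters essentially.

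For the upper bound, I would set
$$a_*:=\int_0^{\infty}\Big(\bigvee_{k=1}^{n}\chi_{(s,\infty)}(x_k)\Big)\,ds.$$
Since $\bigvee_k\chi_{(s,\infty)}(x_k)\geq\chi_{(s,\infty)}(x_j)$ for every $j$, integration yields $a_*\geq x_j$, so $a_*$ is an admissible majorant. The family $s\mapsto\bigvee_{k=1}^n\chi_{(s,\infty)}(x_k)$ is a right-continuous nonincreasing family of projections, hence coincides with the spectral resolution of $a_*$; that is, $\chi_{(s,\infty)}(a_*)=\bigvee_{k=1}^n\chi_{(s,\infty)}(x_k)$. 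Free independence of the $x_k$ transfers to their spectral projections, and induction on $n$ based on the two-projection identity $\tau(p\vee q)=\min(1,\tau(p)+\tau(q))$ for freely independent projections yields
$$\tau(\chi_{(s,\infty)}(a_*))=\min\Big(1,\sum_{k=1}^n\tau(\chi_{(s,\infty)}(x_k))\Big)=\min(1,\lambda_X(s)),$$
where $\lambda_X(s):=(\tau\otimes\#)(\chi_{(s,\infty)}(X))$. A layer-cake computation then produces
$$\tau(\Phi(a_*))=\int_0^\infty\Phi'(s)\,\min(1,\lambda_X(s))\,ds=\mathbb{E}(\Phi(\mu(X)\chi_{(0,1)})),$$
and the infimum over majorants is at most $\tau(\Phi(a_*))$, bounding the left-hand side of Theorem \ref{Mright} by the right-hand side.

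For the lower bound, fix $a\geq x_k$ for every $k$. The strategy is to reduce matters to the pointwise spectral estimate
$$\tau(\chi_{(Cs,\infty)}(a))\geq c\cdot\min(1,\lambda_X(s)),\qquad s>0,$$
for some absolute constants $c,C>0$, since a layer-cake integration against $\Phi'$ together with the $\Delta_2$-condition (to absorb the dilation by $C$) then produces $\tau(\Phi(a))\geq C_\Phi^{-1}\,\mathbb{E}(\Phi(\mu(X)\chi_{(0,1)}))$. The starting point is $a\geq x_k\geq s\,\chi_{(s,\infty)}(x_k)$, so $a$ dominates $s$ times each freely independent projection $e_k(s):=\chi_{(s,\infty)}(x_k)$; consequently the support projection of $a$ dominates $\bigvee_k e_k(s)$, whose trace is $\min(1,\lambda_X(s))$ by the join identity used above. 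What is left is to prevent $a$ from placing too much spectral mass in $(0,s/C)$.

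The main obstacle lies precisely at this final step. The elementary dominance $\mu_t(a)\geq\mu_t(x_k)$ supplies only $\tau(\chi_{(s,\infty)}(a))\geq\max_k\tau(e_k(s))$, i.e.\ it extracts the maximum of the $\tau(e_k(s))$ rather than (a constant multiple of) their sum $\min(1,\sum_k\tau(e_k(s)))$. Converting the maximum into this sum is exactly the quantitative input that free independence must provide. I would close the gap either by a direct joint-distribution computation for the freely independent spectral projections $\{e_k(s)\}_{k=1}^n$ in the spirit of Voiculescu's free-convolution formulas, or by leveraging the free Kruglov operator $K_{\rm free}$ of \cite{SZ} to transfer the classical maximal inequality (Theorem \ref{CM}) to the free setting, in analogy with how \eqref{SZ} is derived from the classical \eqref{JS}.
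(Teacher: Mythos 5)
Your upper-bound construction is correct and, in fact, sharper than the paper's: the operator $a_*=\int_0^\infty \bigl(\bigvee_{k=1}^n\chi_{(s,\infty)}(x_k)\bigr)ds$ does dominate each $x_k$, the family $p(s)=\bigvee_k e_k(s)$ is decreasing, and right-continuity follows from normality of $\tau$ together with the trace identity $\tau(p(s))=\min\bigl(1,\sum_k\tau(e_k(s))\bigr)$, which in turn is proved by the induction you sketch (the join $\bigvee_{k<n}e_k$ lies in $W^*(e_1,\dots,e_{n-1})$, hence is freely independent from $e_n$, and $\min(1,\min(1,A)+B)=\min(1,A+B)$). This gives $\chi_{(s,\infty)}(a_*)=p(s)$ and hence the exact identity $\tau(\Phi(a_*))=\mathbb{E}(\Phi(\mu(X)\chi_{(0,1)}))$, with constant $1$ and no use of $\Delta_2$. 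The paper instead takes $a=\sum_k A_{1,k}+\mu(1,X)$ with $A_{1,k}=x_ke_{(\mu(1,X),\infty)}(x_k)$ and invokes Theorem \ref{main results 2}(i), which only yields the bound up to $C_\Phi$. So your route is genuinely different and arguably preferable for this direction.

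For the lower bound, however, you stop exactly where the real work begins: you observe that $a\geq s\,e_k(s)$ for each $k$ only gives control of $\max_k\tau(e_k(s))$, and you propose (without carrying it out) either a direct joint-distribution computation for the free projections, or transference via the free Kruglov operator. Neither is what the paper does, and the step you flag as missing is not filled. The paper's argument avoids the pointwise spectral estimate $\tau(\chi_{(Cs,\infty)}(a))\gtrsim\min(1,\lambda_X(s))$ entirely. Instead, set $p_k=\operatorname{supp}(A_{1,k})$; these are freely independent with $\sum_k\tau(p_k)\leq 1$, so by \cite[Corollary 33]{SZ} one has the operator-norm bound $\|\sum_{k=1}^n p_k\|_\infty\leq 64$. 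Since $A_{1,k}=p_kx_kp_k\leq p_kap_k$, the operator Jensen inequality \cite[Proposition 4.6(ii)]{FK} gives $\tau(\Phi(A_{1,k}))\leq\tau(\Phi(p_kap_k))\leq\tau(p_k\Phi(a)p_k)$. Summing over $k$ and using $\mathbb{E}(\Phi(\mu(X)\chi_{(0,1)}))=\sum_k\tau(\Phi(A_{1,k}))$ yields $\mathbb{E}(\Phi(\mu(X)\chi_{(0,1)}))\leq\tau\bigl(\Phi(a)\sum_kp_k\bigr)\leq 64\,\tau(\Phi(a))$. The free-independence input here is the compression of $\sum_kp_k$ into a bounded operator of norm $\leq 64$; this converts the ``max'' into a ``sum'' without ever estimating individual spectral levels of $a$, which is precisely the conversion you correctly identify as the obstruction but do not resolve.
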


Finally, we explain why Theorems \ref{main results 1} and \ref{main results 2} are sharp in the sense that $\Delta_2$-condition is necessary. The notations $L_{\Phi}[0,1]$ and 
$L_\Phi(\mathcal{M},\tau)$ stand for Orlicz spaces associated with function $\Phi$ and $\mathbb P$ stands for Lebesgue measure.

\begin{theorem}\label{sharp}
Suppose that $\Phi$ is an Orlicz function. Suppose that either
\begin{enumerate}[{\rm (i)}]
\item for every sequence $\{f_k\}_{k=1}^n$ of independent random variables from $L_{\Phi}[0,1]$ satisfying the condition $\sum_{k=1}^n\mathbb P(\text{supp}(f_k))\leq1,$ we have
\begin{equation}\label{sharpa}
 \mathbb{E}\Big(\Phi \big(\sum_{k=1}^n f_k\big)\Big) \leq C_\Phi\mathbb E\Big(\Phi \big(\bigoplus_{k=1}^n f_k\big)\Big);
\end{equation}
or
\item for every sequence $\{x_k\}_{k=1}^n$ of freely independent random variables from $L_\Phi(\mathcal{M},\tau)$ satisfying the condition $\sum_{k=1}^n\tau(\text{supp}(x_k))\leq1,$ we have
\begin{equation}\label{sharpb}
\tau\Big(\Phi\big(|\sum_{k=1}^n x_k|\big)\Big)\leq C_\Phi\tau\Big(\Phi\big(\bigoplus_{k=1}^n x_k\big)\Big).
\end{equation}
\end{enumerate}
Then $\Phi$ satisfies $\Delta_2$-condition.
\end{theorem}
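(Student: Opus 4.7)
The plan is to test each inequality against the simplest nontrivial extremal example, namely a pair of Bernoulli (or free Bernoulli) variables. For part (i), fix $t>0$ and let $f_1 = t\chi_{A_1}$, $f_2 = t\chi_{A_2}$, where $A_1, A_2 \subset (0,1)$ are independent measurable sets of measure $1/2$; the condition $\sum_k \mathbb{P}(\operatorname{supp}(f_k)) \leq 1$ then holds with equality. A direct calculation shows that $f_1+f_2$ takes the values $0, t, 2t$ with probabilities $1/4, 1/2, 1/4$, while $\bigoplus_{k=1}^{2} f_k$ equals $t$ on a set of measure $1$ and vanishes elsewhere, so
\begin{equation*}
\mathbb{E}(\Phi(f_1+f_2)) = \tfrac{1}{2}\Phi(t) + \tfrac{1}{4}\Phi(2t), \qquad \mathbb{E}\bigl(\Phi(\bigoplus\nolimits_{k=1}^{2}f_k)\bigr) = \Phi(t).
\end{equation*}
Substituting these into \eqref{sharpa} and solving for $\Phi(2t)$ yields $\Phi(2t) \leq (4C_\Phi - 2)\Phi(t)$ for every $t > 0$, which is exactly the $\Delta_2$-condition.

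For part (ii), the same strategy carries over to the free setting. Choose freely independent projections $p_1, p_2 \in \mathcal{M}$ of trace $1/2$ and set $x_k = tp_k$; the right-hand side of \eqref{sharpb} again evaluates to $\Phi(t)$ by the same bookkeeping as above. For the left-hand side I would invoke the well-known fact that the free additive convolution of two copies of $\tfrac{1}{2}(\delta_0+\delta_1)$ is the arcsine law on $[0,2]$, giving
\begin{equation*}
\tau(\Phi(t(p_1+p_2))) = \int_{0}^{2} \Phi(tx)\,\frac{dx}{\pi\sqrt{x(2-x)}}.
\end{equation*}
Restricting the integral to $[3/2, 2]$ bounds this below by $c\,\Phi(3t/2)$ for a universal constant $c>0$. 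Combining with \eqref{sharpb} gives $\Phi(3t/2) \leq c^{-1}C_\Phi \Phi(t)$, and iterating this inequality twice (since $(3/2)^{2} = 9/4 > 2$) yields $\Phi(2t) \leq \Phi(9t/4) \leq (c^{-1}C_\Phi)^{2}\Phi(t)$, again the $\Delta_2$-condition.

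The only step beyond elementary arithmetic is invoking the arcsine law for the free sum of two Bernoulli$(1/2)$ distributions, which is classical (obtainable from Voiculescu's $R$-transform, or by a direct moment computation using freeness). I therefore do not foresee any genuine obstacle; the main conceptual point is simply recognizing that a two-term example with variables of support measure $1/2$ is already enough to force $\Delta_2$, so no delicate limit or large-$n$ asymptotics is needed.
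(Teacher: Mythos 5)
Your proof is correct, and it is genuinely different from (and more elementary than) the paper's. The paper tests the inequality against $n$ equimeasurable variables with $\mu(f_{k,n})=a\chi_{(0,1/n)}$ and lets $n\to\infty$, so that $\sum_k f_{k,n}$ converges in distribution to $aK_{\rm class}1$ (part (i)) or $aK_{\rm free}1$ (part (ii)), a (free) Poisson variable; it then invokes Fatou's lemma and the fact that this limit distribution puts positive mass above $2$. Your argument replaces this limiting construction by the single two-term example of Bernoulli / free Bernoulli variables of half-mass, where both sides of the inequality can be computed in closed form: in the classical case the sum is a binomial, and in the free case the free convolution of two centered-and-scaled Bernoullis is the arcsine law with density $\frac{1}{\pi\sqrt{x(2-x)}}$ on $(0,2)$, which you quote correctly. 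This avoids the Kruglov operator, the convergence-of-cumulants argument, and Fatou entirely. The only price is that the arcsine law is supported on $[0,2]$ rather than $[0,4]$, so you only get a dilation factor of $3/2<2$ directly and must iterate once; this is a harmless cosmetic difference since $\Phi(\alpha t)\leq C_\alpha\Phi(t)$ for any one $\alpha>1$ already forces $\Delta_2$. What the paper's approach buys, conceptually, is that it makes visible the role of the (free) Poisson distribution as the extremal object behind the Johnson--Schechtman phenomenon; what your approach buys is a shorter, self-contained proof that does not rely on the convergence machinery set up elsewhere in the paper.
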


The theorem above allows us to make some interesting comparisons between modular inequalities as stated above and Orlicz norm inequalities from \cite{Astashkin2005,AS1,AS3,AS4,ASW,pacific,B}. It is well known that every Orlicz space 
($L_{\Phi}[0,1]$, $\|\cdot\|_\Phi$) with the function $\Phi$ satisfying the $\Delta_2$-condition contains  $L_p[0,1]$ for some $p<\infty$ and therefore \eqref{JS}  holds for such space. Hence, the operator $K_{\rm class}$ is bounded on $L_{\Phi}[0,1]$ \cite{JS} and so the conditions $\mathbb{E}(\Phi(f))<\infty$ and 
$\mathbb{E}(\Phi(\pi(f)))<\infty$ are equivalent for every  $f\in L_{\Phi}[0,1]$ by Theorem \ref{Kruglov} (see \cite{B} and \cite{Astashkin2005}). 
Our new results in \eqref{mainca} and \eqref{maincb} complement this line of thought. However, there exists a significant difference between symmetric norm estimates \eqref{JS} and 
$\Phi$-moment estimates studied in the present paper. Indeed, suppose that $\Phi$ does not satisfy the $\Delta_2$-condition but still satisfies the assumption of Theorem \ref{Kruglov} \eqref{kra}. Then the inequality
$$\|\sum_{k=1}^n f_k\|_\Phi \leq C_\Phi \|\bigoplus_{k=1}^n f_k\|_\Phi$$
holds for any sequence $\{f_k\}_{k=1}^n$ of independent random variables from $L_{\Phi}[0,1]$ satisfying the condition $\sum_{k=1}^n\mathbb P(\text{supp}(f_k))\leq1$, whereas the similar inequality \eqref{sharpa} for $\Phi$-moments fails due to Theorem \ref{sharp}(a)! This unexpected result indicates a substantial difference between Johnson-Schechtman inequalities for symmetric norms and for $\Phi$-moments.


We complete this introduction by referring the reader to \cite{K} for a different approach to bounds of $\Phi$-moments of sum of independent random variables (see also Theorem 1.4.11 in \cite{LG}), and to \cite{BC,BCLJ,D,DR} for some progress on noncommutative $\Phi$-moment martingale inequalities.

This paper is further divided into seven sections. In Section 2, we present some classical and noncommutative notations  and preliminaries. Section 3 is devoted to proving Theorem \ref{main results 1}, Theorem \ref{CM} and Theorem \ref{sharp} \eqref{sharpa}; Theorem \ref{main results 2}  and Theorem \ref{sharp} \eqref{sharpb} are proved in Section 4. In Section 5 we discuss the noncommutative maximal inequalities and prove Theorem \ref{Mright}. In Section 6, we mainly prove some new results on free Johnson-Schechtman inequalities in the setting of quasi-Banach symmetric operator spaces. Finally, we state an open problem in Section 7.

\section{Preliminaries}

\subsection{Orlicz functions and classical Kruglov operator}

For a Lebesgue measurable, a.e. finite function $f$ on $(0,1)$ (or $(0,\infty)$) we define its $distribution$ $function$ by
$$\lambda(s,f):=\mathbb P\{t:f(t)>s\},\quad s\in\mathbb{R},$$
where $\mathbb P$ stands for Lebesgue measure. Let $S(0,1)$ (respectively, $S(0,\infty)$) denote the space of all Lebesgue measurable functions on $(0,1)$.

Two measurable functions $f$ and $g$ are called {\it equimeasurable} if both $\lambda(f_+)=\lambda(g_+)$ and $\lambda(f_)=\lambda(g_-)$ (if the functions live on $(0,1),$ then this is equivalent to $\lambda(f)=\lambda(g)$). Here, $f_+=f\vee 0$ and $f_-=-f\vee 0.$ In particular, for every measurable function $f,$ the function $|f|$ is equimeasurable with its {\it decreasing rearrangement} $\mu(f),$ defined by the formula
$$\mu(t,f):=\inf\{u\geq0:\lambda(u,|f|)<t\},\quad t>0.$$
If $f,g\geq0$, then $\mu(f)=\mu(g)$ if and only if $f$ and $g$ are equimeasurable. We recall that a function $f$ is said to be symmetrically distributed if $f$ and $-f$ are equimeasurable. Let $0\leq f, g \in L_1(0,1)$.

\begin{definition}
Let $E\subset S(0,1)$ (or $E\subset S(0,\infty)$) be a quasi-Banach space.
\begin{enumerate}[{\rm (1)}]
\item $E$ is said to be a quasi-Banach function space if, from $f\in E,$ $g\in S(0,1)$ (or $g\in S(0,\infty)$) and $|g|\leq|f|$, it follows that $g\in E $ and $\|g\|_E \leq\|f\|_E.$
\item A quasi-Banach function space $E$ is said to be symmetric if, for every $f\in E $ and any measurable function $g$, the assumption $\mu(g)=\mu(f)$ implies that $g\in E$ and $\|g\|_E=\|f\|_E.$
\end{enumerate}
\end{definition}
Without loss of generality, in what follows we assume that $\|\chi_{(0,1)}\|_E=1$, where $\chi_A$ denotes the indicator function of a Lebesgue measurable set $A.$

The following useful construction may be found in  \cite{johnson, JS, Astashkin2005}. If $E$ is a quasi-Banach symmetric function space on $(0,1)$ and $0<p\leq \infty$, 
then the space $Z_E^p$ consists of all $f\in S(0,\infty)$ such that
$$\|f\|_{Z_E^p}:=\|\mu(f)\chi_{(0,1)}\|_E+\|\mu(f)\chi_{(1,\infty)}\|_p<\infty.$$
It is not difficult to check that the functional $\|\cdot\|_{Z_E^p}$ is a quasi-norm on $Z_E^p.$

\begin{definition} Let $f,g\in L_1(0,1)$. We write $g\prec\prec f$ if
$$\int_0^t\mu(s,g)ds\leq\int_0^t\mu(s,f)ds,\quad t>0.$$
If, in addition, $f,g\geq0$ and $\|f\|_1=\|g\|_1,$ then we write $g\prec f.$
\end{definition}

The following assertion is Lemma 13 in \cite{pacific}.

\begin{lemma}\label{pacific lemma} Let $\{f_k\}_{k=1}^n$ and $\{g_k\}_{k=1}^n$, $n\in \mathbb{N}$, be sequences of positive and independent functions from $L_1(0,1)$. If $g_k\prec f_k$, for each $k$, $1\leq k\leq n$, then
$$\sum_{k=1}^n g_k\prec \sum_{k=1}^n f_k.$$
\end{lemma}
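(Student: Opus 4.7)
The strategy is to reduce the majorization $\sum_k g_k \prec \sum_k f_k$ to the convex-function inequality $\mathbb{E}\varphi(\sum_k g_k) \le \mathbb{E}\varphi(\sum_k f_k)$ and then to verify the latter by swapping the summands one coordinate at a time.

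First I invoke the Hardy-Littlewood-P\'olya characterization: for nonnegative $h, H \in L_1(0,1)$ with $\|h\|_1 = \|H\|_1$, the relation $h \prec H$ holds if and only if $\int \varphi(h)\,d\mathbb{P} \le \int \varphi(H)\,d\mathbb{P}$ for every continuous convex $\varphi:[0,\infty)\to\mathbb{R}$ (with the convention that integrals may be $+\infty$). Both sums are nonnegative and share the common $L_1$-norm $\sum_k \|g_k\|_1 = \sum_k \|f_k\|_1$, so it suffices to establish the convex inequality. Enlarge the probability space if necessary so that the two already-independent families $(g_k)_{k=1}^n$ and $(f_k)_{k=1}^n$ are also mutually independent of one another; this leaves the distribution of each sum unchanged.

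Fix a bounded continuous convex $\varphi$ (the unbounded case will be recovered at the end by monotone approximation). For the first swap, set $S_1 := g_2 + \cdots + g_n$ and condition on $S_1$: because $S_1 \ge 0$ is a constant under the conditioning, the map $t \mapsto \varphi(t + S_1)$ is convex on $[0,\infty)$, and by construction $f_1$ is independent of $S_1$. Applying the one-variable Hardy-Littlewood-P\'olya inequality to $g_1 \prec f_1$ against this convex function yields
\begin{equation*}
\mathbb{E}[\varphi(g_1 + S_1) \mid S_1] \le \mathbb{E}[\varphi(f_1 + S_1) \mid S_1],
\end{equation*}
and taking outer expectation gives $\mathbb{E}\varphi(g_1 + S_1) \le \mathbb{E}\varphi(f_1 + S_1)$. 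Iterating this swap for $k = 2,\ldots, n$, each time conditioning on all other variables and using that $f_k$ is independent of everything already in play, I arrive at $\mathbb{E}\varphi(\sum_k g_k) \le \mathbb{E}\varphi(\sum_k f_k)$. The converse direction of Hardy-Littlewood-P\'olya, applied one final time, concludes $\sum_k g_k \prec \sum_k f_k$.

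The main technical subtlety is the passage from bounded to general continuous convex $\varphi$: one writes any such $\varphi$ as an increasing pointwise supremum of bounded continuous convex minorants (e.g.\ truncating at height $N$ and letting $N \to \infty$) and applies monotone convergence on both sides. This extension step is the only place where care is needed; the core argument is the single-coordinate swap driven by convexity plus independence.
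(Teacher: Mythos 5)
The paper cites this lemma from \cite{pacific} without proof, and in Section~\ref{jsqb} proves a free analogue (Lemma~\ref{sssb}) by a genuinely different route: discretize the rearrangements, use the finite-dimensional majorization theorem (Lemma~\ref{finite dim majorization lemma}) to express each discretized $y_k$ as a convex combination of permutations of the discretized $x_k$, and combine with submajorization of convex combinations. Your conditioning-and-swap argument is a valid alternative in the commutative setting, and its core is sound: after enlarging the probability space so that $(f_k)_k$ and $(g_k)_k$ are jointly independent (legitimate since $\prec$ depends only on joint distributions), replace $g_k$ by $f_k$ one coordinate at a time, using that $t\mapsto\varphi(t+s)$ is convex for each fixed $s\geq 0$ and applying the one-variable Hardy--Littlewood--P\'olya inequality to $g_k\prec f_k$.

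The step you flag as the ``main technical subtlety'' is, however, incorrect as written. A bounded convex function on $[0,\infty)$ is necessarily nonincreasing, so the class of bounded continuous convex functions on $[0,\infty)$ is essentially trivial; moreover $\min(\varphi,N)$ is not convex, so truncating $\varphi$ at height $N$ does not produce a convex minorant. In particular, no nonconstant increasing convex $\varphi$ can be written as an increasing pointwise supremum of bounded convex minorants, so the claimed monotone-convergence extension does not apply. Fortunately the detour through bounded $\varphi$ is unnecessary. One clean repair is to specialize from the start to the angle functions $\varphi_s(t)=(t-s)_+$, $s\geq 0$, which are convex, nonnegative and of linear growth (so all expectations involved are finite): the family of inequalities $\mathbb{E}\bigl((\sum_k g_k-s)_+\bigr)\leq\mathbb{E}\bigl((\sum_k f_k-s)_+\bigr)$, $s\geq0$, together with $\|\sum_k g_k\|_1=\|\sum_k f_k\|_1$, already yields $\sum_k g_k\prec\sum_k f_k$. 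Alternatively, subtract a supporting affine function (whose contributions to both sides agree because $\|g_k\|_1=\|f_k\|_1$), reduce to nonnegative convex $\varphi$, and run the single-coordinate swap via Tonelli's theorem with both sides allowed to equal $+\infty$. Either repair closes the gap; the rest of your argument stands.
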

In Section \ref{jsqb} below, we prove a free version of Lemma \ref{pacific lemma}.

Let $f_k$, $k\geq1$, be elements from $S(0,1)$ and let $g_k\in S(0,\infty)$, $k\geq1$, be their disjoint copies; that is, $f_k$ is equimeasurable with $g_k$ for all $k\geq1$, and $g_lg_m=0$ if $l\neq m$. For example, we can set $g_k(t)=f_k(t-k+1)\chi_{[k-1,k)}(t)\ (t>0)$. For the function $\sum_{k\geq0}g_k$, which is frequently called the $disjoint$ $sum$ of $f_k,k\geq1$, we  use the suggestive notation $\bigoplus _{k\geq1}f_k$. It is important to observe that the distribution function of a disjoint sum $\bigoplus _{k\geq1}f_k$
does not depend on the particular choice of elements $g_k,\ k\geq1$. Obviously, the disjoint sum has the following property:
$$\mathbb P\big\{s>0:\bigoplus _{k=1}^n f_k(s)>\lambda\big\}=\sum_{k=1}^n\mathbb P\{s\in(0,1):f_k(s)>\lambda\}.$$
In the special case when $\sum_{k=1}^n \mathbb P\{\text{supp}(f_k)\}\leq1,n\in \mathbb{N}$, it is convenient to view the disjoint sum $\bigoplus _{k\geq1}f_k$ as a measurable function on $(0,1).$ 

The following result is well known; see for instance Lemma 3 in \cite{JS}. It plays an important role in the proof of the classical Johnson-Schechtman theorem, but unfortunately, we do not have its free version, which we discuss (see Problem 7.1) at the end of this article.  We recall that the dilation operator $\sigma_s:S(0,1)\rightarrow S(0,1)$, $s\in (0,1)$ is given by 
$(\sigma_sx)(t)=x(\frac{t}{s})$ if $t\in (0,s)$; otherwise $(\sigma_sx)(t)=0$. 

\begin{lemma}\label{wbjohn lemma}
Let $\{f_k\}_{k=1}^n$ be a sequence of positive independent random variables on $[0,1]$ such that $\sum_{k=1}^n\mathbb{P}({\rm supp}(f_k))\leq 1.$ We  have
$$\mu(\bigoplus_{k=1}^nf_k)\leq\sigma_2\mu(\max_{1\leq k\leq n}f_k)\mbox{ and, therefore, }\mu(\bigoplus_{k=1}^nf_k)\leq\sigma_2\mu(\sum_{k=1}^nf_k).$$
\end{lemma}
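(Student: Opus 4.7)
The plan is to reduce everything to a direct comparison of distribution functions at each level $\lambda>0$, exploiting disjointness for the left-hand side and independence for the right-hand side, and then to finish with a short elementary inequality.

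First I would record the two distribution functions. Because the summands of $\bigoplus_{k=1}^n f_k$ are (by construction) pairwise disjoint copies, the formula already noted in the text gives
$$\mathbb{P}\Bigl\{s>0:\bigoplus_{k=1}^n f_k(s)>\lambda\Bigr\}=\sum_{k=1}^n\mathbb{P}(f_k>\lambda).$$
On the other hand, independence of the $f_k$ yields
$$\mathbb{P}\bigl(\max_{1\leq k\leq n}f_k>\lambda\bigr)=1-\prod_{k=1}^n\bigl(1-\mathbb{P}(f_k>\lambda)\bigr).$$
Setting $a_k=\mathbb{P}(f_k>\lambda)$, the hypothesis $\sum_k\mathbb{P}(\text{supp}(f_k))\leq1$ forces $a_k\in[0,1]$ and $\sum_k a_k\leq 1$.

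Next I would prove the elementary inequality
$$1-\prod_{k=1}^n(1-a_k)\ \geq\ \tfrac12\sum_{k=1}^n a_k\qquad\text{whenever }a_k\in[0,1],\ \sum_k a_k\leq 1.$$
The clean route is $1-\prod_k(1-a_k)\geq 1-\exp(-\sum_k a_k)$ (from $1-x\leq e^{-x}$), combined with the one-variable bound $1-e^{-s}\geq s/2$ on $[0,1]$, which follows by checking endpoints and monotonicity of $s\mapsto 1-e^{-s}-s/2$. Combining the two displays above, I obtain the key pointwise comparison of distribution functions
$$\mathbb{P}\Bigl(\bigoplus_{k=1}^n f_k>\lambda\Bigr)\ \leq\ 2\,\mathbb{P}\bigl(\max_{1\leq k\leq n}f_k>\lambda\bigr),\qquad\lambda>0.$$

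Finally I would pass from this to the stated inequality on decreasing rearrangements. For $t\in(0,2)$, take any $\lambda>\mu(t/2,\max_k f_k)$; then $\mathbb{P}(\max_k f_k>\lambda)<t/2$, hence $\mathbb{P}(\bigoplus_k f_k>\lambda)<t$, so $\mu(t,\bigoplus_k f_k)\leq\lambda$, and letting $\lambda\downarrow\mu(t/2,\max_k f_k)$ yields $\mu(t,\bigoplus_k f_k)\leq(\sigma_2\mu(\max_k f_k))(t)$. For $t\geq 2$ both sides vanish, since $\bigoplus_k f_k$ is supported on a set of measure at most $1$. The second assertion is then immediate from $\max_k f_k\leq\sum_k f_k$ together with the monotonicity of $\mu$ and of the dilation $\sigma_2$. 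The only nontrivial step is the inequality $1-e^{-s}\geq s/2$ on $[0,1]$, and it is entirely elementary, so no genuine obstacle arises.
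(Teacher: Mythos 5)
The paper does not prove this lemma; it simply cites Lemma~3 in \cite{JS} (calling the result ``well known''). Your proof is a correct, self-contained argument following the standard route one would expect: reduce to a pointwise comparison of distribution functions, use disjointness for $\bigoplus_k f_k$ and independence for $\max_k f_k$, then prove the elementary inequality
$$1-\prod_{k=1}^n(1-a_k)\ \geq\ 1-e^{-\sum_k a_k}\ \geq\ \tfrac12\sum_{k=1}^n a_k,\qquad a_k\in[0,1],\ \textstyle\sum_k a_k\leq 1,$$
and finally translate the tail bound $\mathbb{P}(\bigoplus_k f_k>\lambda)\leq 2\,\mathbb{P}(\max_k f_k>\lambda)$ into the rearrangement estimate via the definition of $\mu(t,\cdot)$ and right-continuity of distribution functions. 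Every step checks out: the hypothesis $\sum_k\mathbb{P}(\mathrm{supp}\,f_k)\leq 1$ is exactly what guarantees $\sum_k a_k\leq 1$ at each level $\lambda>0$; the one-variable inequality $1-e^{-s}\geq s/2$ on $[0,1]$ holds (equality at $0$, and $s\mapsto 1-e^{-s}-s/2$ is nonnegative since it increases to a positive maximum at $s=\ln 2$ and has value $1/2-1/e>0$ at $s=1$); and the passage from distribution functions to rearrangements, plus the final reduction of the second assertion to the first via $\max_k f_k\leq\sum_k f_k$, are all fine. This is essentially the argument behind \cite[Lemma 3]{JS}, so there is no real methodological divergence to report.
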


Let $\Phi:\mathbb{R}\to\mathbb{R}_+$ be an Orlicz function, i.e. an even convex function such that $\Phi(0)=0.$ 
In the following, we will use the notation ${\mathbb E}\Phi= \int_0^1
\Phi(s)\,ds.$ By $L_{\Phi}$ we denote the class of all measurable functions $f$ on $[0,\infty)$ such that the norm
$$\|f\|_{L_{\Phi}}:=\inf\Big\{\lambda>0:\int_0^{\infty}\Phi(\frac{f(t)}{\lambda})dt\leq1\Big\}$$
is finite. It is well known that $L_\Phi$ is a symmetric function space \cite{KPS}. An Orlicz function $\Phi$ satisfies the $\Delta _2$-condition 
if there is a constant $C$ such that $\Phi(2t)\leq C\Phi(t)$ for all $t>0.$ In this case, for every $a>0$ there is a constant $C_a>0$ such that $\Phi(at) \leq C_a\Phi(t)$ for all $t>0.$ Equivalently, an Orlicz function $\Phi$ satisfies the $\Delta_2$-condition if and only if
\begin{equation}\label{ineq13}
\Phi(u+v)\le \gamma(\Phi(u)+\Phi(v))
\end{equation}
for some constant $\gamma>0$ and all $u,v\geq 0$ (see for e.g. \cite[Formula (7.9)]{Burk}). In particular, any such functions satisfies the condition of Theorem \ref{Kruglov} \eqref{krb}.

It is well known (see e.g. \cite[Theorem 11]{DSZ} and references therein) that
$$g\prec\prec f\Longrightarrow {\mathbb E}\Phi(g)\leq {\mathbb E}\Phi(f).$$

Before introducing the definition of the Kruglov operator, we consider the probability product space
$$(\Omega,\mathbb{P}):=\prod_{k=0}^{\infty}((0,1),\mathbb{P}_k),$$
($\mathbb P_k$ is the Lebesgue measure on $(0,1),k\geq0$). Observe that in an arbitrary symmetric space the norms of any two elements with identical distribution coincide. Hence, using a bijective measure-preserving transformation between measure space $(\Omega,\mathbb{P})$ and $((0,1),\mathbb P)$, we  identify an arbitrary measurable function $f(\omega)=f(\omega_0,\omega_1,\cdots,\omega_n\cdots)$ on $(\Omega,\mathbb{P})$ with the corresponding element from $S(0,1)$. A particular form of the measure-preserving transformation used in such identification does not play any role and we completely suppress it from the notations. Thus, we  view the set $\Omega$ as $(0,1)$ and any measurable function on $(\Omega,\mathbb{P})$ as a function from $S(0,1)$.


Now, we are ready to explain the notion of the Kruglov operator introduced in \cite{AS1}. Let $\{A_n\}_{n=0}^{\infty}$ be a fixed sequence of mutually disjoint measurable subsets of $(0,1)$ such that $\mathbb P(A_n)=\frac1{e\cdot n!}.$ Define the operator $K_{\rm class}:S(0,1)\rightarrow S(0,1)$
by setting
\begin{equation}\label{definitionofKclass}
 K_{\rm class}f(\omega):=\sum_{n=1}^{\infty}\sum_{k=1}^n f(\omega _k)\chi_{A_n}(\omega_0).
\end{equation}

In this paper, Kruglov operator $K_{\rm class}$ is an important tool to compare sums of independent functions with sums of their disjoint copies. The following assertion appeared yet in \cite{Astashkin2005}, but the very first proof was given only in \cite{ASW}. 

\begin{theorem}\label{DFclass}
 If $f_k\in S(0,1)$, $1\leq k \leq n$, are disjointly supported functions, then $K_{\rm class}f_k$, $1\leq k \leq n$, are independent random variables.
\end{theorem}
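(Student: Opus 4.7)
The plan is to verify independence via characteristic functions, exploiting the fact that $K_{\rm class}$ is linear. Fix real numbers $t_1,\ldots,t_n$ and set $g_j := K_{\rm class} f_j$. By linearity,
\begin{equation*}
\sum_{j=1}^n t_j g_j = K_{\rm class} h, \qquad h := \sum_{j=1}^n t_j f_j \in S(0,1).
\end{equation*}
Thus it suffices to compute $\mathbb{E}[\exp(i K_{\rm class} h)]$ in closed form and to verify that the answer factorises as $\prod_{j=1}^n \mathbb{E}[\exp(i t_j g_j)]$.

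For the closed form, I would apply the definition \eqref{definitionofKclass} directly. Since the coordinates $\omega_1,\omega_2,\ldots$ are mutually independent and independent of $\omega_0$, conditioning on $\omega_0 \in A_n$ (which has probability $1/(e\cdot n!)$) makes $h(\omega_1),\ldots,h(\omega_n)$ into $n$ i.i.d.\ copies of $h$ regarded as a random variable on $((0,1),\mathbb{P})$. Hence
\begin{equation*}
\mathbb{E}\bigl[e^{i K_{\rm class} h}\bigr] = \sum_{n=0}^{\infty} \mathbb{P}(A_n) \bigl(\mathbb{E}[e^{ih}]\bigr)^n = \frac{1}{e}\sum_{n=0}^{\infty}\frac{(\mathbb{E}[e^{ih}])^n}{n!} = \exp\bigl(\mathbb{E}[e^{ih}] - 1\bigr),
\end{equation*}
where the $n=0$ term contributes $\mathbb{P}(A_0) = 1/e$ since $K_{\rm class} h$ vanishes whenever $\omega_0 \in A_0$.

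The last step is where the disjoint-support hypothesis is used. Because $\text{supp}(f_j)\cap \text{supp}(f_{j'}) = \emptyset$ for $j\neq j'$, at almost every $\omega\in(0,1)$ at most one summand $t_j f_j(\omega)$ is nonzero. A short case analysis therefore yields the pointwise identity
\begin{equation*}
e^{i h(\omega)} - 1 = \sum_{j=1}^n \bigl(e^{i t_j f_j(\omega)} - 1\bigr) \quad \text{for a.e.\ } \omega\in(0,1).
\end{equation*}
Integrating and substituting back into the Poisson-type exponential above produces
\begin{equation*}
\mathbb{E}\bigl[e^{i K_{\rm class} h}\bigr] = \exp\!\Bigl(\sum_{j=1}^n \bigl(\mathbb{E}[e^{i t_j f_j}] - 1\bigr)\Bigr) = \prod_{j=1}^n \exp\bigl(\mathbb{E}[e^{i t_j f_j}] - 1\bigr) = \prod_{j=1}^n \mathbb{E}\bigl[e^{i t_j g_j}\bigr],
\end{equation*}
where the last equality applies the formula derived in the previous step with $h$ replaced by $t_j f_j$. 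Since the joint characteristic function of $(g_1,\ldots,g_n)$ factors, Kac's theorem yields the asserted independence.

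The only non-trivial step is the additivity identity for $e^{ih}-1$: this is the sole place where disjointness of supports is invoked, and it converts the multiplicative structure $\prod_j e^{it_j f_j}$ into the additive form needed inside the Poisson exponential. Everything else is bookkeeping, namely identifying $(\Omega,\mathbb{P})$ with $((0,1),\mathbb{P})$ via a measure-preserving bijection and keeping track of which coordinate $\omega_k$ plays which role in $K_{\rm class}$.
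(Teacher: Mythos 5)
Your proof is correct. The paper itself does not supply a proof of this theorem; it states only that the assertion appeared in \cite{Astashkin2005} and that the first proof was given in \cite{ASW}, so there is no in-paper argument to compare against line by line.

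That said, the characteristic-function route you take is clean and self-contained. The three pillars are all sound: (a) linearity of $K_{\rm class}$, which is immediate from \eqref{definitionofKclass}; (b) the compound-Poisson identity $\mathbb{E}[e^{iK_{\rm class} h}]=\exp(\mathbb{E}[e^{ih}]-1)$, obtained by splitting the expectation over the partition $\{\omega_0\in A_n\}_{n\geq 0}$ (with the $n=0$ term correctly contributing $\mathbb{P}(A_0)=1/e$ since $K_{\rm class}h$ vanishes there) and using that $\omega_1,\omega_2,\dots$ are i.i.d.\ and independent of $\omega_0$; and (c) the pointwise identity $e^{ih}-1=\sum_j(e^{it_jf_j}-1)$, which is exactly where disjointness of supports enters: at a.e.\ $\omega$ at most one $t_jf_j(\omega)$ is nonzero, so both sides reduce to the same single term or both vanish. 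Substituting (c) into (b) factors the joint characteristic function into $\prod_j\exp(\mathbb{E}[e^{it_jf_j}]-1)=\prod_j\mathbb{E}[e^{it_jg_j}]$, and Kac's theorem (factorisation of the joint characteristic function for all $t_1,\dots,t_n$ implies independence) closes the argument. No integrability concerns arise because $|e^{ih}|=1$, and $h$ is a.e.\ finite precisely because the $f_j$ are a.e.\ finite with disjoint supports. The only minor presentational quibble is that the $\Omega$ versus $(0,1)$ identification is being used silently in the inner expectations, but that is already set up by the paper's conventions and you note it at the end.
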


\subsection{Noncommutative probability spaces and freely independent random variables}

In this subsection we introduce some basic definitions and well-known results concerning  noncommutative probability space and free independence. We use standard notation for operator algebras  as may be found in the books \cite{KR,T}.  Let $\M$ be  a finite von Neumann algebra  equipped with a  faithful normal trace $\tau$. If $\tau(1)=1$, we call $(\M,\tau)$ be a noncommutative probability space.  
Let $S(\M,\tau)$ denote the $*$-algebra of all $\tau$-measurable operators with respect to $(\M,\tau)$ \cite{FK}. The topology of $S(\M,\tau)$ is determined by the convergence in measure (see e.g. \cite{FK}). Let $S^h(\M,\tau)$ denote the set of all self-adjoint elements in $S(\M,\tau),$ which are called (noncommutative) random variables.  For $x\in S^h(\M,\tau)$, the distribution function $\lambda(x)$ is defined by the formula
$$\lambda(s,x)=\tau\big(e_{(s,\infty)}(x)\big),\quad -\infty<s<\infty,$$
where $e_{(s,\infty)}(x)$ is the spectral projection of $x$ associated with the interval $(s,\infty)$. If $\mathcal{M}=L_{\infty}(0,1),$ 
then the latter definition coincides with the distribution function defined in the preceding subsection. 
The generalized singular value function $\mu(x): t\longrightarrow \mu(t,x)$, $x\in S(\M,\tau)$ is defined by 
$$\mu(t,x)=\inf\big\{s>0, \lambda(s,|x|)\leq t \big\},\quad 0\leq t\leq1.$$
Random variables $x, y\in S^h(\M,\tau)$ are said to be equimeasurable if $\lambda(x)=\lambda(y)$. For such operators, we also have $\mu(x)=\mu(y).$ We refer to \cite{FK} for further information on the generalised singular value function.

We say that random variables $x_k$ converge to $x$ in distribution if $\lambda(x_k)\to\lambda(x)$ almost everywhere. In particular, $\mu(x_k)\to\mu(x)$ almost everywhere. For the uniformly bounded sequence of random variables convergence in distribution is equivalent to the convergence of moments. Indeed, convergence of moments implies that
$$\tau(e^{itx_k})\to\tau(e^{itx})$$
for every $t\in\mathbb{R},$ that is, characteristic functions of the random variables $x_k$ converge to that of $x.$ Using result on convergence of characteristic functions proved in section II.13 in \cite{GnedKol}, we conclude the convergence in distribution.

Let $\Phi$ be an Orlicz function on $\mathbb R$.
For any $x\in S(\M,\tau)$, we have by \cite[Corollary 2.8]{FK}
\begin{equation}\label{traceequality}
 \tau(\Phi(|x|))=\int_0^\infty\Phi(\mu(t,x))dt.
\end{equation}

If Orlicz function $\Phi$ satisfies $\Delta_2$-condition, then combining \eqref{ineq13} with Theorem 4.4(iii) in \cite{FK} we obtain that for all random variables $x,y\in S(\M,\tau),$ we have
\begin{equation}\label{noncommineq13}
 \tau(\Phi(x+y))\leq C_\Phi(\tau(\Phi(x))+\tau(\Phi(y)).
\end{equation}

It follows from Jensen inequality  that
\begin{equation}\label{Jensen}
 \tau(\Phi(x))\geq\Phi(\tau(x)).
\end{equation}

We frequently use these inequalities in the sequel sometimes even without additional references.

We now introduce the free independence and free Kruglov operator.

Let $(\M,\tau)$ be a noncommutative probability space. The von Neumann subalgebras $\M_i, i\in I,$ of $\M$ are freely independent (with respect to $\tau$) if $\tau(x_1\cdots x_n)=0$ whenever $x_j\in \M_{i_j}$, $i_1\neq i_2\neq\cdots\neq i_n$ and $\tau(x_j)=0$ for $1\leq j\leq n$ and every $n\in \mathbb N$. A family of random variables $\{x_1,\cdots,x_n\}$ is said to be freely independent if the von Neumann subalgebras generated by $x_j$ are freely independent.

We frequently use the following fact: if $\{x_j\}_{j=1}^n$ is a sequence of freely independent random variables and if $y_j$ belongs to the von Neumann algebra generated by $x_j,$ then $\{y_j\}_{j=1}^n$ is also a sequence of freely independent random variables.

The following definition is taken from \cite{VDN} (see also \cite{NS1}).

\begin{definition} A von Neumann algebra $\M$ is called a free product of von Neumann subalgebras $\M_i, i\in I$ if for every von Neumann algebra $\mathcal{N}$ and for every set of unital $*$-homomorphisms $\pi_i:\M_i\rightarrow \mathcal{N}$ there exists a unique unital $*$-homomorphism $\pi:\M\rightarrow \mathcal{N}$ such that $\pi_i=\pi|_{\M_i}$.  The free product of von Neumann subalgebras $\M_i, i\in I$ is denoted by $\star_{i\in I}\M_i.$
\end{definition}

Let $\mathbb{F}_\infty$ be a free group with countably many generators. We denote the group von Neumann algebra associated with $\mathbb{F}_\infty$ equipped with canonical trace by $(L_\infty(\mathbb{F}_\infty),\tau).$ The algebra is known to be a finite factor and satisfies $L_\infty(\mathbb{F}_\infty)\simeq L_\infty(\mathbb{F}_\infty)\star L_\infty(\mathbb{F}_\infty).$ 

\begin{lemma}\label{FS}
Let $x_1,\cdots,x_n\in  S^h(\M,\tau) $ be freely independent random variables.  The distribution of $\sum_{k=1}^nx_k$ is uniquely determined by that of $x_k, 1\leq k\leq n.$
\end{lemma}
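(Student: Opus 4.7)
I recast the claim as follows: if $(x_k)_{k=1}^n\subset S^h(\M,\tau)$ and $(y_k)_{k=1}^n\subset S^h(\M',\tau')$ are two freely independent sequences (in possibly different noncommutative probability spaces) with $\lambda(x_k)=\lambda(y_k)$ for every $k,$ then $\lambda(\sum_k x_k)=\lambda(\sum_k y_k).$

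First I treat the bounded case, $x_k,y_k\in\M.$ For each $m\ge 0,$
$$\tau\Bigl(\bigl(\sum_{k=1}^n x_k\bigr)^m\Bigr)=\sum_{(i_1,\ldots,i_m)}\tau(x_{i_1}x_{i_2}\cdots x_{i_m}).$$
By the very definition of freeness, each mixed moment $\tau(x_{i_1}\cdots x_{i_m})$ is an explicit polynomial in the individual moments $\{\tau(x_k^p)\}_{k,p}$: one writes $x_{i_j}=(x_{i_j}-\tau(x_{i_j})\cdot 1)+\tau(x_{i_j})\cdot 1,$ expands, invokes the vanishing of the trace on alternating centered products, and iterates (the standard centering algorithm of \cite{VDN,NS1}). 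Equimeasurability gives $\tau(x_k^p)=\tau'(y_k^p)$ for all $k,p,$ and the same polynomial formulas then produce $\tau((\sum x_k)^m)=\tau'((\sum y_k)^m).$ Both sums are bounded self-adjoint operators, so their spectral measures are compactly supported and uniquely determined by their moments, which gives the equality of distributions.

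For the unbounded case I apply a truncation argument. Set $x_k^{(N)}:=x_k e_{[-N,N]}(x_k)\in W^*(x_k)$ and, analogously, $y_k^{(N)}.$ Since $x_k^{(N)}$ lies in the von Neumann algebra generated by $x_k,$ the sequence $(x_k^{(N)})_{k=1}^n$ remains freely independent (and similarly for $(y_k^{(N)})$), while $x_k^{(N)}$ and $y_k^{(N)}$ stay equimeasurable. The bounded case then yields $\lambda(\sum_k x_k^{(N)})=\lambda(\sum_k y_k^{(N)})$ for every $N.$ As $N\to\infty,$ $x_k^{(N)}\to x_k$ in measure by spectral calculus, so $\sum_k x_k^{(N)}\to\sum_k x_k$ in measure, and similarly on the $y$ side. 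Convergence in measure of self-adjoint random variables forces pointwise convergence of their distribution functions at every continuity point of the limit; combining this with the $N$-level identity yields $\lambda(\sum_k x_k)=\lambda(\sum_k y_k)$ on a dense set, hence everywhere by right continuity.

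\textbf{Main obstacle.} The only substantive ingredient is the combinatorial fact that freeness expresses every mixed moment as a fixed polynomial in the individual moments; I would simply quote this from \cite{VDN,NS1}. The remaining ingredients---preservation of free independence under the functional calculus within $W^*(x_k),$ and the equivalence of moment and distributional convergence for uniformly bounded random variables (already recalled in the excerpt)---are routine.
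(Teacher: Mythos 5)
The paper states Lemma \ref{FS} without proof, using it as a standard background fact of free probability theory, so there is no argument in the text to compare yours against. Your proof is essentially correct. The bounded case is the classical observation that freeness makes every mixed moment $\tau(x_{i_1}\cdots x_{i_m})$ a universal polynomial in the individual moments $\{\tau(x_k^p)\}_{k,p}$ (via the centering/alternating-products algorithm or, equivalently, the vanishing of mixed free cumulants), combined with moment-determinacy for compactly supported spectral measures. The truncation $x_k^{(N)}=x_k e_{[-N,N]}(x_k)$ is well chosen: it lies in the von Neumann algebra generated by $x_k$, so free independence persists (the paper explicitly records this stability of freeness under functional calculus in its preliminaries); it is the same Borel function applied to $x_k$ and $y_k$, so equimeasurability persists; and $x_k^{(N)}\to x_k$ in the measure topology. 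The one step you leave to citation --- that convergence in measure of self-adjoint $\tau$-measurable operators implies weak convergence of the spectral distributions --- is true and standard; a painless way to see it is to pass to Cayley transforms, reducing to uniformly bounded unitaries whose moments then converge. A shorter route that bypasses truncation entirely is to cite the Bercovici--Voiculescu theory of free additive convolution of arbitrary probability measures on $\mathbb{R}$: the $R$-transform is additive under freeness and characterizes the distribution, which gives the lemma directly and is more in line with the cumulant/transform methods the paper itself uses in Lemmas \ref{cumpois} and \ref{frsum}.
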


We frequently use the following lemma, which is a generalization of Schmidt decomposition for compact operators on a Hilbert space. Its proof can be found in \cite[Theorem 3.5]{DDP}  and \cite[Lemma 4.1]{CS}.

\begin{lemma}\label{embedding}
 Let $(\mathcal{M},\tau )$ be a finite
non-atomic von Neumann algebra. If $x\in S(\M,\tau)$ then there exists  a positive
rearrangement-preserving algebra $\ast $-isomorphism
$J_{\left\vert{x}\right\vert }$ of $S(0,1 )$ into $S(\M,\tau)$ such that
\begin{equation*}
J_{\left\vert x\right\vert }(\mu (x))=|x|.
\end{equation*}
In particular, for $f\in S(0,1)$,
\begin{equation*}
\mu(f)=\mu (J_{\left\vert x\right\vert }(f)).
\end{equation*}
\end{lemma}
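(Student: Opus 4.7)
The strategy is to exhibit a non-atomic abelian von Neumann subalgebra $\mathcal{A}\subseteq\mathcal{M}$ with $|x|$ affiliated to $\mathcal{A}$, identify $\mathcal{A}$ with $L_\infty(0,1)$ as a tracial algebra, rearrange so that $|x|$ corresponds to $\mu(x)$, and finally extend from bounded to measurable operators. All the substantive content of the Lemma is packed into the first of these four steps; everything afterwards is standard functoriality of the $S$-space construction.

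First I would form the abelian von Neumann algebra $W^{\ast}(|x|)\subseteq\mathcal{M}$ generated by the spectral projections of $|x|$. Its trace structure reflects the distribution function $\lambda(\cdot,|x|)$, but it may well have atoms (for instance if $|x|$ has eigenvalues whose spectral projections carry positive trace). Since $\mathcal{M}$ itself is non-atomic, every projection $p\in\mathcal{M}$ of positive trace admits subprojections of every intermediate trace in $(0,\tau(p))$, so a Zorn's lemma exhaustion enlarges $W^{\ast}(|x|)$ to a maximal abelian $\ast$-subalgebra $\mathcal{A}\subseteq\mathcal{M}$ containing the spectral resolution of $|x|$ and on which $\tau|_{\mathcal{A}}$ is non-atomic. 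The uniqueness of non-atomic probability measure algebras then yields a trace-preserving $\ast$-isomorphism $\iota\colon L_\infty(0,1)\to\mathcal{A}$; under $\iota$ the element $|x|$ corresponds to some $g\in S(0,1)$ with $\mu(g)=\mu(|x|)=\mu(x)$. A measure-preserving bijection of $(0,1)$ rearranging $\mu(x)$ into $g$, composed with $\iota$, produces a trace-preserving $\ast$-isomorphism sending $\mu(x)\mapsto|x|$. Because trace-preserving $\ast$-homomorphisms between finite von Neumann algebras are automatically continuous for the measure topology, this map extends uniquely to a $\ast$-homomorphism $J_{|x|}\colon S(0,1)\to S(\mathcal{M},\tau)$. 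Since $\iota$ is trace-preserving, $J_{|x|}$ preserves distribution functions and hence generalised singular values, which gives both injectivity and the assertion $\mu(f)=\mu(J_{|x|}(f))$.

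The main obstacle I anticipate is the construction of the non-atomic enlargement $\mathcal{A}$ of $W^{\ast}(|x|)$ inside $\mathcal{M}$ without perturbing $|x|$. This requires carefully splitting each atomic spectral projection of $|x|$ into a continuum of mutually commuting sub-projections of $\mathcal{M}$ and then verifying, via a Zorn-type transfinite argument, that the resulting chain closes up to a von Neumann subalgebra whose restricted trace is diffuse. Once $\mathcal{A}$ is in hand, the rearrangement bijection on $(0,1)$ and the functorial extension to the $S$-spaces are routine, and $|x|$ remaining affiliated to $\mathcal{A}$ at every step ensures that its spectral data inside $\mathcal{A}$ and inside $\mathcal{M}$ agree.
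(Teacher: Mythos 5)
The paper does not prove this lemma; it simply cites \cite[Theorem 3.5]{DDP} and \cite[Lemma 4.1]{CS}. Your outline is the standard one, but as written it contains one genuine gap and one smaller imprecision.

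The gap is the claim that there is a measure-preserving \emph{bijection} $\sigma$ of $(0,1)$ rearranging $\mu(x)$ into $g$. Two equimeasurable functions on $(0,1)$ need not be related by a measure-preserving bijection. For instance, take $\mu(x)(t)=1-t$ and let $g$ be the doubling map $g(t)=2t\ (\mathrm{mod}\ 1)$; both are uniformly distributed, but since $\mu(x)$ is injective, any $\sigma$ with $\mu(x)\circ\sigma=g$ is forced to be $\sigma=1-g$, a two-to-one map. What is true (Ryff's theorem) is that there exists a measure-preserving \emph{transformation} $\sigma$, not necessarily injective, with $g=\mu(g)\circ\sigma$; the induced map $f\mapsto f\circ\sigma$ on $L_\infty(0,1)$ is then an injective, trace-preserving $\ast$-endomorphism that need not be surjective, and composing with $\iota$ still produces the required embedding into $S(\mathcal{M},\tau)$ --- which suffices, since the lemma only asks for an isomorphism \emph{into}. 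Alternatively you can avoid both the MASA and Ryff's theorem by interpolating the spectral resolution of $|x|$ directly: using non-atomicity of $\mathcal{M}$, build an increasing family of projections $\{p_t\}_{t\in[0,1]}$ with $\tau(p_t)=t$ that agrees with $e_{(\mu(t,x),\infty)}(|x|)$ at the points where $\tau(e_{(\mu(t,x),\infty)}(|x|))=t$, and define $J_{|x|}(\chi_{[0,t)})=p_t$; this gives $J_{|x|}(\mu(x))=|x|$ without ever mentioning $g$ or a rearranging map.

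Two smaller points. First, the identification of $\mathcal{A}$ with $L_\infty(0,1)$ via uniqueness of non-atomic probability measure algebras requires $\mathcal{A}$ to be \emph{separable}, and a MASA of a non-separable $\mathcal{M}$ need not be; since $W^\ast(|x|)$ is singly generated, you should enlarge it to a separable non-atomic abelian subalgebra rather than a full MASA. Second, the ``main obstacle'' you flag --- transfinitely splitting atomic spectral projections while keeping the enlargement abelian --- is not actually needed for the MASA route: a MASA $\mathcal{A}$ of a non-atomic finite von Neumann algebra is automatically non-atomic, because an atom $p\in\mathcal{A}$ would satisfy $pap=\lambda_a p$ for every $a\in\mathcal{A}$, so any proper subprojection $q<p$ of $\mathcal{M}$ (which exists by non-atomicity of $\mathcal{M}$) would commute with all of $\mathcal{A}$, contradicting maximality.
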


We now introduce the free Kruglov operator defined in \cite{SZ} based on a very elegant construction given in \cite{NS2}. 
\begin {definition} Let $L_\infty(\mathbb{F}_\infty)=\M_0\star\mathcal{M}_1$ with $\M_i\simeq L_\infty(\mathbb{F}_\infty)$ for $i=0,1.$ Let $w\in \M_1$ be a semi-circular random variable, that is a random variable with distribution function supported on the interval $(-2,2)$ and with the density $\frac{1}{2\pi}(4-t^2)^{1/2}dt. $ Select a trace preserving isometric embedding of $S(0,1)$ into $S^h(\M_0,\tau).$ Define the Kruglov operator $K_{\rm free}:S(0,1)\rightarrow S(\mathbb{F}_\infty,\tau)$ as the restriction of the map $x\rightarrow wxw$ (from $S(\mathbb{F}_\infty,\tau)$ to $S(\mathbb{F}_\infty,\tau)$) to subalgebra $S(0,1)$ of the $S(\mathbb{F}_\infty,\tau).$
\end{definition}

We have the following important result; see  \cite[Corollary 1.8]{NS2} and \cite[Theorem 22]{SZ}.
\begin{theorem}\label{DF}
 If $x_k\in S(0,1)$, $1\leq k \leq n$, are disjointly supported functions, then $K_{\rm free}x_k$, $1\leq k \leq n$, are freely independent random variables.
\end{theorem}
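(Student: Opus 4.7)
My plan is to verify the moment characterization of free independence for $\{y_k := K_{\rm free}(x_k) = wx_kw\}_{k=1}^n$: for every alternating index sequence $i_1 \neq i_2 \neq \cdots \neq i_r$ and polynomials $P_1,\ldots,P_r$ with $\tau(P_l(y_{i_l})) = 0$ for each $l$, I aim to show $\tau(P_1(y_{i_1}) P_2(y_{i_2}) \cdots P_r(y_{i_r})) = 0$. Expanding each $P_l$ in monomials reduces the task to analyzing the mixed moments $\tau(y_{i_1}^{m_1} y_{i_2}^{m_2} \cdots y_{i_r}^{m_r})$ in relation to the single-factor moments $\tau(y_{i_l}^{m_l})$.

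Let $p_k \in \M_0$ denote the support projection of $x_k$; disjointness of supports forces $p_jp_k = 0$ for $j\neq k$ and $x_k = p_k x_k p_k$. The product $y_{i_1}^{m_1} \cdots y_{i_r}^{m_r}$ unfolds into an alternating word $wb_1wb_2\cdots wb_{N-1}w$ with $N = 2(m_1+\cdots+m_r)$, whose entries $b_j \in \M_0$ consist of copies of $x_{i_l}$ grouped in contiguous blocks of length $m_l$ at odd positions and units at even positions. Since $w \in \M_1$ is standard semicircular and freely independent from $\M_0$, the standard trace formula applies:
$$\tau(wb_1wb_2\cdots wb_{N-1}w) = \sum_{\pi \in NC_2(N)} \tau_\pi(b_1,\ldots,b_{N-1}),$$
where $\tau_\pi$ is evaluated as a product of traces of ordered products of the $b_j$'s grouped by the immediate interiors of the pairs of $\pi$ (together with the outermost region).

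I next separate $NC_2(N)$ into block-respecting partitions, whose every pair lies entirely within the $w$-positions of a single factor $y_{i_l}^{m_l}$, and block-crossing partitions. Block-respecting partitions factorize and sum to $\prod_l \tau(y_{i_l}^{m_l})$; substituting this back into $\tau(P_1(y_{i_1}) \cdots P_r(y_{i_r}))$ produces $\prod_l \tau(P_l(y_{i_l})) = 0$ by the centering hypotheses. For each block-crossing $\pi$, I would show that some region of $\pi$ contains $b_j$'s originating from two distinct blocks $l \neq l'$; their ordered product then carries the factor $p_{i_l}p_{i_{l'}} = 0$, killing $\tau_\pi$. Summing over all $\pi$ delivers the desired vanishing.

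The main obstacle is the combinatorial verification of this last step: one must prove that for any block-crossing $\pi$, the nested non-crossing structure cannot confine every region to a single block. The plan is to induct on the nesting depth of cross-boundary pairs: for a cross-boundary pair $(a,a')$ of minimal span, all pairs nested strictly inside are necessarily within-block by minimality, so after accounting for these the ordered product associated with the immediate interior of $(a,a')$ still contains entries from both adjacent blocks, forcing the desired $p_{i_l}p_{i_{l+1}} = 0$ annihilation. Once this invariant is in place and the boundary cases (innermost pairs adjoining the word endpoints) are dispatched, the block-crossing contributions vanish identically and the freeness of $\{K_{\rm free}(x_k)\}_{k=1}^n$ follows.
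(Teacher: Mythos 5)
The paper does not give its own proof of Theorem \ref{DF}; it cites \cite[Corollary 1.8]{NS2} and \cite[Theorem 22]{SZ}, so there is no internal argument to compare against. Your plan — verifying the moment criterion for freeness via the semicircular Wick expansion — is a plausible route, but the central combinatorial step fails. Block-crossing pairings need not satisfy $\tau_\pi=0$: the annihilation $p_{i_l}p_{i_{l'}}=0$ requires $i_l\neq i_{l'}$, whereas freeness must be checked for alternating sequences $i_1\neq i_2\neq\cdots\neq i_r$ in which \emph{non-consecutive} indices may coincide. Take $r=3$, all $m_l=1$, $i_1=i_3=1$, $i_2=2$, so the word is $y_1y_2y_1$ with $N=6$ and $b_1=x_1$, $b_2=1$, $b_3=x_2$, $b_4=1$, $b_5=x_1$. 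The block-crossing pairing $\pi=\{(1,6),(2,5),(3,4)\}$ has Kreweras regions $\{3\}$, $\{2,4\}$, $\{1,5\}$, giving $\tau_\pi=\tau(x_2)\cdot\tau(1)\cdot\tau(x_1x_1)=\tau(x_2)\tau(x_1^2)$, which is generically nonzero. Moreover this term is indispensable: summing over all of $NC_2(6)$ yields $\tau(y_1y_2y_1)=\tau(x_1)^2\tau(x_2)+\tau(x_1^2)\tau(x_2)=\tau(y_1^2)\tau(y_2)$, precisely what freeness demands, while the block-respecting contribution alone gives only $\tau(y_1)\tau(y_2)\tau(y_1)=\tau(x_1)^2\tau(x_2)$. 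So the theorem is true, but not because block-crossing pairings vanish.

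Your inductive repair also stalls at the first step in this example. The minimal-span cross-boundary pair is $(2,5)$, and its immediate interior $\{b_2,b_4\}=\{1,1\}$ consists only of boundary units — it does not ``contain entries from both adjacent blocks'' and carries no support projections to annihilate. The region that genuinely straddles two blocks, $\{b_1,b_5\}$ from the pair $(1,6)$, joins the non-adjacent blocks $1$ and $3$, which share the same index, so $p_{i_1}p_{i_3}=p_1\neq 0$. Both defects come from the same source: your dichotomy is phrased in terms of the block labels $l\neq l'$, but the disjointness hypothesis only kills products with $i_l\neq i_{l'}$. A correct argument along these lines would have to track how block-crossing and block-respecting contributions conspire across all of $NC_2(N)$ (and across different monomial tuples $(m_1,\dots,m_r)$ after expanding the $P_l$) to reproduce the free mixed-moment recursion; alternatively, and more cleanly, one computes the mixed free cumulants of the $wx_kw$ and shows they vanish when the supports of the $x_k$ are orthogonal, which is the mechanism behind \cite[Corollary 1.8]{NS2}.
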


We need the following technical estimate for the Kruglov operator. The following lemma coincides with \cite[Proposition 25]{SZ}.

\begin{lemma}\label{KB}
 For each positive $x\in S(0,1)$, we have 
$$\frac{2}{5}\sigma_{1/20}\mu(x)\leq \mu(K_{\rm free}x)\leq4\mu(x).$$
\end{lemma}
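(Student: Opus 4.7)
The statement cleanly splits into an easy half and a substantive half, and I would treat them separately.

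For the upper bound, since $w$ is semi-circular with distribution supported on $(-2,2)$, we have $\|w\|_\infty=2$. The general estimate $\mu(t,byc)\leq\|b\|_\infty\|c\|_\infty\mu(t,y)$ (valid for $b,c\in\mathcal{M}$ and $y\in S(\mathcal{M},\tau)$, and proved via $c^*y^*b^*byc\leq\|b\|_\infty^2 c^*y^*yc\leq\|b\|_\infty^2\|c\|_\infty^2 y^*y$) then yields $\mu(K_{\rm free}x)=\mu(wxw)\leq 4\mu(x)$ immediately.

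For the lower bound, the plan is to exploit freeness via a spectral projection of $w$. Fix $a\in(0,2)$ and set $p:=e_{[a^2,4]}(w^2)$, a projection in the von Neumann subalgebra generated by $w$ and hence free from $x\in\mathcal{M}_0$. The density of $w$ lets us compute $\tau(p)$ explicitly as a function of $a$, and we will tune $a$ at the end. On the range of $p$ one has $w^2\geq a^2 p$, so
$$\mu(wxw)=\mu(x^{1/2}w^2x^{1/2})\geq a^2\,\mu(x^{1/2}px^{1/2})=a^2\,\mu(pxp),$$
where the first equality uses that $(x^{1/2}w)^*(x^{1/2}w)$ and $(x^{1/2}w)(x^{1/2}w)^*$ share non-zero spectrum, and the last one uses the same trick for $xp$. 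The task is thus reduced to bounding $\mu(pxp)$ from below in terms of $\mu(x)$ when $p$ and $x\geq 0$ are freely independent.

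The core step is a compression estimate: for a projection $p$ of trace $s$ free from a positive operator $x$, one has a lower bound of the form $\mu(pxp)\geq c\,\sigma_{s}\mu(x)$ (possibly with a further small dilation factor depending on $s$), which is sanity-checked by the test case $x=1$, where $pxp=p$ and both sides equal $\chi_{(0,s)}$. The cleanest route to such an inequality is to view $s^{-1}pxp\in p\mathcal{M}p$ as the free compression of $x$ by $p$, whose distribution is prescribed by Voiculescu's $S$-transform formalism, and to extract the desired pointwise lower bound on $\mu(pxp)$ from the known explicit form of the compressed measure; alternatively one can argue directly by testing against spectral projections $e_{(t,\infty)}(x)$ and using freeness to compute $\tau(p e_{(t,\infty)}(x) p e_{(t,\infty)}(x))$ and hence to push enough mass of the distribution of $pxp$ above a threshold determined by $\mu(x)$. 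Either way, combining with the preceding inequality gives $\mu(wxw)\geq a^2 c\,\sigma_s\mu(x)$, and choosing $a$ so that $\tau(p)$ and $a^2$ produce the explicit constants $1/20$ and $2/5$ closes the argument. The genuine obstacle is this compression lower bound: the upper bound $\mu(pxp)\leq\mu(x)$ is trivial for any projection, but the matching lower bound is intrinsically a freeness statement and is by far the most delicate ingredient.
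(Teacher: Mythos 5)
The paper does not prove this lemma: it is quoted verbatim from \cite[Proposition 25]{SZ}, so there is no in-text proof to compare against. Judged on its own terms, your upper-bound argument is correct and complete: $\mu(wxw)\leq\|w\|_\infty^2\mu(x)=4\mu(x)$ follows from the standard submultiplicativity of singular values and the fact that the semicircle distribution is supported in $[-2,2]$.

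The lower bound, however, contains a genuine gap, and you acknowledge it yourself. The entire weight of the argument is placed on the compression estimate $\mu(pxp)\geq c\,\sigma_{s'}\mu(x)$ for a projection $p$ free from $x$ with $\tau(p)=s$, and that estimate is asserted, not proved. Neither of your two suggested routes is routine: the $S$-transform determines the compressed law only implicitly, and extracting a \emph{pointwise} lower bound on the decreasing rearrangement from it is not a matter of reading off a ``known explicit form''; the alternative of computing $\tau\big(pe_{(t,\infty)}(x)pe_{(t,\infty)}(x)\big)$ gives second-moment information, from which one does not immediately get a pointwise bound on $\mu(pxp)$. Moreover, your sanity check at $x=1$ is misleading: it suggests the constant $c$ can be taken close to $1$, whereas testing with $x$ equal to a projection of small trace $t$ free from $p$ shows, from the explicit distribution of $pxp$ (a.c.\ part concentrated near $s$, supported on a set of measure $t$), that any universal constant must satisfy $c\leq s=\tau(p)$. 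Once this constraint is imposed, the quantity $a^{2}\,\tau\!\big(e_{[a^2,4]}(w^2)\big)$ that governs your final bound is at most about $0.41$ (attained near $a\approx1.2$), so the stated constants $\tfrac25$ and $\tfrac1{20}$ are at the very edge of what this route could conceivably produce, and would be lost if the compression estimate holds only with $c$ strictly smaller than $s$. In short, the outline is plausible but the decisive ingredient is missing and the constant bookkeeping is not shown to close; to make this a proof you would need to state and prove the free compression lower bound with explicit constants and then verify that the arithmetic with the Marchenko--Pastur tail actually yields $\tfrac25\sigma_{1/20}$.
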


The following definition of free Poisson random variable is taken from p.35 in \cite{VDN}. The probability measure $m_u$ given by the formula
\begin{equation}\label{frpodef}
dm_u(t)=\frac1{2\pi t}\sqrt{4u-(t-1-u)^2}\chi_{((u^{1/2}-1)^2,(u^{1/2}+1)^2)}(t)dt
\end{equation}
defines a free Poisson random variable with a parameter $u.$ The following facts are easy to see from Lemma 24 in \cite{SZ} and from pp. 34-35 in \cite{VDN}, respectively.

\begin{lemma}\label{cumpois} For every $u\in(0,1),$ $K(\chi_{(0,u)})$ is a free Poisson random variable with parameter $u.$
\end{lemma}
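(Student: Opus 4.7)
The plan is to unravel the definition of $K_{\rm free}$ and recognise the resulting operator via a standard free-probabilistic moment computation. By construction, $K_{\rm free}(\chi_{(0,u)}) = wpw$, where $p$ is the image of $\chi_{(0,u)}$ under the trace-preserving embedding of $S(0,1)$ into $S^h(\M_0,\tau)$---so $p$ is a projection with $\tau(p)=u$---and $w\in\M_1$ is a standard semicircular element free from $\M_0$. The lemma thus reduces to identifying the distribution of $wpw$ under these freeness and distribution assumptions.

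First, I would replace moments of $wpw$ by moments of $w^2p$: using the tracial/cyclic property together with $p=p^2$, one obtains
$$
\tau\bigl((wpw)^n\bigr) = \tau\bigl((w^2 p)^n\bigr),\qquad n\geq 1.
$$
Second, I would invoke the two cited facts. On the one hand, $w^2$ has the Marchenko--Pastur (free Poisson of parameter $1$) distribution, which is immediate from the fact that $\tau(w^{2n})$ equals the $n$-th Catalan number and is recorded on pp.~34--35 of \cite{VDN}. On the other hand, since $p\in\M_0$ is free from $w\in\M_1$, it is free from $w^2$, and by Lemma~24 of \cite{SZ} (or, equivalently, by multiplicativity of the $S$-transform on free pairs applied to $w^2$ and $p$) the moments $\tau((w^2 p)^n)$ coincide with those of the free Poisson law $\pi_u$ whose continuous part is precisely $dm_u$ from \eqref{frpodef}. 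Since the resulting measure is compactly supported, the moment sequence determines it uniquely; the atom of mass $1-u$ at $0$ (present when $u<1$) is forced by $\tau(\chi_{\{0\}}(wpw)) \geq 1-\tau(p) = 1-u$.

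The only substantive obstacle is the identification in the second step---the free-probabilistic computation that $w^2 p$ has the free Poisson distribution $\pi_u$. This is the standard ``free compression of Marchenko--Pastur by a free projection'' calculation; since the authors point directly to the two references needed, the proof essentially consists of assembling their outputs together with the minor bookkeeping at the atom at $0$.
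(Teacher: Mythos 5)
Your argument is correct, but it travels a genuinely different road than the paper's. The paper passes through the \emph{additive} calculus: it invokes \cite[Lemma~24]{SZ} to get that every free cumulant of $K_{\rm free}(\chi_{(0,u)})$ equals $u$, reads off the $R$-transform $R(z)=u/(1-z)$, matches this with the $R$-transform of the free Poisson law from \cite{VDN}, and concludes that the moments (hence, via characteristic functions, the distributions) agree. You instead unravel $K_{\rm free}(\chi_{(0,u)})=wpw$ explicitly, use traciality to reduce to $\tau\bigl((w^2p)^n\bigr)$, recognise $w^2$ as Marchenko--Pastur of parameter $1$, and then use the \emph{multiplicative} calculus (multiplicativity of the $S$-transform for the free pair $(w^2,p)$, i.e.\ the ``free compression of MP$(1)$ by a free projection'' picture) to identify the law as $\pi_u$. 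Both routes are sound; yours is somewhat more self-contained in that it relies only on standard facts about semicircular squares and free multiplicative convolution rather than on the specific cumulant computation of \cite[Lemma~24]{SZ}, while the paper's is shorter given that cited lemma. One small inaccuracy in your write-up: \cite[Lemma~24]{SZ}, as the paper uses it, records the \emph{cumulants} $\kappa_m(K_{\rm free}(\chi_{(0,u)}))=u$, not the moments of $w^2p$, so citing it ``or, equivalently'' alongside the $S$-transform step is a slight mismatch --- the $S$-transform argument is the part that actually does the work in your version. Also, once you know the moments agree and the support is compact, uniqueness of the moment problem already pins down the atom at $0$, so the extra observation $\tau(\chi_{\{0\}}(wpw))\geq 1-u$ is a helpful sanity check but not a logically necessary step.
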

\begin{proof} Lemma 24 in \cite{SZ} states that all free cumulants of $K(\chi_{(0,u)})$ are the same, namely, $\kappa_m(K(\chi_{(0,u)}))=u.$ By the definition of the $R-$transform (see Theorem 3.3.1 in \cite{VDN}) as applied to $K(\chi_{(0,u)}),$ we have $R_{\mu}(z)=\frac{u}{1-z}.$ This is exactly the $R-$transform of a free Poisson random variable with parameter $u$ as given on p.35 in \cite{VDN}. By Theorem 3.3.1 in \cite{VDN}, moments of $K(\chi_{(0,u)})$ coincide with that of a free Poisson random variable with parameter $u.$ So do their characteristic functions and, hence, their distributions.
\end{proof}

\begin{lemma}\label{frsum} If $\{\xi_k\}_{k=1}^n$ is a sequence of freely independent free Poisson random variables with parameters $\{u_k\}_{k=1}^n,$ then $\sum_{k=1}^n\xi_k$ is a free Poisson random variable with parameter $\sum_{k=1}^nu_k.$
\end{lemma}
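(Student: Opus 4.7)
The plan is to invoke the additivity of the $R$-transform under free convolution, which is the content of Theorem~3.3.1 in \cite{VDN}. Since $\xi_1,\ldots,\xi_n$ are freely independent, one has
\begin{equation*}
R_{\sum_{k=1}^n\xi_k}(z)=\sum_{k=1}^n R_{\xi_k}(z).
\end{equation*}
On the other hand, a free Poisson random variable with parameter $u>0$ has $R$-transform equal to $u/(1-z)$; see p.~35 of \cite{VDN}. Substituting, we obtain
\begin{equation*}
R_{\sum_{k=1}^n\xi_k}(z)=\sum_{k=1}^n\frac{u_k}{1-z}=\frac{\sum_{k=1}^n u_k}{1-z},
\end{equation*}
which is precisely the $R$-transform of a free Poisson random variable with parameter $\sum_{k=1}^n u_k$. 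Because the $R$-transform uniquely determines the distribution (again by Theorem~3.3.1 of \cite{VDN}), the claim follows.

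There is no real obstacle here: the whole point is to quote the right ingredients from \cite{VDN}, and no delicate estimates are required. If one prefers to avoid the analytic machinery of the $R$-transform, an equivalent combinatorial argument goes through free cumulants: the computation already carried out in the proof of Lemma~\ref{cumpois} (originating from Lemma~24 in \cite{SZ}) shows that every free cumulant of a free Poisson$(u)$ random variable equals $u$, and free cumulants of sums of freely independent variables add, so $\kappa_m(\sum_k\xi_k)=\sum_k u_k$ for all $m\ge 1$, which identifies $\sum_k\xi_k$ as free Poisson with parameter $\sum_k u_k$. A third option, valid only when $\sum_k u_k\le 1$, is to pick disjoint subsets $A_k\subset(0,1)$ with $|A_k|=u_k$ and apply the linearity of $K_{\rm free}$ together with Lemma~\ref{FS}, Theorem~\ref{DF} and Lemma~\ref{cumpois} to the identity $\sum_k K_{\rm free}(\chi_{A_k})=K_{\rm free}(\chi_{\sqcup_k A_k})$; this is strictly less general, which is why the $R$-transform route is preferred.
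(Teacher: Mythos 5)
Your proof is correct and takes essentially the same route as the paper: the paper argues through additivity of free cumulants (Proposition 12.3 of \cite{NS1}) and then ``repeats the argument in Lemma~\ref{cumpois},'' which is precisely the $R$-transform identification you carry out explicitly, while your alternative cumulant argument is the paper's primary phrasing. The two presentations are interchangeable since the $R$-transform is just the generating function of the free cumulants.
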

\begin{proof} Lemma \ref{cumpois} states that all free cumulants of $K(\xi_k)$ are the same, namely, $\kappa_m(\xi_k)=u_k.$ By Proposition 12.3 in \cite{NS1}, we have that $\kappa_m(\sum_{k=1}^n\xi_k)=\sum_{k=1}^nu_k.$ Repeating the argument in Lemma \ref{cumpois}, we conclude the proof.
\end{proof}

\section{$\Phi$-moment inequalities for the classical independence}
In this section, we prove Theorem \ref{main results 1}, Theorem \ref{CM} and Theorem \ref{sharp} \eqref{sharpa}. We begin with the following $\Phi$-moment version of Johnson-Schechtman inequality in the commutative case.


%

\begin{theorem}\label{positive js rhs}
Suppose that $\Phi$ is an Orlicz function satisfying $\Delta_2$-condition.
If $\{f_k\}_{k=1}^n \subset L_{\Phi}[0,1],$ $n\in\mathbb{N}$ is a sequence of independent random variables, then
\begin{equation}
\mathbb{E}\Big(\Phi\big(\sum_{k=1}^n {f_k}\big)\Big)\leq C_\Phi\Big[\mathbb{E}\Big(\Phi \big(\mu(f)\chi_{(0,1)} \big)\Big)+\Phi\Big(\|f\|_1\Big)\Big],\quad f=\bigoplus_{k=1}^n f_k.
\end{equation}
\end{theorem}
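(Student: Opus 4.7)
The plan is a truncation-plus-Kruglov-operator argument.

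Set the threshold $c := \mu(1,|f|)$, where $f=\bigoplus_{k=1}^n f_k$. This choice guarantees both $\sum_{k=1}^n \mathbb{P}(|f_k|>c) = \lambda(c,|f|) \leq 1$ and $c \leq \int_0^1 \mu(t,f)\,dt \leq \|f\|_1$. Decomposing each summand as $f_k = f_k' + f_k''$ with $f_k' := f_k\chi_{\{|f_k|>c\}}$ and $f_k'' := f_k\chi_{\{|f_k|\leq c\}}$, the $\Delta_2$-condition \eqref{ineq13} reduces the proof to bounding $\mathbb{E}\Phi(\sum_k f_k')$ and $\mathbb{E}\Phi(\sum_k f_k'')$ separately.

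For the large part, the choice of $c$ places the disjoint sum $g := \bigoplus_{k=1}^n f_k'$ inside $S(0,1)$, so the Kruglov operator $K_{\rm class}$ acts on it. The central estimate is the $\Phi$-moment Prokhorov inequality
$$\mathbb{E}\Phi\Big(\sum_{k=1}^n f_k'\Big) \leq C_\Phi\,\mathbb{E}\Phi\big(K_{\rm class}g\big);$$
via Theorem \ref{DFclass}, $K_{\rm class}g = \sum_k K_{\rm class}f_k'$ is an independent sum of Poissonized random variables, and the inequality expresses the classical domination of an independent sum by its Poissonized counterpart. Combining this with the $\Phi$-moment boundedness of $K_{\rm class}$---$\mathbb{E}\Phi(K_{\rm class}h) \leq C_\Phi\mathbb{E}\Phi(h)$, obtained by conditioning on the atoms $A_n$ in \eqref{definitionofKclass} and applying \eqref{ineq13} iteratively within each conditional block (the resulting series in $n$ converging by the ratio test)---and with the pointwise majorization $\mu(g) \leq \mu(f)\chi_{(0,1)}$ (since $|g|\leq|f|$ and $g$ is supported on a set of measure at most one), we obtain $\mathbb{E}\Phi(\sum_k f_k') \leq C_\Phi\mathbb{E}\Phi(\mu(f)\chi_{(0,1)})$.

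For the small part, split off the means: $\sum_k f_k'' = \sum_k \mathbb{E}f_k'' + \sum_k(f_k'' - \mathbb{E}f_k'')$. Applying \eqref{ineq13} gives
$$\mathbb{E}\Phi\Big(\sum_k f_k''\Big) \leq C_\Phi\Phi\Big(\sum_k |\mathbb{E}f_k''|\Big) + C_\Phi\mathbb{E}\Phi\Big(\Big|\sum_k(f_k'' - \mathbb{E}f_k'')\Big|\Big),$$
and $\sum_k|\mathbb{E}f_k''| \leq \|f\|_1$ makes the mean term at most $C_\Phi\Phi(\|f\|_1)$. The centered summands are independent, mean-zero, bounded by $2c$, with total variance at most $c\|f\|_1$. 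A Bernstein-type tail bound, integrated against $\Phi'$ and combined with $\Delta_2$, then yields
$$\mathbb{E}\Phi\Big(\Big|\sum_k(f_k'' - \mathbb{E}f_k'')\Big|\Big) \leq C_\Phi\Phi\big(\sqrt{c\|f\|_1}\big) \leq C_\Phi\Phi(\|f\|_1),$$
the last inequality using $c\leq\|f\|_1$.

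The main obstacle is the $\Phi$-moment Prokhorov inequality in the large-part step. Moment-generating-function domination between an independent sum and its Poissonization is elementary for positive summands, but upgrading it to a $\Phi$-moment inequality for signed summands under only the $\Delta_2$ assumption requires either a careful distribution-function comparison or an explicit coupling, and this is where the main technical work is concentrated.
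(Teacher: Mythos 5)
Your overall decomposition is exactly the paper's: split at the threshold $c=\mu(1,f)$, handle the two pieces separately via $\Delta_2$. The divergences are in how each piece is treated, and in one step you acknowledge a gap.

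For the large part, your ``$\Phi$-moment Prokhorov inequality'' $\mathbb{E}\Phi\bigl(\sum_k f_k'\bigr)\leq C_\Phi\,\mathbb{E}\Phi(K_{\rm class}g)$ is precisely the crux, and you leave it open, flagging it as the main obstacle. The paper closes this gap via submajorization rather than any coupling or explicit distribution comparison: Lemma \ref{kmaj} shows $f_k'\prec K_{\rm class}f_k'$ for each positive $f_k'$; Lemma \ref{pacific lemma} (Lemma 13 of \cite{pacific}) shows that $\prec$ is preserved by independent sums of positive summands, so $\sum_k f_k'\prec\sum_k h_k$ with $\{h_k\}$ independent copies of $K_{\rm class}f_k'$; and Theorem \ref{DFclass} identifies $\mu\bigl(\sum_k h_k\bigr)=\mu\bigl(K_{\rm class}g\bigr)$. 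Because $\prec$ implies $\mathbb{E}\Phi(\cdot)$-monotonicity for any Orlicz $\Phi$, this actually yields the Prokhorov bound with constant $1$, which is cleaner than the $C_\Phi$ you posit. Your derivation of $\mathbb{E}\Phi(K_{\rm class}h)\leq C_\Phi\,\mathbb{E}\Phi(h)$ by conditioning on the atoms $A_n$ and iterating \eqref{ineq13} is correct and matches Lemma \ref{phik}. This is where \emph{all} the technical content lives, so you should recognize that the majorization lemmas (\ref{kmaj}, \ref{pacific lemma}) do the work you attribute to a ``careful distribution-function comparison.''

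For the small part, your route is genuinely different and valid. You center the summands, absorb $\sum_k|\mathbb{E}f_k''|\leq\|f\|_1$ into the mean term, and control the centered sum by Bernstein, then integrate the tail against $\Phi'$. This works: under $\Delta_2$ one has $\Phi'(t)\leq C_\Phi\Phi(t)/t$ and $\Phi(st)\leq s^q\Phi(t)$ for some finite $q$ and $s\geq 1$, so the exponential tails dominate the polynomial growth of $\Phi$ and the dyadic integration closes with $\mathbb{E}\Phi(|S|)\leq C_\Phi\Phi\bigl(\sqrt{c\|f\|_1}\bigr)\leq C_\Phi\Phi(\|f\|_1)$ using $c\leq\|f\|_1$. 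The paper instead uses Lemma \ref{norm}: each bounded positive $f_{k,2}$ is majorized by $A\chi_{[0,A^{-1}\alpha_k]}$ (or $\alpha\chi_{[0,\alpha^{-1}\alpha_k]}$), another application of Lemma \ref{pacific lemma} transfers this to the sums, and then Lemma \ref{phimoment} is re-applied to these indicators. The paper's route has the advantage of staying inside the majorization framework (no mean-splitting, no Bernstein), and that framework is what later transfers to the free setting (Section 4, via Lemmas \ref{sssa} and \ref{sssb}); your Bernstein argument would not port there directly.

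One more point: despite the theorem's statement omitting the word ``positive,'' the label and its role in Theorem \ref{main results 1}(i) make clear that the $f_k$ are assumed positive, and positivity is used essentially (Lemma \ref{pacific lemma} requires positive summands; the bound $\Phi(\|f\|_1)$ on the right-hand side is specific to positive families and is replaced by $\Phi(\|f\|_{L_1+L_2})$ in the symmetric case, Theorem \ref{sym js rhs}). Your writing $|f_k|$ in the truncation suggests you are hedging toward signed $f_k$, but the result as stated does not hold for general signed summands.
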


The following lemmas are essential ingredients in the proof of Theorem \ref{positive js rhs}.

\begin{lemma}\label{kmaj} For every positive $f\in L_1(0,1),$ we have $f\prec K_{\rm class}f.$
\end{lemma}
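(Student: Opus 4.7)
The plan is to verify the two ingredients needed for $f \prec K_{\rm class}f$: (a) equality of means $\mathbb{E}(f) = \mathbb{E}(K_{\rm class}f)$; and (b) the family of inequalities $\mathbb{E}((f-c)_+) \leq \mathbb{E}((K_{\rm class}f-c)_+)$ for every $c \geq 0$, which together with (a) yields the pointwise domination $\int_0^t \mu(s,f)\,ds \leq \int_0^t \mu(s,K_{\rm class}f)\,ds$ for all $t > 0$ by the standard Hardy--Littlewood--P\'olya characterization of majorization (for $c < 0$ the desired inequality is immediate from (a) and nonnegativity).

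For (a), expanding \eqref{definitionofKclass} and exploiting that $\omega_0$ is independent of $(\omega_k)_{k\ge 1}$ gives
$$\mathbb{E}(K_{\rm class}f) = \sum_{n=1}^\infty n\cdot \mathbb{E}(f)\cdot \mathbb{P}(A_n) = \mathbb{E}(f)\cdot \sum_{n=1}^\infty \frac{n}{e\cdot n!} = \mathbb{E}(f),$$
since $\sum_{n=1}^\infty n/(e\cdot n!) = 1$ (the mean of a Poisson(1) variable).

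For (b), the key observation is that every convex $\Phi:[0,\infty)\to[0,\infty)$ with $\Phi(0)=0$ is \emph{superadditive}: $\Phi(x_1+\cdots+x_n)\geq \sum_{k=1}^n \Phi(x_k)$ for $x_1,\dots,x_n\geq 0$. This follows from convexity together with $\Phi(0)=0$, since $\Phi(x_k)=\Phi\bigl(\tfrac{x_k}{S}S+\tfrac{S-x_k}{S}\cdot 0\bigr)\leq \tfrac{x_k}{S}\Phi(S)$ with $S=x_1+\cdots+x_n$; summing in $k$ yields the claim. Applied pointwise to $x_k=f(\omega_k)$ on the event $\{\omega_0\in A_n\}$, where $K_{\rm class}f=\sum_{k=1}^n f(\omega_k)$ is a sum of i.i.d.\ copies of $f$, and using that $K_{\rm class}f\equiv 0$ on $A_0$ (so the $n=0$ term drops out as $\Phi(0)=0$), I obtain
$$\mathbb{E}(\Phi(K_{\rm class}f)) = \sum_{n=1}^\infty \mathbb{P}(A_n)\,\mathbb{E}\Phi\Bigl(\sum_{k=1}^n f(\omega_k)\Bigr) \geq \sum_{n=1}^\infty \mathbb{P}(A_n)\cdot n\cdot \mathbb{E}(\Phi(f)) = \mathbb{E}(\Phi(f)),$$
again using the Poisson identity $\sum_n n\mathbb{P}(A_n)=1$. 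Specialising to $\Phi(t)=(t-c)_+$ for each $c\geq 0$ (which is a convex function on $[0,\infty)$ vanishing at $0$) yields (b), and we conclude $f \prec K_{\rm class}f$.

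The argument is essentially bookkeeping; the only substantive ingredient is superadditivity of convex functions with $\Phi(0)=0$, and the main (mild) subtlety is the clean matching of the Poisson normalisation $\sum n\mathbb{P}(A_n)=1$ with both the mean computation and the $\Phi$-moment estimate.
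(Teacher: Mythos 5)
Your proof is correct, and it takes a genuinely different route from the paper. The paper's proof rewrites $K_{\rm class}f$ as a disjoint sum $\bigoplus_{n\geq 1}\chi_{A_n}\otimes\bigl(\sum_{k=1}^n 1^{\otimes(k-1)}\otimes f\otimes 1^\infty\bigr)$, invokes an external majorization lemma (Lemma 3.3.7 in \cite{LSZ}) to get $f^{\oplus n}\prec\sum_{k=1}^n 1^{\otimes(k-1)}\otimes f\otimes 1^\infty$ for each $n$, invokes a second external lemma (\cite[Lemma 2.3]{CDS}) to pass to disjoint sums over $n$, and finally identifies $\bigoplus_{n}\chi_{A_n}\otimes f^{\oplus n}$ as equimeasurable with $f$ via the Poisson normalization $\sum_n n\,\mathbb{P}(A_n)=1$. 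You instead avoid the disjointification machinery entirely: superadditivity of convex functions vanishing at zero gives $\mathbb{E}\Phi(K_{\rm class}f)\geq\mathbb{E}\Phi(f)$ for every such $\Phi$ directly (with the same Poisson normalization closing the computation), and then specializing to $\Phi=(\cdot-c)_+$ together with the equality of $L_1$-norms recovers $f\prec K_{\rm class}f$ via the integral (Hardy--Littlewood--P\'olya) characterization of majorization. Your route is more self-contained and arguably gives more: it establishes the stronger $\Phi$-moment inequality $\mathbb{E}\Phi(f)\leq\mathbb{E}\Phi(K_{\rm class}f)$ for all Orlicz $\Phi$ at once, not merely the special case $(\cdot-c)_+$, whereas the paper's proof is designed to slot into the disjointification toolkit used throughout the rest of the paper. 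Both are valid; note only that the concluding step relies on the standard but nontrivial equivalence between $\prec$ and the family of $(\cdot-c)_+$-inequalities on equal-measure spaces, which you correctly invoke but do not prove.
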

\begin{proof} Using the identification between measure spaces $((0,1), \mathbb{P})$ and $(\Omega, \mathbb{P})$, we  rewrite the definition \eqref{definitionofKclass} as follows
$$K_{\rm class}f=\bigoplus_{n=1}^{\infty}\chi_{A_n}\otimes\Big(\sum_{k=1}^n 1^{\otimes (k-1)}\otimes f\otimes 1^{\infty}\Big).$$
Since $f\ge 0$ by the assumption, we easily infer from \cite[Lemma 3.3.7]{LSZ} that
$$f^{\oplus n}\prec\sum_{k=1}^n 1^{\otimes (k-1)}\otimes f\otimes 1^{\infty}.$$
By \cite[Lemma 2.3 ]{CDS}, we have 
$$\bigoplus_{n\geq 1}g_n\prec\prec\bigoplus_{n\geq1}h_n,\quad {\rm  if} \quad  g_n\prec\prec h_n, n\geq1.$$
Applying the above to sequences $\{\chi_{A_n}\otimes f^{\oplus n}\}_{n=1}^\infty$ and $\{\chi_{A_n}\otimes\Big(\sum_{k=1}^n 1^{\otimes (k-1)}\otimes f\otimes 1^{\infty}\Big)\}_{n=1}^\infty$, we arrive at
$$\bigoplus_{n=1}^{\infty}\chi_{A_n}\otimes f^{\oplus n}\prec K_{\rm class}f.$$
It remains to observe that the element standing on the left hand side above is equimeasurable with $f$. Indeed, fix an arbitrary scalar $\lambda>0$. We have 
$$
\mathbb{P}\{\chi_{A_n}\otimes f^{\oplus n} >\lambda\}=n\cdot \mathbb{P}(A_n)\mathbb{P}\{f >\lambda\},\quad n \ge 1.
$$
Recalling that $\mathbb {P}(A_n)=\frac{1}{e\cdot n!},\ n\ge 0$ we see that 
$$
\mathbb{P}\{\bigoplus_{n=1}^{\infty}\chi_{A_n}\otimes f^{\oplus n} >\lambda\}=\mathbb{P}\{f >\lambda\}.
$$
This completes the proof.
\end{proof}

Lemma below strengthens Theorem \ref{Kruglov} \eqref{krb}. We note in passing that if $\Delta_2$-condition is replaced by the condition from Theorem \ref{Kruglov} \eqref{kra}, then the following lemma  still holds; however we do not pursue this case here.

\begin{lemma}\label{phik} Suppose that $\Phi$ is an Orlicz function satisfying $\Delta_2$-condition. There exists a constant $C_{\Phi}$ such that for every positive random variable $f$ we have
$$\mathbb{E}(\Phi(K_{\rm class}f))\leq C_{\Phi}\mathbb{E}(\Phi(f)).$$
\end{lemma}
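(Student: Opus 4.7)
The plan is to compute $\mathbb{E}(\Phi(K_{\rm class}f))$ directly from the definition of the Kruglov operator, and bound each resulting term via an iterated application of the $\Delta_2$-condition. Using the identification of $(\Omega,\mathbb{P})$ with $((0,1),\mathbb{P})$, the disjointness of the sets $A_n$, and the independence of the coordinates, the expectation decomposes as
\begin{equation*}
\mathbb{E}(\Phi(K_{\rm class}f))=\sum_{n=1}^{\infty}\mathbb{P}(A_n)\,\mathbb{E}\Big(\Phi\Big(\sum_{k=1}^n f(\omega_k)\Big)\Big)=\frac{1}{e}\sum_{n=1}^{\infty}\frac{1}{n!}\,\mathbb{E}\Big(\Phi\Big(\sum_{k=1}^n f_k\Big)\Big),
\end{equation*}
where $f_1,\dots,f_n$ are independent copies of $f$ (and where we used $\Phi(0)=0$ together with the support of $\chi_{A_n}$).

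Next, I would iterate the $\Delta_2$-type inequality \eqref{ineq13}, namely $\Phi(u+v)\leq \gamma(\Phi(u)+\Phi(v))$, along a binary tree: grouping the $u_k$ in pairs at each level yields
\begin{equation*}
\Phi\Big(\sum_{k=1}^{2^m}u_k\Big)\leq \gamma^m\sum_{k=1}^{2^m}\Phi(u_k),\qquad u_k\geq 0.
\end{equation*}
Choosing $m=\lceil\log_2 n\rceil$ and appending zeros therefore gives a polynomial bound $\Phi(\sum_{k=1}^n u_k)\leq C\,n^{q}\sum_{k=1}^n\Phi(u_k)$ for all $u_k\ge 0$, where $q:=\log_2\gamma$ and $C$ depends only on $\Phi$. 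Applied pointwise to $u_k=f_k(\omega)$ and integrated, and using that $f_1,\dots,f_n$ are identically distributed, this yields
\begin{equation*}
\mathbb{E}\Big(\Phi\Big(\sum_{k=1}^n f_k\Big)\Big)\leq C\,n^{q}\sum_{k=1}^n\mathbb{E}(\Phi(f_k))=C\,n^{q+1}\,\mathbb{E}(\Phi(f)).
\end{equation*}

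Substituting back, one gets
\begin{equation*}
\mathbb{E}(\Phi(K_{\rm class}f))\leq \frac{C}{e}\,\mathbb{E}(\Phi(f))\sum_{n=1}^{\infty}\frac{n^{q+1}}{n!},
\end{equation*}
and the series is plainly convergent (its sum is a finite multiple of $e$, i.e.\ a polynomial in derivatives of $\sum 1/n!$ evaluated at $1$). This yields the required constant $C_\Phi$. The only subtle point is to make the dependence on $n$ in the $\Delta_2$-iteration explicit and polynomial so that it is dominated by $n!$; this is where the dyadic-tree grouping is needed rather than the naïve linear iteration, which would only produce an exponential bound $\gamma^{n-1}$ that, while still summable against $1/n!$, gives a worse constant and obscures the structure. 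Since $f$ appears only through $\mathbb{E}(\Phi(f))$ and $\Phi$ only through the $\Delta_2$-constant $\gamma$, this furnishes the asserted estimate.
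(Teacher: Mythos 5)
Your proof is correct and follows essentially the same route as the paper: expand $\mathbb{E}(\Phi(K_{\rm class}f))$ into the Poisson-weighted series $\frac{1}{e}\sum_{n\geq 1}\frac{1}{n!}\mathbb{E}\big(\Phi(\sum_{k=1}^n f_k)\big)$ via the definition of $K_{\rm class}$, bound each term by iterating the $\Delta_2$-inequality, and observe that the factorials dominate. The one difference is in the iteration: the paper uses the naive linear chain $\Phi(\sum_{k=1}^n s_k)\leq B^{n-1}\sum_{k=1}^n\Phi(s_k)$ — precisely the step you dismiss in your closing remark — and this yields the clean closed-form constant
$\sum_{n\geq 1}\frac{1}{e\cdot n!}B^{n-1}n\,\mathbb{E}(\Phi(f))=e^{B-1}\mathbb{E}(\Phi(f))$.
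Your dyadic-tree grouping $\Phi(\sum_{k=1}^n u_k)\leq C\,n^{q}\sum_{k}\Phi(u_k)$ with $q=\log_2\gamma$ is sharper term-by-term, but the gain is immaterial here since $B^{n-1}$ is already swamped by $n!$; the polynomial bound simply replaces an explicit exponential series by a less explicit one. Both are perfectly valid, and the structure of the argument is identical.
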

\begin{proof} By \eqref{ineq13} we have
$$\Phi(s+t)\leq B(\Phi(s)+\Phi(t)),\quad s,t>0.$$
By induction, we have
$$\Phi(\sum_{k=1}^ns_k)\leq B^{n-1}\sum_{k=1}^n\Phi(s_k),\quad 0\leq s_k,\ 1\leq k\leq n.$$
Therefore,
$$\mathbb{E}\Big(\Phi\big(\sum_{k=1}^n 1^{\otimes (k-1)}\otimes f\otimes 1^{\infty}\big)\Big)\leq B^{n-1}n\mathbb{E}(\Phi(f)).$$
By definition of the Kruglov operator, we have
$$\mathbb{E}\Big(\Phi\big(K_{\rm class}f\big)\Big)=\sum_{n=1}^{\infty}\frac1{e\cdot n!}\mathbb{E}\Big(\Phi\big(\sum_{k=1}^n 1^{\otimes (k-1)}\otimes f\otimes 1^{\infty}\big)\Big)\leq$$
$$\leq\sum_{n=1}^{\infty}\frac1{e\cdot n!}\cdot B^{n-1}n\mathbb{E}(\Phi(f))=e^{B-1}\mathbb{E}(\Phi(f)).$$
\end{proof}

\begin{lemma}\label{phimoment}
Suppose that $\Phi$ is an Orlicz function satisfying $\Delta_2$-condition. If $\{f_k\}_{k=1}^n$ is a sequence of independent random variables from
$L_{\Phi}(0,1)$ such that $\sum_{k=1}^n \mathbb P\{\text{supp} (f_k)\}\leq1$, then\footnote{Note that $\bigoplus_{k=1}^n |f_k|$ lives on the interval $(0,1).$} 
$$\mathbb{E}\Big(\Phi \big(\sum_{k=1}^n f_k\big)\Big) \leq C\mathbb E\Big(\Phi \big(\bigoplus_{k=1}^n f_k\big)\Big).$$
\end{lemma}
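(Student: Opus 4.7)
The plan is to dominate the independent sum $\sum_k f_k$ by the Kruglov transform $K_{\rm class}(\bigoplus_k f_k)$ in the majorization order $\prec$, and then invoke Lemma \ref{phik} to convert this back into a $\Phi$-moment bound on the disjoint sum. Four steps are needed, and they fall into place once the initial reduction to positive variables has been made.

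I would first reduce to the positive case: since $\Phi$ is even and nondecreasing on $[0,\infty)$ and $|\sum_k f_k|\leq \sum_k |f_k|$, one has $\mathbb{E}(\Phi(\sum_k f_k)) \leq \mathbb{E}(\Phi(\sum_k |f_k|))$; the family $\{|f_k|\}$ is again independent with the same supports, and $\bigoplus_k |f_k|$ is equimeasurable with $\bigoplus_k f_k$, so we may assume from now on that $f_k\geq 0$.

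Next, using the hypothesis $\sum_k \mathbb{P}(\text{supp}(f_k)) \leq 1$, I would choose positive disjointly supported $g_k \in S(0,1)$ with $g_k$ equimeasurable with $f_k$, so that $\sum_k g_k = \bigoplus_k f_k$ as an element of $S(0,1)$. Theorem \ref{DFclass} then guarantees that $\{K_{\rm class} g_k\}_{k=1}^n$ is an independent family, and Lemma \ref{kmaj} gives $g_k \prec K_{\rm class} g_k$ for each $k$. Because $\mu(f_k) = \mu(g_k)$ and $\|K_{\rm class} g_k\|_1 = \|g_k\|_1 = \|f_k\|_1$, this upgrades to $f_k \prec K_{\rm class} g_k$ for every $k$.

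Applying Lemma \ref{pacific lemma} to the two positive independent sequences $\{f_k\}$ and $\{K_{\rm class} g_k\}$ then yields $\sum_k f_k \prec \sum_k K_{\rm class} g_k$, and the linearity of $K_{\rm class}$ (visible in formula \eqref{definitionofKclass}) identifies the right-hand side with $K_{\rm class}(\sum_k g_k) = K_{\rm class}(\bigoplus_k f_k)$. Since $\Phi$-moments are monotone under $\prec\prec$ and hence under $\prec$, combining this with Lemma \ref{phik} gives
\[
\mathbb{E}\Big(\Phi\Big(\sum_{k=1}^n f_k\Big)\Big) \leq \mathbb{E}\Big(\Phi\Big(K_{\rm class}\big(\bigoplus_{k=1}^n f_k\big)\Big)\Big) \leq C_\Phi\, \mathbb{E}\Big(\Phi\Big(\bigoplus_{k=1}^n f_k\Big)\Big),
\]
which is the claimed inequality. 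The main delicate point is ensuring that $f_k \prec K_{\rm class} g_k$ holds in the strong $\prec$ sense (not merely $\prec\prec$), as Lemma \ref{pacific lemma} requires; this reduces to $K_{\rm class}$ being $L_1$-norm preserving on nonnegative inputs, which is already built into the conclusion of Lemma \ref{kmaj}.
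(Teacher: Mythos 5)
Your proof is correct and follows essentially the same route as the paper: reduce to positive $f_k$, dominate $f_k$ in the $\prec$ order by (an independent copy of) $K_{\rm class}$ applied to its disjoint representative, sum via Lemma~\ref{pacific lemma}, identify the right-hand side with $K_{\rm class}(\bigoplus_k f_k)$ using Theorem~\ref{DFclass} and linearity, and finish with Lemma~\ref{phik}. The only cosmetic difference is that the paper introduces an auxiliary independent sequence $h_k$ equimeasurable with $K_{\rm class}(f_k)$ and then matches distributions, whereas you work directly with $K_{\rm class}(g_k)$ for disjoint copies $g_k\in S(0,1)$, which is a slightly more streamlined presentation of the same argument.
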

\begin{proof} Without loss of generality, $f_k\geq0,$ $1\leq k\leq n.$ Let $\{h_k\}_{k=1}^n$ be a sequence of independent random variables such that $h_k$ is equimeasurable with $K_{\rm class}(f_k)$ for every $1\leq k\leq n.$ By Lemma \ref{kmaj}, we have $f_k\prec h_k.$ It follows from Lemma \ref{pacific lemma} that
$$\sum_{k=1}^nf_k\prec\sum_{k=1}^n h_k.$$
Therefore, we have
$$\mathbb{E}\Big(\Phi \big(\sum_{k=1}^n f_k\big)\Big) \leq\mathbb{E}\Big(\Phi \big(\sum_{k=1}^n h_k\big)\Big).$$
It follows from Theorem \ref{DFclass} that
$$\mu(\sum_{k=1}^n h_k)=\mu(K_{\rm class}(\bigoplus_{k=1}^nf_k)).$$
Therefore, we have
$$\mathbb{E}\Big(\Phi \big(\sum_{k=1}^n f_k\big)\Big) \leq\mathbb{E}\Big(\Phi \big(K_{\rm class}f\big)\Big),\quad f=\bigoplus_{k=1}^nf_k.$$
The assertion follows now from Lemma \ref{phik}.
\end{proof}

The next lemma is somewhat similar to \cite[Proposition 14]{pacific}.

\begin{lemma}\label{norm}
Suppose that $\Phi$ is an Orlicz function satisfying $\Delta_2$-condition. If $\{f_k\}_{k=1}^n$ is a sequence of uniformly bounded independent random variables, then
$$\mathbb{E}\Big(\Phi\big(\sum_{k=1}^n f_k\big)\Big)\leq C_\Phi\Phi\Big(\|\bigoplus_{k=1}^n f_k\|_{L_1\cap L_{\infty}(0,\infty)}\Big).$$
\end{lemma}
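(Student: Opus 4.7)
Observe that $\|\bigoplus_{k=1}^n f_k\|_{L_1\cap L_\infty(0,\infty)}$ equals (up to an absolute constant) $L:=\max(M,S)$, where $M:=\max_{1\leq k\leq n}\|f_k\|_\infty$ and $S:=\sum_{k=1}^n\|f_k\|_1$; the goal is therefore $\mathbb{E}(\Phi(\sum_k f_k))\leq C_\Phi\Phi(L)$. First I would center: put $m_k:=\mathbb{E}(f_k)$ and $g_k:=f_k-m_k$. Since $|\sum_k m_k|\leq\sum_k\|f_k\|_1\leq L$, one application of \eqref{ineq13} reduces the problem to proving $\mathbb{E}(\Phi(|\sum_k g_k|))\leq C_\Phi\Phi(L)$. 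The variables $g_k$ are independent, centered, satisfy $|g_k|\leq 2L$ almost surely, and obey the variance bound $\sum_k\mathbb{E}(g_k^2)\leq\sum_k\|f_k\|_\infty\|f_k\|_1\leq MS\leq L^2$.

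Next I would invoke the classical Bernstein inequality for sums of independent bounded centered random variables: for every $t>0$,
\begin{equation*}
\mathbb{P}\Bigl(\Bigl|\sum_{k=1}^n g_k\Bigr|>t\Bigr)\leq 2\exp\Bigl(-\frac{t^2}{2L^2+(4L/3)t}\Bigr),
\end{equation*}
so in particular $\mathbb{P}(|\sum_k g_k|>t)\leq 2e^{-ct/L}$ for $t\geq L$, with an absolute constant $c>0$.

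To finish, I would use the layer-cake formula $\mathbb{E}(\Phi(|X|))=\int_0^\infty\Phi'(t)\,\mathbb{P}(|X|>t)\,dt$ and split the range of integration at $t=L$. The contribution from $[0,L]$ is trivially bounded by $\int_0^L\Phi'(t)\,dt=\Phi(L)$. The tail contribution is at most $2\int_L^\infty\Phi'(t)e^{-ct/L}\,dt$; one integration by parts plus the substitution $u=t/L$ converts this into a constant multiple of $\int_1^\infty\Phi(Lu)e^{-cu}\,du$. The $\Delta_2$-condition forces polynomial growth $\Phi(Lu)\leq C\Phi(L)u^p$ for $u\geq 1$ and some fixed $p<\infty$ (independent of $L$), so the last integral is bounded by $C_\Phi\Phi(L)\int_1^\infty u^p e^{-cu}\,du\leq C_\Phi\Phi(L)$, as required.

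The main obstacle is selecting a concentration tool that pairs cleanly with the $\Delta_2$-polynomial growth. Bernstein's inequality is the most direct: its sub-Gaussian regime handles $t\leq L$ and its sub-exponential regime handles $t\geq L$, both of which are exactly tamed by $\Delta_2$. An alternative route would be to apply Rosenthal's inequality to obtain $\mathbb{E}(|\sum_k g_k|^p)^{1/p}\leq C_p L$ for every $p\geq 2$ and then use Chebyshev, choosing $p$ larger than the Boyd growth index of $\Phi$; this also works but is less streamlined.
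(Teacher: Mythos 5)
Your proof is correct, but it takes a genuinely different route from the paper's. The paper proves this lemma (for positive $f_k$, which is all it needs) entirely within its Kruglov-operator/majorization framework: it distinguishes the two cases $\alpha>A$ and $\alpha\leq A$ (where $\alpha=\sum_k\|f_k\|_1$ and $A=\sup_k\|f_k\|_\infty$), majorizes each $f_k$ by a scalar multiple of an indicator $\prec$-wise, pushes the majorization through the sum via Lemma \ref{pacific lemma}, and then applies Lemma \ref{phimoment} (which ultimately rests on the estimate $\mathbb{E}(\Phi(K_{\rm class}f))\leq C_\Phi\mathbb{E}(\Phi(f))$ for the Kruglov operator). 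Your argument instead centers, applies Bernstein's concentration inequality to get a sub-exponential tail $\mathbb{P}(|\sum_k g_k|>t)\leq 2e^{-ct/L}$ for $t\geq L$, and then beats this tail against the polynomial-growth consequence of $\Delta_2$ via the layer-cake representation. The paper's approach buys internal consistency with the rest of the machinery (and in fact realizes this lemma as a corollary of the same $\prec$-technology that drives the whole section), whereas yours is more self-contained and elementary, uses only classical probabilistic tools, handles signed $f_k$ without modification, and makes the interplay between the exponential tail and $\Delta_2$-growth explicit. One small point worth noting: the paper's proof as written genuinely requires $f_k\geq 0$ (the relation $f_k\prec\alpha\chi_{[0,\alpha^{-1}\alpha_k]}$ is defined only for nonnegative functions), so your proof is in fact slightly more general than what the paper establishes, though of course the positive case is all that is needed downstream.
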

\begin{proof} To lighten the notations, we write
$$\sup_{1\leq k\leq n}\|f_k\|_{\infty}=A,\quad\|f_k\|_1=\alpha_k,\quad\sum_{k=1}^n\|f_k\|_1=\alpha.$$

If $\alpha>A$, then $f_k\prec \alpha\chi_{[0,\alpha^{-1}\alpha_k]}$ for $1\leq k\leq n$.
Applying Lemma \ref{pacific lemma}, we obtain
$$\sum_{k=1}^nf_k\prec\alpha\sum_{k=1}^n1^{\otimes (k-1)}\otimes\chi_{[0,\alpha^{-1}\alpha_k]}\otimes 1^{\otimes (n-k)}.$$
Therefore,
$$\mathbb{E}\Big(\Phi\big(\sum_{k=1}^n f_k\big)\Big)\leq \mathbb{E}\Big(\Phi\big(\alpha\sum_{k=1}^n1^{\otimes (k-1)}\otimes\chi_{[0,\alpha^{-1}\alpha_k]}\otimes 1^{\otimes (n-k)}\big)\Big).$$
By Lemma \ref{phimoment}, we have that
$$\mathbb{E}\Big(\Phi\big(\sum_{k=1}^n f_k\big)\Big)\leq \mathbb{E}\Big(\Phi\big(\alpha\bigoplus_{k=1}^n\chi_{[0,\alpha^{-1}\alpha_k]}\big)\Big)=\Phi(\alpha).$$

If $\alpha\leq A,$ then $f_k\prec A\chi_{[0,A^{-1}\alpha_k]}$ for $1\leq k\leq n.$ It follows from Lemma \ref{pacific lemma} that
$$\sum_{k=1}^nf_k\prec A\sum_{k=1}^n1^{\otimes (k-1)}\otimes\chi_{[0,A^{-1}\alpha_k]}\otimes 1^{\otimes (n-k)}.$$
Therefore,
$$\mathbb{E}\Big(\Phi\big(\sum_{k=1}^n f_k\big)\Big)\leq \mathbb{E}\Big(\Phi\big(A\sum_{k=1}^n1^{\otimes (k-1)}\otimes\chi_{[0,A^{-1}\alpha_k]}\otimes 1^{\otimes (n-k)}\big)\Big).$$
By Lemma \ref{phimoment}, we have that
$$\mathbb{E}\Big(\Phi\big(\sum_{k=1}^n f_k\big)\Big)\leq \mathbb{E}\Big(\Phi\big(A\bigoplus_{k=1}^n\chi_{[0,A^{-1}\alpha_k]}\big)\Big)=\frac{\alpha}{A}\Phi(A).$$
Combining this estimates, we conclude the proof.
\end{proof}

Now, we are ready to prove our first main result, Theorem \ref{positive js rhs}.

\begin{proof}[Proof of theorem \ref{positive js rhs}] Without loss of generality, we may assume that
$$\mathbb{P}\{f_k=t\}=0,\quad t>0,\quad 1\leq k \leq n.$$
We set
$$f_{k,1}:=f_k\chi_{\{f_k>\mu(1,f)\}},\quad f_{k,2}:=f_k-f_{k,1},\quad 1\leq k\leq n.$$
The random variables $f_{k,1},$ $1\leq k\leq n,$ are positive and independent and so are the random variables $f_{k,2},$ $1\leq k\leq n.$ We also have that
$$\mu(\bigoplus_{k=1}^nf_{k,1})=\mu(f)\chi_{(0,1)},\quad\mu(\bigoplus_{k=1}^nf_{k,2})=\mu(\mu(f)\chi_{(1,\infty)}).$$
By \eqref{ineq13}
$$\Phi(u+v)\leq C_{\Phi} \big(\Phi(u)+\Phi(v)\big),\quad u,v>0.$$
Therefore, applying Lemma \ref{phimoment} and Lemma \ref{norm}, we obtain
\begin{eqnarray*}
\mathbb{E}\Big(\Phi\big(\sum_{k=1}^nf_k\big)\Big)
&\leq& C_{\Phi}\Big(\mathbb{E}\big(\Phi(\sum_{k=1}^nf_{k,1})\big)+\mathbb{E}\big(\Phi(\sum_{k=1}^nf_{k,2})\big)\Big)
\\&\leq& C_{\Phi}\Big(\mathbb{E}\big(\Phi(\bigoplus_{k=1}^nf_{k,1})\big)+\Phi\big(\|\bigoplus_{k=1}^n f_{k,2}\|_{L_1\cap L_{\infty}}\big)\Big)
\\&=& C_{\Phi}\Big[\mathbb{E}\Big(\Phi \big(\mu(f)\chi_{(0,1)} \big)\Big)+\Phi\Big(\|\mu(f)\chi_{(1,\infty)}\|_{L_1\cap L_{\infty}}\Big)\Big].
\end{eqnarray*}
The assertion follows now from the fact
$$\|\mu(f)\chi_{(1,\infty)}\|_{L_1\cap L_{\infty}}\leq\|f\|_1.$$
\end{proof}

We now turn to the converse inequality of Theorem \ref{positive js rhs}.

\begin{theorem}\label{positive js lhs}
Suppose that $\Phi$ is an Orlicz function satisfying $\Delta_2$-condition. If $\{f_k\}_{k=1}^n \subset L_{\Phi}[0,1]$, $n\in\mathbb{N},$ is a sequence of positive independent random variables, then
$$\mathbb{E}\Big(\Phi \big(\mu(f)\chi_{(0,1)} \big)\Big)+\Phi\Big(\|f\|_1\Big)
\leq 3\mathbb{E}\Big(\Phi\big(\sum_{k=1}^n {f_k}\big)\Big),\quad f=\bigoplus_{k=1}^n f_k.$$
\end{theorem}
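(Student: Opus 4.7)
The plan is to bound the two summands on the left-hand side separately and obtain constants $1$ and $2$, for a total of $3$.

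The second summand $\Phi(\|f\|_1)$ is easy. Since the $f_k$ are positive, additivity of expectation gives $\|f\|_1 = \sum_{k=1}^n \|f_k\|_1 = \mathbb{E}(\sum_{k=1}^n f_k)$, and Jensen's inequality applied to the convex function $\Phi$ yields
\[
\Phi(\|f\|_1)=\Phi\!\left(\mathbb{E}\big(\sum_k f_k\big)\right)\leq \mathbb{E}\!\left(\Phi\big(\sum_k f_k\big)\right).
\]

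For the first summand I would pass through $\max_k f_k$. Because $f_k\geq 0$ and $\Phi$ is nondecreasing on $[0,\infty)$,
$\mathbb{E}(\Phi(\max_k f_k))\leq \mathbb{E}(\Phi(\sum_k f_k))$, so it suffices to prove
$\mathbb{E}(\Phi(\mu(f)\chi_{(0,1)}))\leq 2\,\mathbb{E}(\Phi(\max_k f_k))$.
Using the right derivative $\Phi'$ of the convex function $\Phi$ and Fubini, the two sides can be written as layer-cake integrals:
\[
\mathbb{E}(\Phi(\max_k f_k))=\int_0^\infty \Phi'(s)\,\mathbb{P}\{\max_k f_k>s\}\,ds,
\]
\[
\mathbb{E}(\Phi(\mu(f)\chi_{(0,1)}))=\int_0^1\Phi(\mu(t,f))\,dt=\int_0^\infty \Phi'(s)\,\min\bigl(1,\lambda(s,f)\bigr)\,ds,
\]
where $\lambda(s,f)=\sum_{k=1}^n \mathbb{P}\{f_k>s\}$ by definition of the disjoint sum. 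The desired inequality then reduces to the pointwise bound
\[
\mathbb{P}\{\max_k f_k>s\}\geq \tfrac{1}{2}\min\bigl(1,\lambda(s,f)\bigr),\qquad s>0.
\]

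To verify this, set $p_k:=\mathbb{P}\{f_k>s\}$ and $P:=\sum_k p_k=\lambda(s,f)$. By independence,
\[
\mathbb{P}\{\max_k f_k>s\}=1-\prod_{k=1}^n(1-p_k)\geq 1-e^{-P}.
\]
The elementary inequality $1-e^{-P}\geq \tfrac{1}{2}\min(1,P)$ is routine: if $P\leq 1$, concavity of $P\mapsto 1-e^{-P}$ on $[0,1]$ together with the chord bound $1-e^{-P}\geq(1-e^{-1})P\geq P/2$ works; if $P\geq 1$, then $1-e^{-P}\geq 1-e^{-1}>1/2$. Combining, I obtain $\mathbb{E}(\Phi(\mu(f)\chi_{(0,1)}))\leq 2\,\mathbb{E}(\Phi(\sum_k f_k))$, and adding this to the Jensen bound for $\Phi(\|f\|_1)$ closes the argument with constant $3$. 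There is no real obstacle here: the only slightly non-trivial step is the one-line calculus lemma $1-e^{-P}\geq \tfrac{1}{2}\min(1,P)$; everything else is Jensen, monotonicity, layer-cake, and independence.
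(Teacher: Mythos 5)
Your proof is correct and reaches the stated constant $3$ with the same split (constant $1$ from Jensen for $\Phi(\|f\|_1)$, constant $2$ for the rearrangement term), but the technical route to the constant $2$ is different from the paper's. The paper truncates each $f_k$ to $f_{k,1}=f_k\chi_{\{f_k>\mu(1,f)\}}$ so that $\sum_k\mathbb{P}(\mathrm{supp}(f_{k,1}))\leq 1$, identifies $\mu(f)\chi_{(0,1)}=\mu(\bigoplus_k f_{k,1})$, and then invokes Lemma~\ref{wbjohn lemma} (quoted from \cite{JS}, not proved in the paper), namely $\mu(\bigoplus_k f_{k,1})\leq\sigma_2\mu(\max_k f_{k,1})\leq\sigma_2\mu(\sum_k f_{k,1})$, whose dilation $\sigma_2$ is what produces the factor $2$. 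You instead bypass the truncation entirely: your layer-cake identity $\mathbb{E}(\Phi(\mu(f)\chi_{(0,1)}))=\int_0^\infty\Phi'(s)\min(1,\lambda(s,f))\,ds$ absorbs the restriction to $(0,1)$ into the $\min(1,\cdot)$, and you then prove the needed distribution inequality $\mathbb{P}\{\max_k f_k>s\}\geq\tfrac12\min(1,\lambda(s,f))$ from scratch via independence, $1-x\leq e^{-x}$, and the elementary bound $1-e^{-P}\geq\tfrac12\min(1,P)$. This is, up to reformulation in terms of distribution functions rather than rearrangements, precisely the content of Lemma~\ref{wbjohn lemma}, so what you gain is a self-contained proof of that ingredient and a cleaner argument with no truncation step; the constant that emerges is the same $2$. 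Everything checks: the chord bound $1-e^{-P}\geq(1-e^{-1})P\geq P/2$ on $[0,1]$ is valid by concavity, and the case $P\geq 1$ uses $1-e^{-1}>1/2$. (As you implicitly noted, the $\Delta_2$-condition is not used in this direction; it is only part of the hypotheses because the theorem is one half of a two-sided equivalence.)
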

\begin{proof} Let $f_{k,1},$ $f_{k,2},$ $1\leq k\leq n,$ be as in the proof of Theorem \ref{positive js rhs}. By the definition of $f_{k,1},$ we have
$$\sum_{k=1}^n\mathbb P\{\text{supp} (f_{k,1})\}\leq 1.$$
It follows from Lemma \ref{wbjohn lemma} that
$$\mathbb{E}\Big(\Phi \big(\mu(f)\chi_{(0,1)} \big)\Big)=\mathbb{E}\Big(\Phi \big(\bigoplus_{k=1}^nf_{k,1}\big)\Big)\leq 2\mathbb{E}\Big(\Phi \big(\sum_{k=1}^nf_{k,1}\big)\Big)\leq 2\mathbb{E}\Big(\Phi \big(\sum_{k=1}^nf_k\big)\Big).$$

On the other hand, we have
$$\Phi\Big(\|f\|_1\Big)=\Phi\left(\|\sum_{k=1}^nf_k\|_1\right)\leq\mathbb{E}\Big(\Phi\big(\sum_{k=1}^n {f_k}\big)\Big).$$
Here, the last inequality follows from the convexity of $\Phi.$ Combining these inequalities, we obtain the desired result.
\end{proof}

We now consider the case of symmetrically distributed random variables. The following theorem is our second main result.

\begin{theorem}\label{sym js rhs}
Suppose that $\Phi$ is an Orlicz function satisfying $\Delta_2$-condition. If $\{f_k\}_{k=1}^n \subset L_{\Phi}[0,1]$, $n\in\mathbb{N},$ is a sequence of symmetrically distributed independent random variables, then
\begin{equation}
\mathbb{E}\Big(\Phi\big(|\sum_{k=1}^nf_k|\big)\Big)\leq C_{\Phi}\Big[\mathbb{E}\Big(\Phi \big(\mu(f)\chi_{(0,1)} \big)\Big)+\Phi\Big(\|f\|_{L_1+L_2}\Big)\Big],\quad f=\bigoplus_{k=1}^nf_k.
\end{equation}
\end{theorem}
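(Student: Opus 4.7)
My plan mirrors the architecture of the proof of Theorem \ref{positive js rhs}, adjusting for the fact that the $f_k$ are now signed. As in that proof, I would split each $f_k$ into a large piece $f_{k,1}:=f_k\chi_{\{|f_k|>\mu(1,f)\}}$ and a bounded piece $f_{k,2}:=f_k-f_{k,1}$ (supported where $|f_k|\le\mu(1,f)$). Truncation preserves symmetry, so both $\{f_{k,1}\}_{k=1}^n$ and $\{f_{k,2}\}_{k=1}^n$ are sequences of symmetrically distributed independent random variables; moreover, by the choice of threshold, $\sum_k\mathbb P(\text{supp}(f_{k,1}))\le 1$. The $\Delta_2$-condition \eqref{ineq13} gives
$$\mathbb{E}\Phi\Big(\big|\sum_{k=1}^n f_k\big|\Big)\le C_\Phi\,\mathbb{E}\Phi\Big(\big|\sum_{k=1}^n f_{k,1}\big|\Big)+C_\Phi\,\mathbb{E}\Phi\Big(\big|\sum_{k=1}^n f_{k,2}\big|\Big),$$
so it suffices to bound the two terms separately.

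For the large piece, the crude bound $|\sum_k f_{k,1}|\le\sum_k|f_{k,1}|$ reduces the matter to the positive case. The $|f_{k,1}|$'s are positive, independent, and their supports sum to at most $1$, so Lemma~\ref{phimoment} gives
$$\mathbb{E}\Phi\big(\sum_k|f_{k,1}|\big)\le C_\Phi\,\mathbb{E}\Phi\big(\bigoplus_k|f_{k,1}|\big)=C_\Phi\,\mathbb{E}\Phi\big(\mu(f)\chi_{(0,1)}\big),$$
exactly as in the positive case, since $\bigoplus_k|f_{k,1}|$ is equimeasurable with $\mu(f)\chi_{(0,1)}$.

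The bounded symmetric piece is the main new ingredient and requires an analog of Lemma~\ref{norm} in which $L_1\cap L_\infty$ is replaced by $L_2\cap L_\infty$ (symmetry forces mean zero, so the first moment is replaced by the second). Specifically, I would prove: for any uniformly bounded, symmetrically distributed, independent sequence $\{g_k\}_{k=1}^n$,
$$\mathbb E\Phi\big(|\sum_k g_k|\big)\le C_\Phi\,\Phi\big(\|\bigoplus_k g_k\|_{L_2\cap L_\infty}\big).$$
The natural route uses Bernstein's inequality: with $B=\sup_k\|g_k\|_\infty$ and $V=\|\bigoplus_k g_k\|_2^2=\sum_k\|g_k\|_2^2$, mean zero and boundedness yield $\mathbb{P}(|\sum_k g_k|>t)\le 2\exp(-c\min(t^2/V,\,t/B))$. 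Writing $\mathbb{E}\Phi(|\sum_k g_k|)=\int_0^\infty\Phi'(t)\mathbb P(|\sum_k g_k|>t)\,dt$, splitting at $t=V/B$, and using that $\Delta_2$ forces at most polynomial growth of $\Phi$ (so the Gaussian/exponential tails are integrable against $\Phi'$), yields the bound by $C_\Phi\Phi(B+\sqrt V)\le C_\Phi\Phi(\|\bigoplus_k g_k\|_{L_2\cap L_\infty})$.

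To conclude, I would verify that $\|\bigoplus_k f_{k,2}\|_{L_2\cap L_\infty}\lesssim\|f\|_{L_1+L_2}$. Indeed, the truncation gives $\mu(t,\bigoplus_k f_{k,2})=\mu(t+1,f)$, so $\|\bigoplus_k f_{k,2}\|_\infty\le\mu(1,f)\le\int_0^1\mu(f)$ and $\|\bigoplus_k f_{k,2}\|_2=\|\mu(f)\chi_{(1,\infty)}\|_2$; by Holmstedt's formula $\|f\|_{L_1+L_2}\sim\|\mu(f)\chi_{(0,1)}\|_1+\|\mu(f)\chi_{(1,\infty)}\|_2$, both terms are dominated. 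The main obstacle is the Bernstein-type Orlicz estimate above: one must integrate the tail against $\Phi'$ carefully and extract a constant depending only on $\Phi$. An alternative route (should the direct Bernstein integration prove awkward) is to use symmetry to replace $\sum_k f_{k,2}$ by $\sum_k\varepsilon_k|f_{k,2}|$ for independent Rademacher signs $\varepsilon_k$, then condition on $|f_{k,2}|$ and apply a subgaussian Kahane--Khintchine Orlicz-moment estimate; under $\Delta_2$ this again produces $\Phi(B+\sqrt V)$ up to a constant. Once the bounded-symmetric lemma is in hand, the three bounds combine via the initial $\Delta_2$ triangle inequality to give the theorem.
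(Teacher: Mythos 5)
Your decomposition into $f_{k,1}$, $f_{k,2}$, the $\Delta_2$ splitting, the treatment of the large piece via Lemma~\ref{phimoment}, and the closing estimate $\|\bigoplus_k f_{k,2}\|_{L_2\cap L_\infty}\lesssim\|f\|_{L_1+L_2}$ via Holmstedt's formula all match the paper exactly. The one genuine divergence is in how you prove the bounded-symmetric estimate $\mathbb{E}\Phi(|\sum_k g_k|)\le C_\Phi\Phi(\|\bigoplus_k g_k\|_{L_2\cap L_\infty})$: the paper's Lemma~\ref{like3.2} establishes this through the Kruglov machinery, constructing an operator $T$ from $(L_2\cap L_\infty)(0,\infty)$ into the Marcinkiewicz space $M_\psi(0,1)$ with $\psi'=\mu(K_{\rm class}1)$, citing \cite[Proposition 18]{pacific} for its boundedness, and then using the submajorization $Tf\prec\prec C\|f\|_{L_2\cap L_\infty}K_{\rm class}1$ together with Lemma~\ref{phik}. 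You instead propose a direct probabilistic argument by integrating Bernstein's exponential tail bound against $\Phi'$, exploiting the polynomial growth forced by $\Delta_2$. This is a legitimate and more elementary alternative, entirely bypassing the Kruglov operator; the cost is that the tail integration (splitting at $t=V/B$, substituting $t=\sqrt{V}s$ and $t=Bs$ on the respective ranges, and bounding $\Phi(\lambda t)\le\lambda^q\Phi(t)$ with $q$ determined by the $\Delta_2$ constant) must be carried out explicitly to extract a constant depending only on $\Phi$, whereas the paper's route uniformly reuses the operator-theoretic framework already in place for the rest of the paper. Your suggested Rademacher/Kahane--Khintchine fallback would also work for the same reason.
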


The key ingredient in the proof is the following lemma.

\begin{lemma}\label{like3.2}
Suppose that $\Phi$ is an Orlicz function satisfying $\Delta_2$-condition. If $\{f_k\}_{k=1}^n \subset L_{\Phi}[0,1]$, $n\in\mathbb{N},$ is a sequence of bounded symmetrically distributed independent random variables, then
$$\mathbb{E}\Big(\Phi\big(|\sum_{k=1}^n f_k\big)|\Big)\leq C_{\Phi}\Phi\Big(\|\bigoplus_{k=1}^n f_k\|_{L_2\cap L_{\infty}(0,\infty)}\Big).$$
\end{lemma}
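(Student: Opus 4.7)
We follow the two-case strategy of Lemma \ref{norm} adapted to the symmetric setting. Set $A=\sup_k\|f_k\|_\infty=\|\bigoplus_k f_k\|_\infty$, $\beta_k=\|f_k\|_2^2$, $\beta=\sum_k\beta_k=\|\bigoplus_k f_k\|_2^2$, and $M=\max(A,\sqrt\beta)$, so that $M$ is comparable to $\|\bigoplus_k f_k\|_{L_2\cap L_\infty}$. The plan is to replace each $f_k$ by a Bernoulli-type surrogate $\tilde f_k$: let $\{\tilde f_k\}_{k=1}^n$ be independent symmetric random variables with $\tilde f_k$ taking values $\pm M$ with probability $\beta_k/(2M^2)$ each and $0$ otherwise. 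By construction $\|\tilde f_k\|_\infty\leq M$, $\|\tilde f_k\|_2^2=\beta_k$, and, crucially, $\sum_k\mathbb{P}(\text{supp}(\tilde f_k))=\beta/M^2\leq 1$.

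This last condition lets us invoke Lemma \ref{phimoment} applied to the surrogates and obtain
$$\mathbb{E}\Phi\bigl(\bigl|\sum\nolimits_k\tilde f_k\bigr|\bigr)\leq C_\Phi\,\mathbb{E}\Phi\bigl(\bigl|\bigoplus\nolimits_k\tilde f_k\bigr|\bigr)=C_\Phi\cdot(\beta/M^2)\Phi(M)\leq C_\Phi\Phi(M).$$
The heart of the proof is therefore the comparison
$$\mathbb{E}\Phi\bigl(\bigl|\sum\nolimits_k f_k\bigr|\bigr)\leq C_\Phi\,\mathbb{E}\Phi\bigl(\bigl|\sum\nolimits_k\tilde f_k\bigr|\bigr),$$
after which combining with the previous display finishes the argument.

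To establish this comparison, I would exploit the symmetry by writing $\sum_k f_k\stackrel{d}{=}\sum_k\varepsilon_k|f_k|$ (and the analogue for $\tilde f_k$) with a common independent Rademacher sequence $\{\varepsilon_k\}$, and condition on the moduli $\{|f_k|\}$. A Khintchine--Kahane-type estimate (available for $\Delta_2$ Orlicz functions) then bounds $\mathbb{E}_\varepsilon\Phi(\abs{\sum_k\varepsilon_k|f_k|})$ in terms of $\Phi((\sum_k|f_k|^2)^{1/2})$, reducing matters to the sum of squares. On the positive independent sequence $\{f_k^2\}$ we have $\|f_k^2\|_\infty\leq M^2$ and $\|f_k^2\|_1=\beta_k$, hence $f_k^2\prec M^2\chi_{[0,\beta_k/M^2]}$, and this rectangle is equimeasurable with $\tilde f_k^2$. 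Lemma \ref{pacific lemma} applied to $\{f_k^2\}$ and $\{\tilde f_k^2\}$, combined with Lemma \ref{phimoment}, then transfers the comparison back to the signed sum. The main obstacle, absent from the positive case of Lemma \ref{norm}, is precisely this passage through squares: direct majorization of $|f_k|$ by a positive rectangle would only yield an $L_1\cap L_\infty$ bound, and the sharper $L_2\cap L_\infty$ bound requires exploiting the Rademacher cancellation that the symmetric distribution provides; this is where the $\Delta_2$ hypothesis on $\Phi$ is essential.
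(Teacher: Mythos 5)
Your strategy — Bernoulli surrogates plus Khintchine--Kahane plus majorization of the squares — is genuinely different from the paper's proof, which instead invokes the operator
$$Tf=\sum_{k=0}^{\infty}1^{\otimes 2k}\otimes (f\chi_{(k,k+1)}(\cdot-k))\otimes r\otimes 1^{\infty}$$
and its boundedness from $(L_2\cap L_\infty)(0,\infty)$ into the Marcinkiewicz space $M_\psi(0,1)$ with $\psi'=\mu(K_{\rm class}1)$ (Proposition 18 in \cite{pacific}), so that $T(\bigoplus_kf_k)\prec\prec C\|\bigoplus_kf_k\|_{L_2\cap L_\infty}K_{\rm class}1$ and Lemma \ref{phik} finishes the job. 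However, your route has a genuine gap precisely at the step you summarized as ``transfers the comparison back to the signed sum.''

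The gap: from $\sum_k f_k^2\prec\sum_k\tilde f_k^2$ (which you correctly obtain via Lemma \ref{pacific lemma}), the only thing majorization delivers is $\mathbb{E}\Psi\bigl(\sum_k f_k^2\bigr)\le\mathbb{E}\Psi\bigl(\sum_k\tilde f_k^2\bigr)$ for \emph{convex} increasing $\Psi$; likewise, applying Lemma \ref{phimoment} to the sequence $\{\tilde f_k^2\}$ with the function $\Psi$ requires $\Psi$ to be an Orlicz function. What you actually need is $\Psi=\Phi\circ\sqrt{\cdot}$, i.e.\ $\mathbb{E}\Phi\bigl((\sum_kf_k^2)^{1/2}\bigr)\le C\mathbb{E}\Phi\bigl((\sum_k\tilde f_k^2)^{1/2}\bigr)$. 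But $\Phi\circ\sqrt{\cdot}$ is convex if and only if $\Phi'(u)/u$ is non-decreasing, i.e.\ $\Phi$ grows at least quadratically. The hypothesis is only $\Delta_2$; for $\Phi(t)=t$ (or $t^{3/2}$) one gets $\Psi(t)=\sqrt t$ (resp.\ $t^{3/4}$), which is concave, and then majorization runs in the \emph{wrong} direction: $\sum f_k^2\prec\sum\tilde f_k^2$ forces $\mathbb{E}(\sum f_k^2)^{1/2}\ge\mathbb{E}(\sum\tilde f_k^2)^{1/2}$, not $\le$. The $\Delta_2$-condition does pass to $\Phi\circ\sqrt{\cdot}$, but it does not supply convexity, so neither the submajorization-to-moment implication nor Lemma \ref{phimoment} can be invoked for $\Psi$. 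Unless you restrict to $\Phi$ with $\Phi'(u)/u$ non-decreasing, the argument does not close; you would need a different device (for instance, working at the level of distribution functions with a Hoffmann--J\o rgensen type estimate, or the operator $T$ used in the paper) to extract the sharp $L_2\cap L_\infty$ control without passing through squares.
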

\begin{proof} Let $\psi$ be a concave function such that $\psi'=\mu(K_{\rm class}1)$ and $\psi(0)=0.$ Consider the Marcinkiewicz space \cite{KPS}
$$M_{\psi}:=\{f\in S(\Omega, \mathbb{P}):\ \sup_{t>0}\frac1{\psi(t)}\int_0^t\mu(s,f)ds<\infty\},\quad \|f\|_{M_{\psi}}:=\sup_{t>0}\frac1{\psi(t)}\int_0^t\mu(s,f)ds.$$
Note that $f\prec\prec\|f\|_{M_{\psi}}\psi'.$ Recall that we identify $(\Omega, \mathbb{P})$ with $(0,1)$ equipped with Lebesgue measure. 

Define an operator $T:(L_2\cap L_{\infty})(0,\infty)\to M_{\psi}(0,1)$ by the following formula
$$Tf=\sum_{k=0}^{\infty}1^{\otimes 2k}\otimes (f\chi_{(k,k+1)}(\cdot-k))\otimes r\otimes 1^{\infty},$$
where $r:=\chi_{(0,1/2)}-\chi_{(1/2,1)}$. By \cite[Proposition 18]{pacific}, the operator $T:(L_2\cap L_{\infty})(0,\infty)\to M_{\psi}(0,1)$ is bounded. In particular, we have that
$$Tf\prec\prec C_{abs}\|f\|_{L_2\cap L_{\infty}}K_{\rm class}1.$$
Consequently,
$$\mathbb{E}(\Phi(Tf))\leq\mathbb{E}(\Phi(C_{abs}\|f\|_{L_2\cap L_{\infty}}K_{\rm class}1))\leq C_{\Phi}\Phi(C_{abs}\|f\|_{L_2\cap L_{\infty}})\leq C_{\Phi}\Phi(\|f\|_{L_2\cap L_{\infty}}).$$
Setting $f=\bigoplus_{k=1}^nf_k,$ we infer the assertion from the equality
$$T\big(\bigoplus_{k=1}^nf_k\big)=\sum_{k=1}^nf_k.$$
\end{proof}

We are now ready to prove Theorem \ref{sym js rhs}.

\begin{proof}[Proof of Theorem \ref{sym js rhs}] Without loss of generality, we may assume that
$$\mathbb{P}\{f_k=t\}=0,\quad t>0,\quad 1\leq k \leq n.$$
We set
$$f_{k,1}:=f_k\chi_{\{|f_k|>\mu(1,f)\}},\quad f_{k,2}:=f_k-f_{k,1},\quad 1\leq k\leq n.$$
The random variables $f_{k,1},$ $1\leq k\leq n,$ are independent and symmetrically distributed and so are the random variables $f_{k,2},$ $1\leq k\leq n.$ We also have that
$$\mu(\bigoplus_{k=1}^nf_{k,1})=\mu(f)\chi_{(0,1)},\quad\mu(\bigoplus_{k=1}^nf_{k,2})=\mu(\mu(f)\chi_{(1,\infty)}).$$
By the assumption, we have
$$\Phi(u+v)\leq C_{\Phi} \big(\Phi(u)+\Phi(v)\big),\quad u,v>0.$$
Therefore, applying Lemma \ref{phimoment} and Lemma \ref{like3.2}, we obtain
\begin{eqnarray*}
\mathbb{E}\Big(\Phi\big(|\sum_{k=1}^n {f_k}|\big)\Big)
&\leq& C_{\Phi}\Big(\mathbb{E}\big(\Phi(|\sum_{k=1}^n {f_{k,1}}|)\big)+\mathbb{E}\big(\Phi(|\sum_{k=1}^n {f_{k,2}}|)\big)\Big)
\\&\leq& C_{\Phi}\Big(\mathbb{E}\big(\Phi(|\bigoplus_{k=1}^n {f_{k,1}}|)\big)+\Phi\big(\|\bigoplus_{k=1}^n f_{k,2}\|_{L_2\cap L_{\infty}}\big)\Big)
\\&=& C_{\Phi}\Big[\mathbb{E}\Big(\Phi \big(\mu(f)\chi_{(0,1)} \big)\Big)+\Phi\Big(\|\mu(f)\chi_{(1,\infty)}\|_{L_2\cap L_{\infty}}\Big)\Big].
\end{eqnarray*}
The assertion follows now from the fact
$$\Phi\Big(\|\mu(f)\chi_{(1,\infty)}\|_{L_2\cap L_{\infty}}\Big)\leq\Phi\Big(\|\mu(f)\chi_{(1,\infty)}\|_2+\|\mu(f)\chi_{(1,\infty)}\|_{\infty}\Big)\leq$$
$$\leq \Phi\Big(\|\mu(f)\chi_{(1,\infty)}\|_2+\|\mu(f)\chi_{(0,1)}\|_1\Big)\leq\Phi(C_{abs}\|f\|_{L_1+L_2})\leq C_{\Phi}\Phi(\|f\|_{L_1+L_2}).$$
\end{proof}

The theorem below provides the opposite inequality to that of Theorem \ref{sym js rhs}.

\begin{theorem}\label{sym js lhs}
Suppose that $\Phi$ is an Orlicz function satisfying $\Delta_2$-condition. If $\{f_k\}_{k=1}^n \subset L_{\Phi}[0,1],$ $n\in\mathbb{N},$ is a sequence of symmetrically distributed independent random variables, then
$$\mathbb{E}\Big(\Phi \big(\mu(f)\chi_{(0,1)} \big)\Big)+\Phi\Big(\|f\|_{L_1+L_2}\Big)
\leq C \mathbb{E}\Big(\Phi\big(|\sum_{k=1}^n {f_k}|\big)\Big),\quad f=\bigoplus_{k=1}^nf_k.$$
\end{theorem}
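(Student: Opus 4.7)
The plan is to bound the two summands $\mathbb{E}\Phi(\mu(f)\chi_{(0,1)})$ and $\Phi(\|f\|_{L_1+L_2})$ separately by a constant multiple of $\mathbb{E}\Phi(|S_n|)$, where $S_n:=\sum_{k=1}^n f_k$. As in the proof of Theorem~\ref{sym js rhs}, I would split each $f_k=f_{k,1}+f_{k,2}$ with $f_{k,1}:=f_k\chi_{\{|f_k|>\mu(1,f)\}}$ and set $M:=\mu(1,f)$; thus $|f_{k,2}|\leq M$, $\mu(\bigoplus_k|f_{k,1}|)=\mu(f)\chi_{(0,1)}$, and $\sum_k\mathbb{P}(\mathrm{supp}(f_{k,1}))\leq 1$.

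For the disjoint-sum term, Lemma~\ref{wbjohn lemma} applied to the positive independent sequence $(|f_{k,1}|)_k$ yields $\mu(\bigoplus_k|f_{k,1}|)\leq\sigma_2\mu(\max_k|f_k|)$. Combined with L\'evy's maximal inequality $\mathbb{P}(\max_j|S_j|>t)\leq 2\mathbb{P}(|S_n|>t)$ (valid for symmetric independent summands), the pointwise bound $|f_k|\leq 2\max_j|S_j|$, and the $\Delta_2$-condition, one obtains
$$\mathbb{E}\Phi(\mu(f)\chi_{(0,1)})\leq 2\mathbb{E}\Phi(\max_k|f_k|)\leq C_\Phi\mathbb{E}\Phi(|S_n|).$$

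For the $L_1+L_2$ term, Holmstedt's formula gives $\|f\|_{L_1+L_2}\approx A+B$ where $A:=\|\mu(f)\chi_{(0,1)}\|_1$ and $B:=\|\mu(f)\chi_{(1,\infty)}\|_2$, so by $\Delta_2$ it suffices to bound $\Phi(A)$ and $\Phi(B)$ separately. Jensen's inequality gives $\Phi(A)\leq\mathbb{E}\Phi(\mu(f)\chi_{(0,1)})$, which is controlled by the previous step. When $B\leq M$, the inequality $\mu(f)\geq M$ on $(0,1)$ yields $\Phi(B)\leq\Phi(M)\leq\mathbb{E}\Phi(\mu(f)\chi_{(0,1)})$, finishing this case.

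The crux is the remaining case $B>M$, which I handle via two applications of Paley--Zygmund. Since the $f_{k,2}$ are symmetric and independent, conditioning on $(|f_{k,2}|)_k$ turns $\sum_k f_{k,2}$ into a Rademacher sum $\sum_k\varepsilon_k|f_{k,2}|$; Paley--Zygmund combined with Khintchine's fourth moment bound $\mathbb{E}_\varepsilon|\sum\varepsilon_k a_k|^4\leq 3(\sum a_k^2)^2$ yields the absolute lower bound $\mathbb{P}_\varepsilon(|\sum\varepsilon_k a_k|\geq c(\sum a_k^2)^{1/2})\geq c'$, and via $\Delta_2$ one concludes $\mathbb{E}\Phi(|\sum_k f_{k,2}|)\geq c_\Phi\mathbb{E}\Phi((\sum_k f_{k,2}^2)^{1/2})$. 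A second Paley--Zygmund applied to $Y:=\sum_k f_{k,2}^2$, whose mean equals $B^2$ and whose second moment satisfies $\mathbb{E}Y^2\leq B^4+M^2B^2\leq 2B^4$ under $B>M$, shows $\mathbb{P}(Y\geq B^2/2)\geq 1/8$, so $\mathbb{E}\Phi(Y^{1/2})\geq c_\Phi\Phi(B)$; hence $\Phi(B)\leq C_\Phi\mathbb{E}\Phi(|\sum_k f_{k,2}|)$. To close the loop I would use $|\sum_k f_{k,2}|\leq |S_n|+|\sum_k f_{k,1}|$ together with $\Delta_2$, and bound $\mathbb{E}\Phi(|\sum_k f_{k,1}|)\leq\mathbb{E}\Phi(\sum_k|f_{k,1}|)$ by invoking Theorem~\ref{positive js rhs} for the positive independent sequence $(|f_{k,1}|)_k$, whose disjoint sum has $\mu(f)\chi_{(0,1)}$ as decreasing rearrangement and $A$ as $L_1$-norm, quantities already controlled. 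The main obstacle throughout is precisely the anti-concentration estimate $\mathbb{E}\Phi(|\sum_k f_{k,2}|)\geq c_\Phi\Phi(B)$ for bounded symmetric independent summands whose $L_2$-mass exceeds the uniform bound $M$, which is what the two Paley--Zygmund steps are designed to establish.
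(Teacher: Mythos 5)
Your proof is correct, and while the first half mirrors the paper closely, the treatment of the $L_1+L_2$ term is genuinely different. For $\mathbb{E}\Phi(\mu(f)\chi_{(0,1)})$ the paper also uses Lemma~\ref{wbjohn lemma} together with the Feller/L\'evy-type tail inequality $\mathbb P\{\max_k|f_k|>t\}\leq 2\mathbb P\{|S_n|>t\}$ for symmetric independent summands (citing \cite[Lemma V.5.2]{an introduction}); you reach the same estimate through the intermediate pointwise bound $|f_k|\le 2\max_j|S_j|$, which is the same tool dressed differently. For the second term the paper is short: it invokes the symmetric Johnson--Schechtman inequality \cite[Theorem 1]{JS} in the space $E=L_1$ to get $\|\bigoplus f_k\|_{L_1+L_2}\leq C\|\sum f_k\|_1$ and then applies Jensen and $\Delta_2$. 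You instead avoid this citation entirely and re-derive the required anti-concentration bound from first principles: Holmstedt to separate the $L_1$ and $L_2$ pieces, a conditional-Rademacher/Khintchine Paley--Zygmund argument to get $\mathbb{E}\Phi(|\sum f_{k,2}|)\gtrsim_\Phi\mathbb{E}\Phi((\sum f_{k,2}^2)^{1/2})$, a second Paley--Zygmund on $Y=\sum f_{k,2}^2$ (where the bound $\mathbb E Y^2\le 2B^4$ crucially uses $|f_{k,2}|\le M<B$), and then a bootstrap through the already-established estimate for $\mathbb{E}\Phi(\mu(f)\chi_{(0,1)})$ together with Theorem~\ref{positive js rhs} applied to $(|f_{k,1}|)_k$ to close the loop. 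Your route is longer but self-contained and structurally transparent; the paper's route buys brevity at the cost of importing the full $L_1$ Johnson--Schechtman theorem as a black box. Both are valid.
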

\begin{proof} Let $f_{k,1},$ $1\leq k\leq n,$ be as in the proof of Theorem \ref{sym js rhs}. It follows from Lemma \ref{wbjohn lemma} that
$$\mathbb P\{|\bigoplus_{k=1}^n f_{k,1}|>t\}\leq 2\mathbb P\{\max_{1\leq k\leq n}|f_{k,1}|>t\}\leq 2\mathbb P\{\max_{1\leq k\leq n}|f_k|>t\},\quad t>0.$$
It follows now from  \cite[Lemma V.5.2]{an introduction} that
$$\mathbb P\{|\bigoplus_{k=1}^n f_{k,1}|>t\}\leq 4\mathbb P\{|\sum_{k=1}^nf_k|>t\}.$$
Therefore, we have
$$\mathbb{E}\Big(\Phi \big(\mu(f)\chi_{(0,1)} \big)\Big)=\mathbb{E}\Big(\Phi(\bigoplus_{k=1}^nf_{k,1})\Big)\leq 4\mathbb{E}\Big(\Phi\big(|\sum_{k=1}^n {f_k}|\big)\Big).$$

Applying  \cite[Theorem 1]{JS} to the space $L_1$ (see also \eqref{JS}), we infer that there exists an absolute constant $C>0$ such that
$$\|\bigoplus_{k=1}^nf_k\|_{L_1+L_2}\leq C\|\sum_{k=1}^nf_k\|_1.$$
Thus
\begin{eqnarray*}
\Phi(\|f\|_{L_1+L_2})\leq\Phi(C\|\sum_{k=1}^nf_k\|_1)\leq C_{\Phi}\mathbb E\Big(\Phi\big(|\sum_{k=1}^nf_k|\big)\Big).
\end{eqnarray*}
Combining these inequalities, we conclude the proof.
\end{proof}

Now we deal with the maximal inequalities and prove Theorem \ref{CM}.

\begin{proof}[Proof of Theorem \ref{CM}] Let $f_{k,1},f_{k,2}$ as the proof of Theorem \ref{positive js rhs}. We have
$$\Phi(x+y)\leq C_{\Phi}(\Phi(x)+\Phi(y)),\quad x,y>0$$
and, therefore,
$$\mathbb E\Phi\big(\sup_{1\leq k\leq n} {f_k}\big)\leq C_{\Phi}\Big(\mathbb E\Phi\big(\sup_{1\leq k\leq n} {f_{k,1}}\big)+\mathbb E\Phi\big(\sup_{1\leq k\leq n} {f_{k,2}}\big)\Big).$$
It follows from Lemma \ref{phimoment} that
$$\mathbb E\Phi\big(\sup_{1\leq k\leq n} f_{k,1}\big)\leq\mathbb E\Phi\big(\sum_{k=1}^nf_{k,1}\big)\leq C_{\Phi}\mathbb E\Phi\big(\bigoplus_{k=1}^nf_{k,1}\big)=C_{\Phi}\mathbb E\Phi\big(\mu(f)\chi_{(0,1)}\big).$$
Also, we have $\sup_{1\leq k\leq n}f_{k,2}\leq \mu(1,f)$ and, therefore,
$$\mathbb E\Phi\big(\sup_{1\leq k\leq n} {f_{k,2}}\big)\leq \Phi(\|\mu(f)\chi_{(0,1)}\|_1)\leq \mathbb E\Phi\big(\mu(f)\chi_{(0,1)}\big).$$
Therefore, we have
$$\mathbb E\Phi\big(\sup_{1\leq k\leq n} {f_k}\big)\leq \mathbb E\Phi\big(\mu(f)\chi_{(0,1)}\big).$$
To prove the converse inequality, note that it follows from Lemma \ref{wbjohn lemma} that
$$\mathbb E\Phi\big(\mu(f)\chi_{(0,1)}\big)=\mathbb E\Phi\big(\bigoplus_{k=1}^nf_{k,1}\big)\leq 2\mathbb E\Phi\big(\max_{1\leq k\leq n}f_{k,1}\big)\leq 2\mathbb E\Phi\big(\max_{1\leq k\leq n}f_k\big).$$
\end{proof}

As stated in introduction, the $\Delta_2$-condition is necessary in Theorem \ref{main results 1}. We now prove \eqref{sharpa} in Theorem \ref{sharp}.

\begin{proof}[Proof of Theorem \ref{sharp}(i)] Let $\{f_{k,n}\}_{k=1}^n$ be a sequence of independent random variables such that $f_{k,n}$ be equimeasurable with of $a\chi_{(0,1/n)}$ for some fixed $a>0$ and every $1\leq k\leq n$. Set for brevity $g:=K_{\rm class}1$. By  \cite[Lemma 6]{pacific} we have
$$\sum_{k=1}^nf_{k,n}\longrightarrow a g$$
in distribution. We also have $\bigoplus_{k=1}^nf_{k,n}=a.$ It follows from the Fatou theorem that 
$$\mathbb{E}\Phi\Big(ag\Big)\leq\liminf_{n\to\infty}\mathbb{E}\Phi\Big(\sum_{k=1}^nf_{k,n}\Big)\leq C_\Phi\liminf_{n\to\infty}\mathbb{E}\Phi\Big(\bigoplus_{k=1}^nf_{k,n}\Big)=C_{\Phi}\Phi(a).$$
Observe that $\int_0^1\Phi(ag)d{\mathbb P}\ge \Phi(2a)\cdot \mathbb{P}\{g\ge 2\}$. 
A combination of preceding inequalities yields that $\Phi$ satisfies $\Delta_2$-condition. The proof is complete.
\end{proof}

\section{Noncommutative (free) $\Phi$-moment inequalities}

In this section, we prove Theorem \ref{main results 2}, that is, $\Phi$-moment versions of Johnson-Schechtman inequalities for freely independent random variables. The symbols $L_p(\mathcal{M},\tau)$, $1\leq p \leq \infty$ and $L_\Phi(\mathcal{M},\tau)$ stand for noncommutative $L_p$-spaces and noncommutative Orlicz spaces respectively (see e.g. \cite{FK, SC, CS}).

\begin{lemma}\label{sssa} Let $n\ge 1$ and let $\{x_k\}_{k=1}^n$ and $\{y_k\}_{k=1}^n$ be sequences of freely independent positive random variables from $L_1(\mathcal{M},\tau).$
 If $\mu(y_k)\leq\mu(x_k)$ for $1\leq k\leq n,$ then
$$\mu\Big(\sum_{k=1}^ny_k\Big)\leq \mu\Big(\sum_{k=1}^nx_k\Big).$$
\end{lemma}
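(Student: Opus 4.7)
The plan is to replace both families by equimeasurable, freely independent copies sitting inside a common free product of non-atomic algebras, where the pointwise comparison $\mu(y_k)\le\mu(x_k)$ can be upgraded to an honest operator inequality $\tilde y_k\le \tilde x_k$.

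First, for each $k=1,\dots,n$ I would pick an auxiliary non-atomic finite von Neumann algebra $(\mathcal{N}_k,\tau_k)$ (for instance $\mathcal{N}_k=L_\infty(0,1)$ with Lebesgue trace) and apply Lemma \ref{embedding} to obtain a positive rearrangement-preserving $\ast$-algebra isomorphism $J_k\colon S(0,1)\to S(\mathcal{N}_k,\tau_k)$. Setting
$$\tilde x_k:=J_k(\mu(x_k)),\qquad \tilde y_k:=J_k(\mu(y_k)),$$
the positivity of $J_k$ combined with $0\le\mu(y_k)\le\mu(x_k)$ in the commutative ordered algebra $S(0,1)$ yields $0\le \tilde y_k\le \tilde x_k$ in $\mathcal{N}_k$, while the rearrangement-preserving property gives $\mu(\tilde x_k)=\mu(x_k)$ and $\mu(\tilde y_k)=\mu(y_k)$.

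Next, I would form the free product $(\mathcal{N},\tau):=\star_{k=1}^n(\mathcal{N}_k,\tau_k)$. By the very definition of the free product the subalgebras $\mathcal{N}_k\subset\mathcal{N}$ are freely independent, so $\{\tilde x_k\}_{k=1}^n$ and $\{\tilde y_k\}_{k=1}^n$ are each freely independent families in $(\mathcal{N},\tau)$, with the same individual distributions as the original $\{x_k\}$ and $\{y_k\}$. Lemma \ref{FS} then implies that the sums have matching distributions in the two realisations:
$$\mu\Big(\sum_{k=1}^n x_k\Big)=\mu\Big(\sum_{k=1}^n \tilde x_k\Big),\qquad \mu\Big(\sum_{k=1}^n y_k\Big)=\mu\Big(\sum_{k=1}^n \tilde y_k\Big).$$
Summing the pointwise operator inequalities $\tilde y_k\le \tilde x_k$ gives $\sum_k\tilde y_k\le \sum_k\tilde x_k$, and monotonicity of the generalised singular value function for positive operators (see \cite{FK}) closes the chain.

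The main conceptual subtlety — and the step I would take care with — is that one cannot simply realise $\tilde y_k$ as an element of the von Neumann algebra generated by $x_k$ inside the original $\mathcal{M}$: that algebra may be atomic (or otherwise too small) to contain a copy of $y_k$ with the prescribed distribution. The free product of auxiliary non-atomic algebras is precisely what provides enough room to carry out the comparison operator-by-operator, and Lemma \ref{FS} is what certifies that moving the problem to this enlarged setting does not alter the distribution of the sums.
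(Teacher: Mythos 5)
Your proposal is correct and follows essentially the same route as the paper's own proof: embed $\mu(x_k)$ and $\mu(y_k)$ via trace-preserving (positive) $*$-homomorphisms into the $k$-th factor of a free product $\star_{k=1}^n\mathcal{N}_k$, use Lemma \ref{FS} to equate distributions of sums, upgrade the pointwise inequality $\mu(y_k)\le\mu(x_k)$ to an operator inequality inside each factor, sum, and invoke monotonicity of $\mu$. The only cosmetic difference is that you route the embedding through Lemma \ref{embedding} while the paper simply posits trace-preserving $*$-homomorphisms $i_k\colon L_\infty(0,1)\to\mathcal{N}_k$; the underlying idea is identical.
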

\begin{proof} Let $\mathcal{N}_k,$ $1\leq k\leq n,$ be finite von Neumann algebras. Let $\mathcal{N}=\star_{k=1}^n\mathcal{N}_k$ and let $i_k:L_{\infty}(0,1)\to\mathcal{N}_k$ be trace preserving $*-$homomorphisms. Let $u_k=i_k(\mu(x_k))$ and $v_k=i_k(\mu(y_k)).$ By Lemma \ref{FS}, we have
$$\mu(\sum_{k=1}^nx_k)=\mu(\sum_{k=1}^nu_k),\quad \mu(\sum_{k=1}^ny_k)=\mu(\sum_{k=1}^nv_k).$$
It is clear that $v_k=i_k(\mu(y_k))\leq i_k(\mu(x_k))=u_k,$ $1\leq k\leq n.$ Therefore, we have
$$0\leq\sum_{k=1}^nv_k\leq\sum_{k=1}^nu_k.$$
Thus,
$$\mu(\sum_{k=1}^nv_k)\leq\mu(\sum_{k=1}^nx_k).$$
This concludes the proof.
\end{proof}

If $x\in S^h(\M,\tau)$, then the projection onto the closure of the range of $\vert x\vert$ is called the support of $x$ and is denoted by ${\rm supp}(x)$.

\begin{lemma}\label{head lemma} Let $\{x_k\}_{k=1}^n\subset S(\mathcal{M},\tau)$ be a sequence of positive freely independent random variables. If $\sum_{k=1}^n\tau({\rm supp}(x_k))\leq 1,$ then\footnote{Here the symbol $\bigoplus_{k=1}^nx_k$ can be understood as a sum of any sequence $\{a_k\}_{k=1}^n\subset S(\mathcal{M},\tau)$ of positive operators whose supports are pairwise orthogonal and such that $\mu(a_k)=\mu(x_k)$, $1\leq k\leq n$.}
\begin{equation}\label{two-sided}
\frac1{10}\sigma_{\frac1{20}}\mu(\sum_{k=1}^nx_k)\leq\mu\Big(\bigoplus_{k=1}^nx_k\Big)\leq10\sigma_{20}\mu(\sum_{k=1}^nx_k).
\end{equation}
\end{lemma}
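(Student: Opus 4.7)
My plan is to derive both halves of \eqref{two-sided} by running the free Kruglov operator $K_{\rm free}$ on a disjoint representation of $\{x_k\}_{k=1}^n$ and then combining the two-sided pointwise estimate in Lemma \ref{KB} with the monotonicity property of Lemma \ref{sssa}. I would first fix disjointly supported $g_k\in S(0,1)$ with $\mu(g_k)=\mu(x_k)$ and set $g:=\sum_{k=1}^n g_k$, so that $\mu(g)=\mu(\bigoplus_{k=1}^n x_k)$. Since the map $K_{\rm free}(x)=wxw$ is linear, $\sum_{k=1}^n K_{\rm free}(g_k)=K_{\rm free}(g)$, and by Theorem \ref{DF} these summands are freely independent.

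For the right-hand inequality in \eqref{two-sided}, the upper half of Lemma \ref{KB} yields $\mu(\tfrac{1}{4}K_{\rm free}(g_k))\le\mu(g_k)=\mu(x_k)$, so Lemma \ref{sssa} applied to the freely independent families $\{\tfrac{1}{4}K_{\rm free}(g_k)\}$ and $\{x_k\}$ gives $\mu(K_{\rm free}(g))\le 4\mu(\sum_{k=1}^n x_k)$. Combining with the lower half of Lemma \ref{KB}, namely $\tfrac{2}{5}\sigma_{1/20}\mu(g)\le\mu(K_{\rm free}(g))$, and applying $\sigma_{20}$ to both sides produces the right-hand bound $\mu(\bigoplus_{k=1}^n x_k)\le 10\sigma_{20}\mu(\sum_{k=1}^n x_k)$.

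For the left-hand inequality in \eqref{two-sided}, the lower half of Lemma \ref{KB}, after rewriting via the substitution $t\mapsto t/20$, is equivalent to $\mu(x_k)\le\tfrac{5}{2}\sigma_{20}\mu(K_{\rm free}(g_k))$. To feed this into Lemma \ref{sssa}, I would rescale the trace: viewing the ambient algebra as $(L_\infty(\mathbb{F}_\infty),20\tau)$, the operators $K_{\rm free}(g_k)$ remain freely independent and have singular value functions $\mu_{20\tau}(K_{\rm free}(g_k))=\sigma_{20}\mu_\tau(K_{\rm free}(g_k))$. Setting $z_k:=\tfrac{5}{2}K_{\rm free}(g_k)$ in this rescaled setting gives $\mu(z_k)\ge\mu(x_k)$, so a mild extension of Lemma \ref{sssa} to the finite (but non-normalized) trace $20\tau$ produces $\mu(\sum_{k=1}^n x_k)\le\mu(\sum_{k=1}^n z_k)=\tfrac{5}{2}\sigma_{20}\mu_\tau(K_{\rm free}(g))\le 10\sigma_{20}\mu_\tau(g)$, where the final step uses the upper half of Lemma \ref{KB} once more. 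Applying $\sigma_{1/20}$ to both sides recovers the left-hand bound $\tfrac{1}{10}\sigma_{1/20}\mu(\sum_{k=1}^n x_k)\le\mu(\bigoplus_{k=1}^n x_k)$.

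The main technical hurdle will be the extension of Lemma \ref{sssa} needed in the lower-bound step: its stated proof is built around a free product of copies of $L_\infty(0,1)$ and therefore directly accommodates only distributions supported in $(0,1)$, whereas $\sigma_{20}\mu(K_{\rm free}(g_k))$ can be supported in an interval as long as $(0,20)$. The fix is routine — the same free-product construction works verbatim with $L_\infty(0,T)$ in place of $L_\infty(0,1)$ — but it is the one bookkeeping step where the argument goes beyond a direct combination of Lemmas \ref{KB}, \ref{sssa} and Theorem \ref{DF}.
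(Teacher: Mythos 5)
Your argument for the right-hand bound is correct and is essentially the paper's own argument for that direction: combine the two sides of Lemma \ref{KB} with Theorem \ref{DF} and the comparison Lemma \ref{sssa} (applied, as is legitimate since that lemma really only concerns distributions, to the sequences $\{\tfrac14 K_{\rm free}(g_k)\}$ and $\{x_k\}$), then post-compose with $\sigma_{20}$.

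The left-hand bound, however, rests on a false claim. The ``mild extension of Lemma \ref{sssa} to the non-normalized trace $20\tau$'' that you invoke does not hold. The obstruction is not the bookkeeping of replacing $L_\infty(0,1)$ by $L_\infty(0,T)$; it is that free additive convolution is defined relative to the \emph{normalized} trace, and dilating the trace by a factor $T$ does not commute with this operation. Concretely, take $n=2$, $T=20$, let $a_1,a_2$ be free projections of trace $\tfrac12$ and let $b_1,b_2$ be free projections of trace $\tfrac1{40}$. Then $\mu(a_k)=\chi_{(0,1/2)}=\sigma_{20}\mu(b_k)$, so the hypothesis of the extension holds with equality, yet $\mu(a_1+a_2)(0^+)=2$ (the law of the sum of two free trace-$\tfrac12$ projections is arcsine on $[0,2]$) while $\sigma_{20}\mu(b_1+b_2)(0^+)=\mu(b_1+b_2)(0^+)=1+2\sqrt{\tfrac1{40}\cdot\tfrac{39}{40}}<2$. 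Hence $\mu(a_1+a_2)\not\leq\sigma_{20}\mu(b_1+b_2)$, and the step ``$\mu(\sum x_k)\le\mu(\sum z_k)$'' in your rescaled setting cannot be deduced from a pointwise comparison of singular value functions in mismatched scales.

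The paper closes the same gap while staying entirely inside probability spaces: it introduces freely independent $z_k$ with $\mu(z_k)=\sigma_{1/20}\mu(x_k)$ and realizes each $x_k$ inside its own algebra as a sum $x_k=\sum_{l=1}^{20}z_{k,l}$ of $20$ disjoint copies of $z_k$, so that $\sum_k x_k=\sum_{l=1}^{20}\sum_k z_{k,l}$. The elementary estimate $\mu\bigl(\sum_{l=1}^{20}A_l\bigr)\leq\sum_{l=1}^{20}\sigma_{20}\mu(A_l)$, valid for any $20$ positive operators, then reduces the problem to bounding $\mu(\sum_k z_k)$, where Lemma \ref{sssa} applies directly because $\mu(z_k)=\sigma_{1/20}\mu(x_k)\leq\tfrac52\mu(K_{\rm free}g_k)$ are now genuine singular value functions on $(0,1)$. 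The dilation factor $\sigma_{20}$ is thus introduced through a decomposition of the operators, not through a rescaling of the trace; that is precisely the step your rescaling shortcut tries and fails to replace.
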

\begin{proof} Let $\{y_k\}_{k=1}^n\subset S(\mathcal{M},\tau)$ be a sequence of positive freely independent random variables such that $\mu(y_k)=\mu(K_{\rm free}x_k),$ $1\leq k\leq n.$ It follows from Lemma \ref{KB} that $\mu(y_k)\leq4\mu(x_k),$ $1\leq k\leq n.$ Therefore,
$$\mu(\sum_{k=1}^nx_k)\stackrel{L.\ref{sssa}}{\geq}\frac14\mu(\sum_{k=1}^ny_k)\stackrel{Th.\ref{DF}}{=}\frac14\mu\Big(K_{\rm free}\Big(\bigoplus_{k=1}^nx_k\Big)\Big)\stackrel{L.\ref{KB}}{\geq}\frac1{10}\sigma_{\frac1{20}}\mu\Big(\bigoplus_{k=1}^nx_k\Big).$$
Multiplying both parts of the inequality above by $10$ and applying to them $\sigma_{20}$, we arrive at the right hand side estimate in \eqref{two-sided}.

In order to prove the left hand side estimate in \eqref{two-sided}, let $\{z_k\}_{k=1}^n\subset S(\mathcal{M},\tau)$ be a sequence of freely independent random variables such that $\mu(z_k)=\sigma_{\frac1{20}}\mu(x_k),$ and let $u_k:=\bigoplus_{k=1}^{20}\mu(z_k)\in L_{\infty}(0,1), \ k=1,2,..., n$. Clearly, $\mu(u_k)=\mu(x_k)$, $1\leq k\leq n$ and therefore, without loss of generality, let $i_k:L_{\infty}(0,1)\to M_k$ be trace preserving $*$-homomorphisms from Lemma \ref{embedding} such that $i_k(u_k)=x_k$, $k=1,2,..., n$. Setting 
$$z_{k,l}=i_k(0^{\oplus(l-1)}\oplus \mu(z_k)\oplus 0^{\oplus (20-l)}),\quad 1\leq k\leq n,\ 1\leq l\leq 20
$$ we arrive at
$$\sum_{k=1}^nx_k=\sum_{l=1}^{20}\sum_{k=1}^nz_{k,l}$$ and, therefore,
$$\mu(\sum_{k=1}^nx_k)\leq \sum_{l=1}^{20}\sigma_{20}\mu(\sum_{k=1}^nz_{k,l}) = 20\sigma_{20}\mu(\sum_{k=1}^nz_k).$$
It follows from Lemma \ref{KB} that $\mu(z_k)\leq\frac52\mu(y_k),$ $1\leq k\leq n.$ Therefore, we have
$$\mu(\sum_{k=1}^nz_k)\stackrel{L.\ref{sssa}}{\leq}\frac52\mu(\sum_{k=1}^ny_k)\stackrel{Th.\ref{DF}}{=}\frac52\mu\Big(K_{\rm free}\Big(\bigoplus_{k=1}^nx_k\Big)\Big)\stackrel{L.\ref{KB}}{\leq}10\mu\Big(\bigoplus_{k=1}^nx_k\Big).$$
\end{proof}

\begin{proof}[Proof of Theorem \ref{main results 2} \eqref{mainfa}] Recall that $X=\bigoplus_{k=1}^nx_k.$ We can approximate each $x_k$ in the uniform norm with freely independent random variables without discrete spectrum. Thus, we may assume without loss of generality that $e_{\{t\}}(x_k)=0$ for every $t>0$ and for every $1\leq k\leq n.$ Equivalently, $e_{\{t\}}(X)=0$ for every $t>0.$ Set 
$$A_{1,k}=x_k e_{(\mu(1,X),\infty)}(x_k),\quad A_{2,k}=x_ke_{(0,\mu(1,X))}(x_k),\quad 1\leq k\leq n.$$
Random variables $A_{1,k}$ (respectively, $A_{2,k}$) $1\leq k\leq n,$ belong to the algebras generated by respective $A_k$ and are, therefore, freely independent. We have
\begin{equation}\label{disjoint_sums}
\mu\Big(\bigoplus_{k=1}^nA_{1,k}\Big)=\mu(X)\chi_{(0,1)},\quad \mu\Big(\bigoplus_{k=1}^nA_{2,k}\Big)=\mu(\mu(X)\chi_{(1,\infty)}).
\end{equation}
By \cite[Lemma 2.5(iv)]{FK}, we have 
$$\mu(\Phi(\sum_{k=1}^nA_{1,k}))=\Phi(\mu (\sum_{k=1}^nA_{1,k}))$$
and therefore, by the left hand side estimate in Lemma \ref{head lemma}, we obtain 
$$\mu(\Phi(\sum_{k=1}^nA_{1,k}))\leq \Phi (10\sigma_{20} \mu(\bigoplus_{k=1}^nA_{1,k})).$$
The latter estimate together with \eqref{disjoint_sums} yield
\begin{equation}\label{mainfa1}
\tau(\Phi(\sum_{k=1}^nA_{1,k}))\leq \mathbb{E}(\Phi(10\mu(X)\chi_{(0,1)}))\leq C_{\Phi}\mathbb{E}(\Phi(\mu(X)\chi_{(0,1)})),
\end{equation}
By \cite[Corollary 3.3]{SZ} we have
$$\|\sum_{k=1}^nA_{2,k}\|_{\infty}\leq 64\|\bigoplus_{k=1}^nA_{2,k}\|_{L_1\cap L_{\infty}}\leq 128\|X\|_1.$$
Therefore,
\begin{equation}\label{mainfa2}
\tau(\Phi(\sum_{k=1}^nA_{2,k}))\leq\Phi(\|\sum_{k=1}^nA_{2,k}\|_{\infty})\leq\Phi(128\|X\|_1)\leq C_{\Phi}\Phi(\|X\|_1).
\end{equation}
Since $\Phi$ satisfies $\Delta_2$-condition, it follows from \eqref{noncommineq13} that
$$\tau(\Phi(\sum_{k=1}^nx_k))\leq C_\Phi(\tau(\Phi(\sum_{k=1}^nA_{1,k}))+\tau(\Phi(\sum_{k=1}^nA_{2,k}))).$$
Combining the preceding estimate with \eqref{mainfa1} and \eqref{mainfa2} and recalling that $\sum_{k=1}^nx_k=\sum_{k=1}^n(A_{1,k}+A_{2,k})$, we arrive at
\begin{equation}\label{mainfa2-half}
\tau(\Phi(\sum_{k=1}^nx_k))\leq C_{\Phi}(\mathbb{E}(\Phi(\mu(X)\chi_{(0,1)}))+\Phi(\|X\|_1)).
\end{equation}
To complete the proof of Theorem \ref{main results 2} \eqref{mainfa}, it remains to prove the converse inequality to \eqref{mainfa2-half}. To that end, 
we observe that by Lemma \ref{sssa}, we have $$\tau(\Phi(\sum_{k=1}^nA_{1k}))\leq \tau(\Phi(\sum_{k=1}^nx_k)),$$ and by the right hand side estimate in Lemma \ref{head lemma}, we have $$\mathbb{E}(\Phi(\frac1{10}\sigma_{1/20}\mu(X)\chi_{(0,1)}))\leq \tau(\Phi(\sum_{k=1}^nA_{1k})).$$
Observing that 
$$
\mathbb{E}(\Phi(\mu(X)\chi_{(0,1)}))\leq C_{\Phi}\mathbb{E}(\Phi(\frac1{10}\sigma_{1/20}\mu(X)\chi_{(0,1)})).
$$
we arrive at
\begin{equation}\label{mainfa3}
\mathbb{E}(\Phi(\mu(X)\chi_{(0,1)}))\leq C_{\Phi}\tau(\Phi(\sum_{k=1}^nx_k)).
\end{equation}
Finally, by Jensen inequality \eqref{Jensen}, we have 
\begin{equation}\label{mainfa4}
\tau\Big(\Phi\big(\sum_{k=1}^nx_k\big)\Big)\geq\Phi\Big(\|\sum_{k=1}^nx_k\|_1\Big)=\Phi(\|X\|_1).
\end{equation}
Combining \eqref{mainfa3} and \eqref{mainfa4}, we arrive at the converse inequality to \eqref{mainfa2-half}.
\end{proof}

The following proposition will be needed for the proof of Theorem \ref{main results 2} \eqref{mainfb}.

\begin{proposition}\label{S} Let $(\mathcal{M},\tau)$ be a noncommutative probability space and let $\Phi$ be an Orlicz function satisfying $\Delta_2$-condition. If $(x_k)_{k=1}^n\subset L_{\Phi}(\mathcal{M},\tau)$ are freely independent symmetrically distributed random variables, then
$$\mathbb{E}(\Phi(\mu(X)\chi_{(0,1)}))\leq C_{\Phi}\tau(\Phi(\sum_{k=1}^nx_k)),\quad X=\bigoplus_{k=1}^nx_k.$$
\end{proposition}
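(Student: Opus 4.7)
The plan is to mimic the argument for the positive case \eqref{mainfa}, performing a symmetric truncation, reducing via Lemma \ref{head lemma} to an estimate on the sum of positive moduli, and then invoking a free L\'{e}vy-type step to return to $\sum_k x_k$. As in the proof of \eqref{mainfa}, I would first assume without loss of generality that $e_{\{t\}}(x_k)=0$ for every $t>0$ and $1\leq k\leq n$. Then I would truncate by setting
$$A_{1,k}:=x_k\, e_{(\mu(1,X),\infty)}(|x_k|),\qquad A_{2,k}:=x_k-A_{1,k}.$$
Since the spectral projection $e_{(\mu(1,X),\infty)}(|x_k|)$ lies in $W^*(x_k)$, the sequences $(A_{1,k})_k$ and $(A_{2,k})_k$ remain freely independent. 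Because $|x_k|$ is an even function of the symmetric $x_k$, each $A_{j,k}$ inherits a symmetric distribution, and in particular $(|A_{1,k}|)_k$ is a freely independent sequence of positive operators. By the definition of $\mu(1,X)$ one obtains
$$\sum_{k=1}^n\tau(\text{supp}(A_{1,k}))\leq 1,\qquad \mu(\bigoplus_k A_{1,k})=\mu(\bigoplus_k|A_{1,k}|)=\mu(X)\chi_{(0,1)}.$$

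Next, I would apply the right-hand inequality of Lemma \ref{head lemma} to the positive freely independent family $(|A_{1,k}|)_k$, which gives $\mu(\bigoplus_k|A_{1,k}|)\leq 10\,\sigma_{20}\,\mu(\sum_k|A_{1,k}|)$. Combining this with \eqref{traceequality} and the $\Delta_2$-condition yields
$$\mathbb{E}(\Phi(\mu(X)\chi_{(0,1)}))\leq C_\Phi\,\tau(\Phi(\sum_k|A_{1,k}|)),$$
and so the proof reduces to establishing the modular estimate
$$\tau(\Phi(\sum_k|A_{1,k}|))\leq C_\Phi\,\tau(\Phi(\sum_k x_k)).$$

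I expect this sign-elimination step to be the main obstacle. It is the free analog of the L\'{e}vy-type estimate $\mathbb{P}\{\max|f_k|>t\}\leq 2\mathbb{P}\{|\sum f_k|>t\}$ used in Theorem \ref{sym js lhs}, for which there is no immediate free-probabilistic counterpart. My plan is to adapt the symmetrization machinery developed for the symmetric case of the free Johnson--Schechtman inequalities in \cite{SZ}: one polar-decomposes $A_{1,k}=s_k|A_{1,k}|$ with $s_k$ a self-adjoint symmetry in $W^*(x_k)$, exploits the sparse-support condition $\sum_k\tau(\text{supp}(A_{1,k}))\leq 1$ and the joint sign-invariance coming from the symmetric distribution of $(x_k)$ to replace the moduli at the cost of a multiplicative constant, and compares $\sum_k A_{1,k}$ with $\sum_k x_k$. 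The residual $\sum_k A_{2,k}=\sum_k x_k-\sum_k A_{1,k}$ is uniformly bounded by $128\|X\|_1$ via \cite[Corollary 3.3]{SZ}, so by Jensen's inequality $\Phi(128\|X\|_1)\leq C_\Phi\tau(\Phi(\sum_k x_k))$, and its $\Phi$-moment contribution can be absorbed into the constants through \eqref{noncommineq13}. The hardest part will be recasting the Banach-norm argument of \cite{SZ} in modular $\Phi$-moment form, since that argument is less robust in the absence of scaling homogeneity.
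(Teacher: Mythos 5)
Your decomposition into $A_{1,k}$ and $A_{2,k}$ and the reduction of the first piece to $\mathbb{E}(\Phi(\mu(X)\chi_{(0,1)}))\le C_\Phi\,\tau(\Phi(\sum_k|A_{1,k}|))$ via Lemma \ref{head lemma} is sound, and you correctly identify that the missing ingredient is a free analogue of the L\'evy-type inequality allowing one to trade $\sum_k|A_{1,k}|$ for $\sum_k x_k$. But that is exactly the step you do not supply: you sketch a plan (polar decomposition, sparse supports, joint sign invariance) and then acknowledge that ``the hardest part will be recasting the Banach-norm argument of \cite{SZ} in modular $\Phi$-moment form.'' As written, the proposal is an incomplete attack, not a proof, because the sign-elimination estimate is precisely the content of the proposition and is left open.

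The paper avoids the obstacle altogether with a scaling device you do not consider. Observe that for every $t>0$ the function $\Phi_t:=t^{-1}\Phi$ is again an Orlicz function, so the norm inequality $\|\mu(X)\chi_{(0,1)}\|_{L_{\Phi_t}}\le 3600\,\|\sum_{k}x_k\|_{L_{\Phi_t}}$ from \cite[Proposition 44]{SZ} holds uniformly in $t$. Choosing $t$ so that $\|\sum_k x_k\|_{L_{\Phi_t}}=1/3600$ (equivalently $\tau(\Phi(3600\sum_k x_k))=t$, by \cite[Chapter III, Theorem 3]{RR}) forces $\|\mu(X)\chi_{(0,1)}\|_{L_{\Phi_t}}\le 1$, which unwinds to $\mathbb{E}(\Phi(\mu(X)\chi_{(0,1)}))\le t=\tau(\Phi(3600\sum_k x_k))$, and $\Delta_2$ removes the factor $3600$. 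This converts the family of Luxemburg-norm inequalities indexed by $t$ directly into the modular inequality, sidestepping both the truncation and the sign elimination. You had the right instinct that the obstruction is the lack of scaling homogeneity of $\Phi$-moments; the $\Phi_t$ trick is precisely the standard way to restore it, and that is the idea your proposal lacks.
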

\begin{proof} For every $t>0,$ the function $\Phi_t:=t^{-1}\Phi$ is an Orlicz function and, therefore, $L_{\Phi_t}$ is a (noncommutative) Orlicz space (a symmetric operator space equipped with a Fatou norm \cite{SC, DDP, CS}).  By \cite[Proposition 44]{SZ}, we have 
\begin{equation}\label{88}
\Big\|\mu(X)\chi_{(0,1)}\Big\|_{L_{\Phi_t}} \leq 3600 \Big\|\sum_{k=1}^nx_k\Big\|_{L_{\Phi_t}},\quad \forall t>0.
\end{equation}
Let $t>0$ be such that
\begin{equation*}
\tau(\Phi(3600\sum_{k=1}^nx_k))=t,
\end{equation*}
or, equivalently (see \cite[Chapter III, Theorem 3]{RR}), such that $\Big\|\sum_{k=1}^nx_k\Big\|_{L_{\Phi_t}}=\frac1{3600}$.
It follows from \eqref{88} that
$$\Big\|\mu(X)\chi_{(0,1)}\Big\|_{L_{\Phi_t}}\leq 1,$$
which is the same as (again referring to \cite[Chapter III, Theorem 3]{RR}) $$\mathbb{E}(\Phi(\mu(X)\chi_{(0,1)}))\leq t.$$
Hence, applying consequently \eqref{ineq13} we arrive at
$$\mathbb{E}(\Phi(\mu(X)\chi_{(0,1)}))\leq\tau(\Phi(3600\sum_{k=1}^nx_k))\leq C_{\Phi}\tau(\Phi(\sum_{k=1}^nx_k)).$$
\end{proof}

Now, we are in a position to furnish the proof of Theorem \ref{main results 2} \eqref{mainfb}. It has some similarities with the proof of Theorem \ref{main results 2} \eqref{mainfa}, however, some important details are different.

\begin{proof}[Proof of Theorem \ref{main results 2} \eqref{mainfb}] Without loss of generality, $e_{\{t\}}(|X|)=0$ for every $t>0.$ Set 
$$A_{1,k}=x_k e_{(\mu(1,X),\infty)}(|x_k|),\quad A_{2,k}=x_ke_{(0,\mu(1,X))}(|x_k|),\quad 1\leq k\leq n.$$
The sequence $\{A_{1,k}\}_{k=1}^n$ (respectively, $\{A_{2,k}\}_{k=1}^n$) consists of freely independent symmetrically distributed random variables and
$$\mu\Big(\bigoplus_{k=1}^nA_{1,k}\Big)=\mu(X)\chi_{(0,1)},\quad \mu\Big(\bigoplus_{k=1}^nA_{2,k}\Big)=\mu(\mu(X)\chi_{(1,\infty)}).$$
Using standard Jordan decomposition, we further write
$$
A_{1,k}=A^+_{1,k}-A^-_{1,k},\quad 1\leq k\leq n
$$
and observe that the sequence $\{A^+_{1,k}\}_{k=1}^n$ and  $\{A^-_{1,k}\}_{k=1}^n$ consist of freely independent positive random variables such that
$$
\mu\Big(\bigoplus_{k=1}^nA^+_{1,k}\Big),\ \mu\Big(\bigoplus_{k=1}^nA^-_{1,k}\Big)\leq \mu\Big(\bigoplus_{k=1}^nA_{1,k}\Big).
$$
Now, using the argument in the proof of Theorem \ref{main results 2} \eqref{mainfa} (see, in particular, \eqref{mainfa1} and preceding to it estimate) to justify the first inequality below, we obtain
\begin{equation}\label{mainfb1}
\tau(\Phi(\sum_{k=1}^nA^+_{1,k}))\leq C_{\Phi}\tau(\Phi \Big(\bigoplus_{k=1}^nA^+_{1,k}\Big))\leq C_{\Phi}\mathbb{E}(\Phi(\mu(X)\chi_{(0,1)})),
\end{equation}
and
\begin{equation}\label{mainfb2}
\tau(\Phi(\sum_{k=1}^nA^-_{1,k}))\leq C_{\Phi}\tau(\Big(\bigoplus_{k=1}^nA^-_{1,k}\Big))\leq C_{\Phi}\mathbb{E}(\Phi(\mu(X)\chi_{(0,1)}))
\end{equation}
Appealing now to \eqref{noncommineq13} and combining \eqref{mainfb1} and \eqref{mainfb2}, we arrive at
\begin{equation}\label{mainfb3}
\tau(\Phi(\sum_{k=1}^nA_{1,k}))\leq C_{\Phi}\mathbb{E}(\Phi(\mu(X)\chi_{(0,1)})).
\end{equation}
In order to deal with the sequence $\{A_{2,k}\}_{k=1}^n$, we firstly recall  that by \cite[Corollary 33(b)]{SZ} we have
$$\|\sum_{k=1}^nA_{2,k}\|_{\infty}\leq 64\|\bigoplus_{k=1}^nA_{2,k}\|_{L_2\cap L_{\infty}}\leq 128\|\mu(X)\chi_{(1,\infty)}\|_{L_2\cap L_{\infty}}
.$$
We  now appeal to a well known formula of T. Holmstedt \cite[Theorem 4.2]{H}, which in our special case yields
$$
\|\mu(X)\|_{L_1+L_2}\approx \|\mu(X)\chi_{[0,1]}\|_{L_1}+\|\mu(X)\chi_{[1,\infty]}\|_{L_2}.
$$
An immediate corollary of this formula is that $\|\mu(X)\chi_{(1,\infty)}\|_{L_2\cap L_{\infty}}\approx \|\mu(X)\|_{L_1+L_2}$ and
therefore,
\begin{equation}\label{mainfb4}
\tau(\Phi(\sum_{k=1}^nA_{2,k}))\leq\Phi(\|\sum_{k=1}^nA_{2,k}\|_{\infty})\leq C_{\Phi}\Phi(\|X\|_{L_1+L_2}).
\end{equation}
Hence, it follows from \eqref{mainfb3} and \eqref{mainfb4} that
\begin{equation}\label{mainfb5}
\tau(\Phi(\sum_{k=1}^nx_k))\leq C_{\Phi}(\mathbb{E}(\Phi(\mu(X)\chi_{(0,1)}))+\Phi(\|X\|_{L_1+L_2})).
\end{equation}
To complete the proof of Theorem \ref{main results 2} \eqref{mainfb}, we need to verify the converse inequality to \eqref{mainfb5}. To this end, recall that by Proposition \ref{S} we have
\begin{equation}\label{mainfb6}
\mathbb{E}(\Phi(\mu(X)\chi_{(0,1)}))\leq C_{\Phi}\tau(\Phi(\sum_{k=1}^nx_k)),
\end{equation}
and by \cite[Proposition 43]{SZ} we have
$$\|X\|_{L_1+L_2}\leq 64\|\sum_{k=1}^nx_k\|_1.$$
Combining the preceding inequality with \eqref{ineq13}, we infer 
$$
\Phi(\|X\|_{L_1+L_2}) \leq \Phi\Big(64\|\sum_{k=1}^nx_k\|_1\Big) \leq C_{\Phi}\Phi\Big(\|\sum_{k=1}^nx_k\|_1\Big).
$$
Now, recalling once more Jensen inequality \eqref{Jensen} and firstly estimating 
$$\Phi\Big(\|\sum_{k=1}^nx_k\|_1\Big)\leq \tau\Big(\Phi\big(\sum_{k=1}^nx_k\big)\Big),$$
and then combining preceding inequalities with \eqref{mainfb6}, we conclude that
$$ (\mathbb{E}(\Phi(\mu(X)\chi_{(0,1)}))+\Phi(\|X\|_{L_1+L_2})) \leq C_{\Phi}\tau(\Phi(\sum_{k=1}^nx_k))  .$$
\end{proof}

The $\Delta_2$-condition in Theorem \ref{main results 2} cannot be weakened. We now prove Theorem \ref{sharp} \eqref{sharpb}.
\begin{proof}[Proof of Theorem \ref{sharp} (ii)] Fix a real number $a>0.$ Let $\{x_{k,n}\}_{k=1}^n$  be a sequence of freely independent random variables such that $\mu (x_{k,n})=a\chi_{(0,1/n)}.$ The proof goes along the lines of that of Theorem \ref{sharp} (i). Instead of Fourier transform, we use free cumulants\footnote{Free cumulant $\kappa_m,$ $m\geq 1,$ is a polynomial expression in terms of the moments of the random variable $x$ (see (12.2) and Example 12.4 in \cite{NS1}). Its crucial feature (see Proposition 12.3 in \cite{NS1}) can be stated as follows: if the random variables $x_k,$ $1\leq k\leq n,$ are freely independent, then
$$\kappa_m(\sum_{k=1}^nx_k)=\sum_{k=1}^n\kappa_m(x_k).$$
In fact, the converse assertion also holds true (we do not need this fact). It follows from construction of the free cumulants in \cite{NS1} that the moments are also polynomial expressions in terms of the cumulants.}. By the definition of free cumulants (see the book \cite{NS1} or formula (3) in \cite{SZ}), we have that\footnote{In the latter formula, $NC(m)$ are so-called non-crossing partitions of the set $\{1,\cdots,m\}$ and ${\rm Moeb}(\pi,\mathbf{1}_m)$ are constant coefficients (we omit their definitions and refer the interested reader to the book \cite{NS1}). The symbol $|\cdot|$ in the formula below stands for the cardinality.}
$$\kappa_m(\sum_{k=1}^nx_{k,n})=n\kappa_m(a\chi_{(0,1/n)})=n\sum_{\pi\in NC(m)}{\rm Moeb}(\pi,\mathbf{1}_m)\prod_{V\in\pi}\tau((a\chi_{(0,\frac1n)})^{|V|})=$$
$$=na^m\sum_{\pi\in NC(m)}{\rm Moeb}(\pi,\mathbf{1}_m)n^{-|\pi|}\to a^m=\kappa_m(aK_{\rm free}1)$$
as $n\to\infty.$ Here, the equality $a^m=\kappa_m(aK_{\rm free}1)$ follows from \cite[Lemma 24]{SZ}. Using formula (5) in \cite{SZ}, we infer that
$$\tau((\sum_{k=1}^nx_{k,n})^m)\to\tau((aK_{\rm free}1)^m)$$
as $n\to\infty.$ Convergence of moments implies convergence in distribution (see Preliminaries). Hence,
$$\mu(\sum_{k=1}^nx_{k,n})\longrightarrow\mu(a K_{\rm free}1)$$
almost everywhere. We also have $\bigoplus_{k=1}^nx_{k,n}=a.$ It follows from the Fatou theorem, that 
$$\tau(\Phi(aK_{\rm free}1))=\tau(\Phi(\mu(a K_{\rm free}1)))\leq\liminf_{n\to\infty}\tau(\Phi(\mu(\sum_{k=1}^nx_{k,n})))=$$
$$=\liminf_{n\to\infty}\tau(\Phi(\sum_{k=1}^nx_{k,n}))\leq C_\Phi\liminf_{n\to\infty}\tau(\Phi(\bigoplus_{k=1}^nx_{k,n}))=C_{\Phi}\Phi(a),$$
where the last inequality is guaranteed by the assumption of Theorem \ref{sharp} (ii).
Using the same arguments as in the proof of Theorem \ref{sharp} (i), we firstly observe that
$$ \Phi(2a)\tau(e_{(2,\infty)}(K_{\rm free}1))\leq \tau(\Phi(aK_{\rm free}1)),$$
and then infer that $\Phi$ satisfies $\Delta_2$-condition. The proof is complete.
\end{proof}

\section{Johnson-Schechtman-type maximal inequalities}
\subsection{ $\Phi$-moment maximal inequalities}

\begin{proof}[Proof of Theorem \ref{Mright}] Let the random variables $A_{1,k},$ $1\leq k\leq n,$ be as in the proof of Theorem \ref{main results 2}. Take 
$$a=\sum_{k=1}^nA_{1,k}+\mu(1,X).$$
It is obvious that $a\geq x_k,$ $1\leq k\leq n.$ Since $\Phi$ satisfies $\Delta_2$-condition, it follows from Theorem \ref{main results 2} that
$$\tau(\Phi(a))\leq C_{\Phi}(\tau(\Phi(\sum_{k=1}^nA_{1,k}))+\Phi(\mu(1,X)))\leq C_{\Phi}(\tau(\Phi(\bigoplus_{k=1}^nA_{1,k}))+\Phi(\mu(1,X)))=$$
$$=C_{\Phi}(\mathbb{E}(\Phi(\mu(X)\chi_{(0,1)}))+\Phi(\mu(1,X)))\leq C_{\Phi}\mathbb{E}(\Phi(\mu(X)\chi_{(0,1)})).$$

In order to prove the converse inequality, we denote $p_k={\rm supp}(A_{1,k}).$ These are freely independent random variables. By  in \cite[Corollary 33]{SZ}, we have
$$\|\sum_{k=1}^np_k\|_{\infty}\leq 64\|\bigoplus_{k=1}^np_k\|_{L_1\cap L_{\infty}}\leq 64.$$
Obviously, $A_{1,k}\leq a$ and, therefore, $A_{1,k}\leq p_kap_k$ for $1\leq k\leq n.$ It follows from  \cite[Proposition 4.6(ii)]{FK} that
$$\tau(\Phi(A_{1,k}))\leq\tau(\Phi(p_kap_k))\leq\tau(p_k\Phi(a)p_k),\quad 1\leq k\leq n.$$
It follows from \eqref{disjoint_sums} that $\mathbb{E}(\Phi(\mu(X)\chi_{(0,1)}))=\sum_{k=1}^n\tau(\Phi(A_{1,k})$ and hence
$$\mathbb{E}(\Phi(\mu(X)\chi_{(0,1)}))\leq\sum_{k=1}^n\tau(p_k\Phi(a)p_k)=\tau(\Phi(a)\sum_{k=1}^np_k).$$
Therefore,
$$\mathbb{E}(\Phi(\mu(X)\chi_{(0,1)}))\leq\tau(\Phi(a))\cdot\|\sum_{k=1}^np_k\|_{\infty}\leq 64\tau(\Phi(a)).$$
\end{proof}

We also state a similar inequality for operator monotone functions.

\begin{proposition}\label{KPM}
Let $(x_k)_{k=1}^n$ be positive freely independent random variables. If $\Phi$ is an operator monotone function, then
$$\inf\big\{\tau(\Phi(a)):\ a\geq x_k,\ 1\leq k\leq n\big\}\approx_{C_{\Phi}} \mathbb{E}(\Phi(\mu(X)\chi_{(0,1)})),\quad X=\bigoplus_{k=1}^nx_k.$$
\end{proposition}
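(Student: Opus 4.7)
The plan is to follow the structure of the proof of Theorem \ref{Mright}, with Rotfel'd's trace inequality playing the role of the $\Delta_2$-based reductions. By L\"owner's theorem, any operator monotone function on $[0,\infty)$ with $\Phi(0)=0$ is concave, so Rotfel'd's theorem applies: $\tau(\Phi(x+y))\leq\tau(\Phi(x))+\tau(\Phi(y))$ for all positive $x,y\in S(\mathcal{M},\tau)$. As in the proof of Theorem \ref{Mright}, I may assume $e_{\{t\}}(x_k)=0$ for every $t>0$; set
\[
A_{1,k}:=x_ke_{(\mu(1,X),\infty)}(x_k),\qquad p_k:={\rm supp}(A_{1,k})=e_{(\mu(1,X),\infty)}(x_k),
\]
and choose the test element $a:=\sum_{k=1}^nA_{1,k}+\mu(1,X)\cdot 1$, which satisfies $a\geq x_k$ for every $k$.

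For the upper bound, two iterations of Rotfel'd's inequality yield
\[
\tau(\Phi(a))\leq \sum_{k=1}^n\tau(\Phi(A_{1,k}))+\Phi(\mu(1,X)).
\]
Since $\mu\big(\bigoplus_{k=1}^nA_{1,k}\big)=\mu(X)\chi_{(0,1)}$, the sum on the right equals $\mathbb{E}(\Phi(\mu(X)\chi_{(0,1)}))$, while the remaining term is dominated by the same quantity because $\Phi$ is increasing and $\mu(X)\chi_{(0,1)}\geq \mu(1,X)$ on $(0,1)$. Hence $\tau(\Phi(a))\leq 2\mathbb{E}(\Phi(\mu(X)\chi_{(0,1)}))$.

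For the converse direction, let $a'\geq x_k$ for every $k$ be arbitrary. Since $A_{1,k}\leq x_k\leq a'$, operator monotonicity of $\Phi$ gives $\Phi(A_{1,k})\leq \Phi(a')$; because $A_{1,k}$ is supported on $p_k$ and $\Phi(0)=0$, so is $\Phi(A_{1,k})$, which turns the compression into an equality and yields $\Phi(A_{1,k})=p_k\Phi(A_{1,k})p_k\leq p_k\Phi(a')p_k$. Summing and invoking the tracial property,
\[
\mathbb{E}(\Phi(\mu(X)\chi_{(0,1)}))=\sum_{k=1}^n\tau(\Phi(A_{1,k}))\leq \tau\Big(\Phi(a')\sum_{k=1}^np_k\Big)\leq \Big\|\sum_{k=1}^np_k\Big\|_\infty\tau(\Phi(a')).
\]
The projections $p_k$ lie in the von Neumann algebras generated by the $x_k$, so they are freely independent, and $\sum_k\tau(p_k)\leq 1$ by the definition of $\mu(1,X)$; \cite[Corollary 33]{SZ} then yields $\|\sum_kp_k\|_\infty\leq 64$. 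Taking the infimum over $a'$ completes the proof.

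The main subtlety compared to the proof of Theorem \ref{Mright} is the direction of Hansen-type inequalities: for operator concave $\Phi$ one has $\Phi(pap)\geq p\Phi(a)p$, which is the \emph{wrong} direction for the step passing from $\tau(\Phi(p_kap_k))$ to $\tau(p_k\Phi(a)p_k)$ used there. This is bypassed above by exploiting the identity $\Phi(A_{1,k})=p_k\Phi(A_{1,k})p_k$ to compare $\Phi(A_{1,k})$ with $p_k\Phi(a')p_k$ using only operator monotonicity; the replacement of Theorem \ref{main results 2} by Rotfel'd's inequality is in fact a simplification, since the latter needs no independence of the summands.
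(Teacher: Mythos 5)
Your proof is correct, and the converse direction (establishing $\mathbb{E}(\Phi(\mu(X)\chi_{(0,1)}))\leq 64\,\tau(\Phi(a'))$) coincides exactly with the paper's: both use $\Phi(A_{1,k})=p_k\Phi(A_{1,k})p_k\leq p_k\Phi(a')p_k$ (the key observation that operator monotonicity plus $\Phi(0)=0$ makes the compression step go the right way, bypassing the Hansen inequality issue you flag) followed by $\|\sum_kp_k\|_\infty\leq 64$ from \cite[Corollary 33]{SZ}.

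The forward direction is where you genuinely diverge, and your route is arguably cleaner. The paper disposes of this direction by writing that it ``can be proved as in Theorem \ref{Mright}''; but the proof of Theorem \ref{Mright} relies on \eqref{noncommineq13} (the $\Delta_2$-based subadditivity for \emph{convex} Orlicz $\Phi$) and on Theorem \ref{main results 2}, which is stated only for Orlicz functions satisfying $\Delta_2$. An operator monotone $\Phi$ with $\Phi(0)=0$ is concave, hence not an Orlicz function, so this appeal requires re-justification (it does go through, since concavity with $\Phi(0)=0$ gives $\Phi(at)\leq a\Phi(t)$ for $a\geq 1$, so one can run the singular-value comparison of Lemma \ref{head lemma} directly; the paper simply leaves this implicit). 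Your substitution of Rotfel'd's trace inequality $\tau(\Phi(x+y))\leq\tau(\Phi(x))+\tau(\Phi(y))$ for positive $x,y$ and concave $\Phi$ with $\Phi(0)=0$ is a genuine simplification: it needs no free independence of the summands, no Kruglov-operator machinery, and no dilation estimate, and it gives the explicit constant $2$ on the forward side. The only prerequisite is the standard von Neumann algebra version of Rotfel'd's theorem, which is well established. In short, your proof is correct and streamlines the paper's forward estimate while reproducing its converse estimate verbatim.
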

\begin{proof} That the left hand side does not exceed the right hand side can be proved as in Theorem \ref{Mright}. We only prove the converse inequality. Though visually similar to the proof of Theorem \ref{Mright}, this proof contains specific details worth emphasizing.

Let the random variables $A_{1,k},$ $1\leq k\leq n,$ be as in the proof of Theorem \ref{main results 2} and let $p_k={\rm supp}(A_{1,k}),$ $1\leq k\leq n.$ Since $\Phi$ is an operator monotone function, it follows that 
$$\Phi(A_{1,k})=p_k\Phi(A_{1,k})p_k\leq p_k\Phi(a)p_k,\quad 1\leq k\leq n.$$
Hence,
$$\mathbb{E}(\Phi(\mu(X)\chi_{(0,1)}))=\sum_{k=1}^n\tau(\Phi(A_{1,k}))\leq\sum_{k=1}^n\tau(p_k\Phi(a)p_k)=\tau(\Phi(a)\sum_{k=1}^np_k)$$
and, therefore,
$$\mathbb{E}(\Phi(\mu(X)\chi_{(0,1)}))\leq\tau(\Phi(a))\cdot\|\sum_{k=1}^np_k\|_{\infty}\leq 64\tau(\Phi(a)).$$
\end{proof}

Combining results of Theorem \ref{Mright} and Proposition \ref{KPM}, we emphasize the following important case.

\begin{corollary}\label{cp} Let $0<p\leq\infty$ and $(x_k)_{k=1}^n\subset L_p(\mathcal{M},\tau)$ be positive freely independent random variables. We have
$$\inf\big\{\|a\|_p:\ a\geq x_k,\ 1\leq k\leq n\big\}\approx_{C_p} \|\mu(X)\chi_{(0,1)}\|_p,\quad X=\bigoplus_{k=1}^nx_k.$$
\end{corollary}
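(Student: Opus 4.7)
The plan is to derive the corollary by specializing the two preceding general results to $\Phi(t)=t^p$, treating the ranges of $p$ according to which hypothesis is satisfied.

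For $1\le p<\infty$, the function $\Phi(t)=t^p$ is a convex Orlicz function satisfying the $\Delta_2$-condition (with constant $2^p$), so Theorem \ref{Mright} applies and gives
$$\inf\bigl\{\tau(a^p):\ a\ge x_k,\ 1\le k\le n\bigr\}\approx_{C_p}\mathbb{E}\bigl(\mu(X)^p\chi_{(0,1)}\bigr).$$
Since the map $t\mapsto t^{1/p}$ is strictly increasing on $[0,\infty)$, it commutes with infima of nonnegative quantities, and $\tau(a^p)^{1/p}=\|a\|_p$ for positive $a$. Taking $p$-th roots on both sides therefore yields the claimed equivalence of $\inf\|a\|_p$ with $\|\mu(X)\chi_{(0,1)}\|_p$, with constant $C_p^{1/p}$.

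For $0<p\le 1$, the function $\Phi(t)=t^p$ is operator monotone on $[0,\infty)$ by the Löwner--Heinz inequality, so Proposition \ref{KPM} applies instead of Theorem \ref{Mright} and the same root-taking argument yields the conclusion. Note that the two ranges overlap at $p=1$, where both results are available and agree.

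The case $p=\infty$ is handled separately by a direct argument: if $a\ge x_k$ for every $k$ then $\|a\|_\infty\ge\|x_k\|_\infty$, hence $\inf\|a\|_\infty\ge\max_k\|x_k\|_\infty=\|\mu(X)\chi_{(0,1)}\|_\infty$, while conversely $a=(\max_k\|x_k\|_\infty)\cdot 1$ is an admissible majorant realising this value. The main (and only) obstacle is essentially bookkeeping: observing that the hypotheses of Theorem \ref{Mright} and Proposition \ref{KPM} together cover all $p\in(0,\infty)$ through the pair Orlicz-$\Delta_2$/operator monotone, and handling $p=\infty$ by hand since neither result applies to $\Phi(t)=t^\infty$ directly.
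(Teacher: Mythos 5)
Your proof is correct and follows the paper's intended derivation: the corollary is stated in the paper precisely as a consequence of combining Theorem \ref{Mright} (covering $1\leq p<\infty$ via $\Phi(t)=t^p$ being a $\Delta_2$ Orlicz function) and Proposition \ref{KPM} (covering $0<p\leq 1$ via operator monotonicity), with the root-taking step converting the $\Phi$-moment equivalence into the $L_p$-norm equivalence. The direct argument you supply for $p=\infty$ is a routine addendum that the paper leaves implicit.
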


\subsection{Maximal inequalities for quasi-Banach spaces}

We need the following condition on the quasi-Banach symmetric operator space $E.$ 

\begin{definition} Quasi-Banach symmetric operator space $E$ is called $p$-fully symmetric, $p>0,$ if, for every $x\in E$ and for every $y$ such that $y^p\prec\prec x^p$ we have $y\in E$ and also $\|y\|_E\leq\|x\|_E.$
\end{definition}

In the case when $1\leq p<\infty$ the notion above is well known and plays an important role in the classical interpolation criteria. For more information the reader is referred to \cite[Theorem 4.7]{DDP-Int}.

\begin{theorem} Let $E$ be a $p$-fully symmetric quasi-Banach operator space, $0<p<\infty$. If $(x_k)_{k=1}^n\subset E(\mathcal{M},\tau)$ is a sequence of positive freely independent random variables, then
$$\inf\big\{\|a\|_E:\ a\geq x_k,\ 1\leq k\leq n\big\}\approx_{C_E} \|\mu(X)\chi_{(0,1)}\|_E,\quad X=\bigoplus_{k=1}^nx_k.$$
\end{theorem}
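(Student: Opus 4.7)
The plan is to reduce both inequalities to a pointwise submajorization of the form $\mu(\bigoplus_k A_{1,k})^p \prec\prec C_p\,\mu(a)^p$, after which the $p$-fully symmetric property of $E$ immediately converts this into the desired $E$-norm bound $\|\mu(X)\chi_{(0,1)}\|_E \leq C_p^{1/p}\|a\|_E$. As in the proofs of Theorems \ref{Mright} and \ref{main results 2}, I split each $x_k$ at the threshold $\mu(1,X)$ by setting $A_{1,k}=x_k e_{(\mu(1,X),\infty)}(x_k)$ and $A_{2,k}=x_k-A_{1,k}$; recall that $\mu(\bigoplus_k A_{1,k})=\mu(X)\chi_{(0,1)}$.

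For the upper bound, take the canonical witness $a_0:=\sum_{k=1}^n A_{1,k}+\mu(1,X)$, which is easily verified to dominate every $x_k$. By Lemma \ref{head lemma}, $\mu(\sum_k A_{1,k})\leq 10\,\sigma_{20}(\mu(X)\chi_{(0,1)})$; using the elementary submajorization $\sigma_{20} f\prec\prec 20 f$ for positive decreasing $f$, raising to the $p$-th power gives $\mu(\sum_k A_{1,k})^p\prec\prec C_p\,(\mu(X)\chi_{(0,1)})^p$, and the $p$-fully symmetric property then produces $\|\sum_k A_{1,k}\|_E\leq C_p\,\|\mu(X)\chi_{(0,1)}\|_E$. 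Combined with the trivial bound $\mu(1,X)\leq\|\mu(X)\chi_{(0,1)}\|_E$ and the quasi-triangle inequality in $E$, this completes the upper estimate.

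For the lower bound, fix an arbitrary $a\geq x_k$ and, for each $t\in(0,1]$, set $\lambda_t=\mu(t,X)$ and introduce the truncated family $A_{1,k}^{(t)}=x_k e_{(\lambda_t,\infty)}(x_k)$ with supports $p_k^{(t)}$; these are freely independent positive random variables with $\sum_k\tau(p_k^{(t)})\leq t$. Since $A_{1,k}^{(t)}\leq p_k^{(t)} a p_k^{(t)}$, either the trace inequality \cite[Proposition 4.6(ii)]{FK} (for $p\geq 1$) or the operator monotonicity of $s\mapsto s^p$ (for $0<p\leq 1$) yields $\tau((A_{1,k}^{(t)})^p)\leq \tau(a^p p_k^{(t)})$ for each $k$. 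Summing and invoking \cite[Corollary 33]{SZ} to obtain $\|\sum_k p_k^{(t)}\|_\infty\leq 64$, together with the Hardy--Littlewood inequality and the submajorization $\mu(\sum_k p_k^{(t)})\prec\prec 64\,\chi_{(0,t/64)}$, one arrives at
\[
\sum_k\tau\big((A_{1,k}^{(t)})^p\big)\leq\tau\Big(a^p\sum_k p_k^{(t)}\Big)\leq 64\int_0^t\mu(a)^p\,ds.
\]
Since the left-hand side equals $\int_0^{s_t}\mu(X)^p\,ds$ with $s_t=\sum_k\tau(p_k^{(t)})\leq t$, a routine approximation argument in $t$ (to absorb any plateau of $\mu(X)$ at level $\lambda_t$) upgrades this to $\int_0^t\mu(X)^p\,ds\leq 64\int_0^t\mu(a)^p\,ds$ for every $t\in(0,1]$; for $t\geq 1$ the bound reduces to Corollary \ref{cp}. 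Thus $\mu(X)^p\chi_{(0,1)}\prec\prec 64\,\mu(a)^p$, and the $p$-fully symmetric property of $E$ delivers $\|\mu(X)\chi_{(0,1)}\|_E\leq 64^{1/p}\|a\|_E$, after which the infimum over $a$ finishes the proof.

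The main subtlety lies in the case $p>1$ of the lower bound: the operator inequality $A\leq C\Rightarrow A^p\leq C^p$ fails in general, so the argument must pass through Fack--Kosaki's trace inequality rather than any operator inequality, and it is essential to exploit the free projection bound $\|\sum_k p_k^{(t)}\|_\infty\leq 64$ through Hardy--Littlewood in order to convert a sum of $n$ trace terms into the single Ky Fan functional $\int_0^t\mu(a)^p\,ds$.
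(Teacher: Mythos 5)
Your proof establishes the result, but the route differs from the paper's on both sides of the equivalence, and there are two places where the argument is stated imprecisely enough to need repair.

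For the upper bound you use Lemma \ref{head lemma} together with the elementary submajorization $\sigma_{20}g\prec\prec 20g$ and then invoke $p$-fully symmetry to get $\|\sum_kA_{1,k}\|_E\leq 10\cdot 20^{1/p}\|\mu(X)\chi_{(0,1)}\|_E$; this is correct and slightly more self-contained than the paper's appeal to \cite[Proposition 28]{SZ}, at the price of a constant depending on $p$. For the lower bound your strategy is genuinely different: instead of the paper's device of reducing to $0<p<1$, introducing the operator $Tx=\bigoplus_k p_kxp_k$, checking its $(L_1,L_\infty)$-boundedness and invoking \cite[Proposition 4.1]{DDP}, you work with the family of truncations $A^{(t)}_{1,k}$ at all levels $t\in(0,1]$ and estimate the Ky Fan functional directly through Hardy--Littlewood and the free projection bound $\|\sum_kp^{(t)}_k\|_\infty\leq 64$. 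This is a legitimate alternative, handles both regimes of $p$ without the reduction step, and isolates the submajorization $\mu(X)^p\chi_{(0,1)}\prec\prec 64\mu(a)^p$ cleanly.

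Two points need tightening. First, your treatment of the case $0<p\leq1$ as written is not correct: from $A^{(t)}_{1,k}\leq p^{(t)}_kap^{(t)}_k$, operator monotonicity of $s\mapsto s^p$ gives $(A^{(t)}_{1,k})^p\leq(p^{(t)}_kap^{(t)}_k)^p$, but Hansen's inequality for operator \emph{concave} functions gives $(p^{(t)}_kap^{(t)}_k)^p\geq p^{(t)}_ka^pp^{(t)}_k$, which is the wrong direction (and one can build $2\times2$ counterexamples to $\tau((pap)^p)\leq\tau(a^pp)$ with $p<1$). The correct route, implicitly the one behind \eqref{c} in the paper, starts from the plain inequality $A^{(t)}_{1,k}\leq a$: then $(A^{(t)}_{1,k})^p\leq a^p$, and since $(A^{(t)}_{1,k})^p$ is supported on $p^{(t)}_k$ one gets $\tau((A^{(t)}_{1,k})^p)=\tau\bigl(p^{(t)}_k(A^{(t)}_{1,k})^pp^{(t)}_k\bigr)\leq\tau(p^{(t)}_ka^pp^{(t)}_k)=\tau(a^pp^{(t)}_k)$. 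Second, the ``routine approximation argument in $t$'' to pass from $\int_0^{s_t}$ to $\int_0^t$ when $\mu(X)$ has a plateau is left entirely implicit; the cleanest fix, and the one the paper uses, is to reduce at the outset (by uniform approximation of the $x_k$) to the case $e_{\{t\}}(X)=0$ for all $t>0$, which forces $s_t=t$ and also ensures the identity $\mu(\bigoplus_kA_{1,k})=\mu(X)\chi_{(0,1)}$ used on the upper-bound side. You should make this normalization explicit rather than deferring it to a post hoc limiting argument.
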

\begin{proof} It is not hard to see that if $E$ is $p$-fully symmetric quasi-Banach operator space, then it is also $q$-fully symmetric quasi-Banach operator space for every $0<q<p$. Indeed, suppose that $z\in E$ and $g$ is such that $g^q\prec\prec z^q$. Then, well known results concerning submajorization yield that $(g^q)^{p/q}\prec\prec(z^q)^{p/q}$, which implies $g\in E$ and $\|g\|_E\leq \|z\|_E$. Therefore, if $E$ is $p$-fully symmetric quasi-Banach operator space for some $p\ge 1$, then $E$ is $p$-fully symmetric quasi-Banach operator space for every $0<p< 1$. Thus, it is sufficient to prove the assertion for $0<p<1.$ Let the random variables $A_{1,k},$ $1\leq k\leq n,$ be the same as in the proof of Theorem \ref{main results 2}, in particular, we have
$$
\mu(X)\chi_{(0,1)}=\mu((\bigoplus_{k=1}^nA_{1,k}).
$$ 
Setting
$$a:=\sum_{k=1}^nA_{1,k}+\mu(1,X).$$
we have $a\geq x_k,$ $1\leq k\leq n.$ By \cite[Proposition 28 ]{SZ}, there exists a constant $C_E$ such that
$$\|a\|_E\leq\mu(1,X)+\|\sum_{k=1}^nA_{1,k}\|_E\leq\mu(1,X)+C_E\|\bigoplus_{k=1}^nA_{1,k}\|_E=$$
$$=\mu(1,X)+C_E\|\mu(X)\chi_{(0,1)}\|_E\leq C_E\|\mu(X)\chi_{(0,1)}\|_E.$$

In order to prove the converse inequality, we denote $p_k={\rm supp}(A_{1,k}).$ These are freely independent random variables. By  in \cite[Corollary 33]{SZ} we have
$$\|\sum_{k=1}^np_k\|_{\infty}\leq 64\|\bigoplus_{k=1}^np_k\|_{L_1\cap L_{\infty}}\leq 64.$$
Since the function $t\rightarrow t^p$ is operator monotone, it follows that
\begin{equation}\label{c}
(\bigoplus_{k=1}^nA_{1,k})^p\leq\bigoplus_{k=1}^n p_ka^pp_k.
\end{equation}
Define an operator $T:(L_1+L_{\infty})(\mathcal{M},\tau)\to (L_1+L_{\infty})(\mathcal{M},\tau)$ by setting
$$Tx=\bigoplus_{k=1}^np_kxp_k.$$
For every positive $x\in L_1(\mathcal{M},\tau),$ we have
$$\|Tx\|_1=\|\bigoplus_{k=1}^np_kxp_k\|_1=\tau\Big(\bigoplus_{k=1}^np_kxp_k\Big)=\sum_{k=1}^n\tau(p_kxp_k)=\tau(x\sum_{k=1}^np_k)\leq$$
$$\leq\|x\|_1\|\sum_{k=1}^np_k\|_{\infty}\leq 64\|x\|_1.$$
Hence,
$$\|T\|_{L_1\to L_1}\leq 256.$$
For every $x\in L_{\infty}(\mathcal{M},\tau),$ we have
$$\|Tx\|_{\infty}=\|\bigoplus_{k=1}^np_kxp_k\|_{\infty}=\sup_{1\leq k\leq n}\|p_kxp_k\|_{\infty}\leq\|x\|_{\infty}.$$
Hence,
$$\|T\|_{L_{\infty}\to L_{\infty}}\leq 1.$$
Applying \cite[Proposition 4.1]{DDP}, we infer that $Tx\prec\prec 256x$ for every $x\in (L_1+L_{\infty})(\mathcal{M},\tau).$
Thus,
$$\mu^p(X)\chi_{(0,1)}=\mu((\bigoplus_{k=1}^nA_{1,k})^p)\leq\mu(T(a^p))\prec\prec 256a^p.$$
Since $E$ is $p$-fully symmetric, it follows that
$$\|\mu(X)\chi_{(0,1)}\|_E\leq 256^{1/p}\|a\|_E.$$
\end{proof}

\section{Johnson-Schechtman inequalities: symmetric quasi-Banach space case}\label{jsqb}

Our main motivation in this section is to extend the result of \cite[Theorem 37]{SZ} from symmetric to quasi-symmetric Banach spaces. The technique developed in \cite{SZ} to handle the sums of free independent random variables in Banach space setting  (see key intermediate results Theorem 14 and Lemma 36 in \cite{SZ}) fail to extend to the quasi-normed setting. It should be also emphasized that techniques developed in \cite{pacific} to handle the same problem for the classical (commutative) setting is also inapplicable in the free independent setting. Essentially distinct approach is required to estimate the sums of free random variables in the quasi-Banach spaces. In the present section, we develop such an approach.

The following lemma is well known and can be found e.g. in Section 4.C.1 in \cite{MOA}.

\begin{lemma}\label{finite dim majorization lemma} Let $0\leq x,y\in\mathbb{R}^n$ and $y\prec x,$ then
$$y=\sum_{\pi\in\mathfrak{S}_n}a(\pi)(x\circ \pi),$$
where $\mathfrak{S}_n$ is the set of all permutations of $\{1,\cdots,n\}$ and $a$ is a map from $\mathfrak{S}_n$ to $[0,1]$ with $\sum_{\pi\in\mathfrak{S}_n}a(\pi)=1.$
\end{lemma}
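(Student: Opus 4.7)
The plan is to combine two classical results: the Hardy--Littlewood--Pólya theorem, which identifies majorization $y\prec x$ with the existence of a doubly stochastic representation $y=Dx$, and the Birkhoff--von Neumann theorem, which describes the polytope of doubly stochastic matrices as the convex hull of the $n!$ permutation matrices.

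First, I would establish that $y\prec x$ implies the existence of an $n\times n$ doubly stochastic matrix $D$ (nonnegative entries, all row and column sums equal to $1$) such that $y=Dx$. After reordering both vectors so that the components of $x$ and $y$ are arranged in non-increasing order, one argues by induction on $n$ using T-transforms, i.e., doubly stochastic matrices of the form $\lambda I+(1-\lambda)P_{(ij)}$ for some transposition $(ij)$ and some $\lambda\in[0,1]$. At each step one selects indices and a parameter $\lambda$ so that the corresponding T-transform applied to $x$ matches one additional entry of $y$; the majorization hypothesis is precisely what guarantees that such a $\lambda\in[0,1]$ always exists. After at most $n-1$ reductions one obtains the desired $D$ as a finite product of T-transforms, hence doubly stochastic.

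Next, by the Birkhoff--von Neumann theorem, $D$ lies in the convex hull of the permutation matrices $P_\pi$, $\pi\in\mathfrak{S}_n$, so we may write $D=\sum_{\pi\in\mathfrak{S}_n}a(\pi)P_\pi$ with $a(\pi)\in[0,1]$ and $\sum_\pi a(\pi)=1$. Multiplying through by $x$ yields
$$y=Dx=\sum_{\pi\in\mathfrak{S}_n}a(\pi)(x\circ\pi),$$
which is the desired representation.

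The main ingredients are entirely classical and the statement itself is recorded in Section 4.C.1 of \cite{MOA}, so there is no genuine obstacle; the work is essentially careful bookkeeping. The one subtlety worth noting concerns the convention for how a permutation acts on a coordinate vector, namely whether $x\circ\pi$ denotes $(x_{\pi(1)},\dots,x_{\pi(n)})$ or $(x_{\pi^{-1}(1)},\dots,x_{\pi^{-1}(n)})$. Since both conventions exhaust the same $\mathfrak{S}_n$-orbit of $x$, only the coefficients $a(\pi)$ need to be relabeled via $\pi\mapsto\pi^{-1}$, and the statement of the lemma is independent of this choice.
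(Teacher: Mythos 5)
Your argument is correct and is precisely the classical Hardy--Littlewood--P\'olya plus Birkhoff--von Neumann route, which is exactly the content of Section 4.C.1 of \cite{MOA} that the paper cites in lieu of a proof. There is no divergence between your approach and the paper's.
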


\begin{lemma}\label{sssb} Let $0\leq x_k,y_k\in L_1(\mathcal{M},\tau),$ $1\leq k\leq n,$ be freely independent random variables. If $y_k\prec x_k$ for $1\leq k\leq n,$ then
$$\sum_{k=1}^ny_k\prec\sum_{k=1}^nx_k.$$
\end{lemma}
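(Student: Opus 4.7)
The plan is to realize each $y_k$ as the image of an equimeasurable copy of $x_k$ under a trace-preserving conditional expectation, to assemble these realizations inside a free product so as to preserve freeness on both sides, and then to conclude via the classical fact that trace-preserving conditional expectations are submajorization-decreasing on positive elements. Since by Lemma \ref{FS} the distribution of $\sum_k y_k$ (respectively $\sum_k x_k$) depends only on the individual distributions of the $y_k$'s (respectively $x_k$'s), we are free to replace $\mathcal{M}$ by any convenient noncommutative probability space as long as the freeness structure is preserved.

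First I would reduce to simple operators by approximating each $\mu(x_k)$ and $\mu(y_k)$ by step functions on a common partition of $[0,1]$ into $N$ equal subintervals and lifting back via Borel functional calculus; this preserves positivity, free independence inside each family, and the relation $y_k\prec x_k$ up to an error that vanishes as $N\to\infty$. In the simple case, Lemma \ref{finite dim majorization lemma} (equivalently, the Schur--Horn theorem) produces, for each $k$, a finite von Neumann algebra $\mathcal{N}_k\supset\mathcal{A}_k:=W^{*}(y_k)$, a trace-preserving conditional expectation $\mathbb{E}_k\colon\mathcal{N}_k\to\mathcal{A}_k$, and a positive element $z_k\in\mathcal{N}_k$ equimeasurable with $x_k$ satisfying $\mathbb{E}_k(z_k)=y_k$. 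Concretely, encoding $\mu(x_k)$ and $\mu(y_k)$ as vectors in $\mathbb{R}^N$ with $\mu(y_k)\prec\mu(x_k)$, Birkhoff--von Neumann writes $\mu(y_k)$ as a convex combination of permutations of $\mu(x_k)$, and Schur--Horn realizes this by a unitary $U_k\in M_N$ for which $y_k$ is the diagonal compression of $U_k\,\mathrm{diag}(\mu(x_k))\,U_k^{*}$.

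Next I would form the free product $\mathcal{N}:=\star_{k=1}^{n}\mathcal{N}_k$ equipped with its canonical tracial state. It contains $\mathcal{A}:=\star_{k=1}^{n}\mathcal{A}_k$ and admits a trace-preserving conditional expectation $\mathbb{E}\colon\mathcal{N}\to\mathcal{A}$ that restricts to $\mathbb{E}_k$ on each $\mathcal{N}_k$ (the standard construction of a conditional expectation for a free product with amalgamation). By construction $\{z_k\}_{k=1}^{n}$ is freely independent in $\mathcal{N}$ with each $z_k$ equimeasurable with $x_k$, so by Lemma \ref{FS} one has $\mu(\sum_k z_k)=\mu(\sum_k x_k)$. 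Using the classical fact that a trace-preserving conditional expectation is submajorization-decreasing on positive elements, I then deduce
\begin{equation*}
\mu\big(\sum_k y_k\big)=\mu\big(\mathbb{E}(\sum_k z_k)\big)\prec\mu\big(\sum_k z_k\big)=\mu\big(\sum_k x_k\big),
\end{equation*}
and the equality $\|\sum_k y_k\|_1=\|\sum_k x_k\|_1$ is immediate from additivity of $\tau$ together with $\|y_k\|_1=\|x_k\|_1$, yielding the full majorization $\sum_k y_k\prec\sum_k x_k$.

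The hard part will be the noncommutative Ryff step: producing the equimeasurable lift $z_k$ together with the trace-preserving conditional expectation $\mathbb{E}_k$ satisfying $\mathbb{E}_k(z_k)=y_k$ for general positive operators $y_k\prec x_k$. In the finite-spectrum case this is Schur--Horn, but one must verify that the $\mathcal{N}_k$ glue into a free product carrying a single conditional expectation onto $\mathcal{A}$, and that the approximation from simple to general positive operators preserves free independence, equimeasurability, and the majorization $y_k\prec x_k$ in the limit.
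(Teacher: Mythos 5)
Your proposal is correct, but it follows a genuinely different route from the paper. Both proofs begin the same way: invoke Lemma~\ref{FS} to transport everything into a free product $\star_k \mathcal{M}_k$ with each $x_k,y_k$ realized inside a common abelian copy of $L_\infty(0,1)$ in $\mathcal{M}_k$, and discretize $\mu(x_k),\mu(y_k)$ to step functions on an $N$-grid (the paper uses conditional averaging over the $N$ intervals, which is cleaner than a generic step-function approximation because it preserves the relation $\prec$ exactly; you should do the same rather than track a vanishing error). The divergence is in how Lemma~\ref{finite dim majorization lemma} is used. The paper takes the Birkhoff--von Neumann route directly: it writes each discretized $y_{k,N}$ as a convex combination $\sum_\pi a_k(\pi)\,(u_{k,N}\circ\pi)$ of permutations of the discretized $x_{k,N}$, expands $\sum_k y_{k,N}$ as a single convex combination $\sum_\pi a(\pi)\sum_k X_{k,N,\pi}$ where each family $\{X_{k,N,\pi}\}_k$ is freely independent and equimeasurable with $\{x_{k,N}\}_k$, and then applies the subadditivity of singular-value functions for convex combinations together with Lemma~\ref{FS}. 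You instead take the Schur--Horn route: you enlarge $\mathcal{M}_k$ to $\mathcal{N}_k$, produce an equimeasurable lift $z_k$ of $x_k$ with a trace-preserving conditional expectation $\mathbb{E}_k$ sending $z_k$ to $y_k$, glue via $\mathcal{N}=\star_k\mathcal{N}_k$, and apply the fact that trace-preserving conditional expectations decrease Hardy--Littlewood submajorization. Both packagings of the same combinatorial fact work, and both end by letting $N\to\infty$. What your version buys is a cleaner conceptual statement (``$\sum y_k$ is the image of a freely equimeasurable copy of $\sum x_k$ under one trace-preserving conditional expectation''), at the cost of two extra lemmas that the paper avoids: that a tracial conditional expectation on a free product $\star_k\mathcal{N}_k$ onto $\star_k\mathcal{A}_k$ restricts to each $\mathbb{E}_k$, and that conditional expectations are $\prec\prec$-decreasing. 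Both are true and standard, but the first requires a genuine (if short) argument with alternating centered words; also, the terminology ``free product with amalgamation'' is not quite right here --- there is no amalgamation over a common subalgebra, just the scalar free product, and the existence of the trace-preserving expectation is automatic in any finite tracial setting; the nontrivial point is only that it restricts correctly. Finally, a small slip: if you want $\mathbb{E}_k(z_k)=y_k$ from Schur--Horn directly, take $\mathcal{A}_k$ to be the full diagonal MASA of $M_N$ rather than $W^*(y_k)$ (they differ when $\mu(y_k)$ has repeated values); this changes nothing downstream since free independence of $\{y_k\}$ only uses that each $y_k$ lies in a separate free factor.
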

\begin{proof} Using Lemma \ref{FS}, we can assume without loss of generality that $\mathcal{M}=\star_{k=1}^n\mathcal{M}_k$ and that there exists a trace preserving $*$-isomorphism $i_k:L_{\infty}(0,1)\to\mathcal{M}_k,$ such that
$$y_k=i_k\big(\mu(y_k)\big)\quad {\rm and}\quad x_k=i_k\big(\mu(x_k)\big),\quad 1\leq k\leq n.$$
Fix $N\in\mathbb{N}.$ For every $1\leq k\leq n,$ define the functions $u_{k,N}$ and $v_{k,N}$ by setting
$$u_{k,N}(t)=N\int_{\frac{i-1}{N}}^{\frac{i}{N}}\mu(s,x_k)ds,\quad v_{k,N}(t)=N\int_{\frac{i-1}{N}}^{\frac{i}{N}}\mu(s,y_k)ds,\quad\frac{i-1}{N}<t<\frac{i}{N},$$
for every $1\leq i\leq N.$ Define elements $x_{k,N},y_{k,N}\in\mathcal{M}_k,$ $1\leq k\leq N,$ by setting
$$x_{k,N}=i_k(u_{k,N}),\quad y_{k,N}=i_k(v_{k,N}).$$
It is easy to check that $v_{k,N}\prec u_{k,N},$ $1\leq k\leq n.$

It follows from Lemma \ref{finite dim majorization lemma} that there exists a mapping $a_k:\mathfrak{S}_N\to[0,1]$ such that 
$$v_{k,N}=\sum_{\pi_k\in\mathfrak{S}_N}a_k(\pi_k)(u_{k,N}\circ\pi_k),\quad \sum_{\pi_k\in\mathfrak{S}_N}a_k(\pi_k)=1.$$
Let us introduce notations
$$\pi=(\pi_1,\cdots,\pi_n)\in \mathfrak{S}_N^{\times n}=\underbrace{\mathfrak{S}_N\times\cdots\times\mathfrak{S}_N}_{\mbox{$n$ times}},$$
$$a(\pi)=a_1(\pi_1)\cdots a_n(\pi_n),\quad X_{k,N,\pi}=i_k(u_{k,N}\circ\pi_k).$$
In these notations, we have $\sum_{\pi\in\mathfrak{S}_N^{\times n}}a(\pi)=1$ and we can write
$$v_{k,N}=\sum_{\pi\in\mathfrak{S}_N^{\times n}}a(\pi)(u_{k,N}\circ\pi_k),\quad y_{k,N}=\sum_{\pi\in\mathfrak{S}_N^{\times n}}a(\pi)X_{k,N,\pi},\quad 1\leq k\leq n.$$
Therefore, 
$$\sum_{k=1}^ny_{k,N}=\sum_{k=1}^n\sum_{\pi\in\mathfrak{S}_N^{\times n}}a(\pi)X_{k,N,\pi}=\sum_{\pi\in\mathfrak{S}_N^{\times n}}a(\pi)\Big(\sum_{k=1}^nX_{k,N,\pi}\Big).$$
It follows from Theorem 3.3.3 in \cite{LSZ} that
$$\sum_{k=1}^ny_{k,N}\prec\sum_{\pi\in \mathfrak{S}_N^{\times n}}a(\pi)\mu\Big(\sum_{k=1}^nX_{k,N,\pi}\Big).$$
For a fixed $N,\pi,$ the random variables $X_{k,N,\pi},$ $1\leq k\leq n,$ are freely independent (observe that $X_{k,N,\pi}\in \M_k$). Since we also have $\mu(X_{k,N,\pi})=\mu(x_{k,N}),$ it follows from Lemma \ref{FS} that
$$\mu\Big(\sum_{k=1}^nX_{k,N,\pi}\Big)=\mu\Big(\sum_{k=1}^nx_{k,N}\Big).$$
Hence, we obtain
$$\sum_{k=1}^ny_{k,N}\prec\sum_{k=1}^nx_{k,N}.$$
As $N\to\infty,$ we have that $y_{k,N}\to y_k$ in $L_1(\mathcal{M},\tau).$ Therefore,
$$\sum_{k=1}^ny_{k,N}\to\sum_{k=1}^ny_k,\quad \sum_{k=1}^nx_{k,N}\to\sum_{k=1}^nx_k$$
in $L_1(\mathcal{M},\tau)$ as $N\to\infty.$ This concludes the proof.
\end{proof}

The following formula for the $p-$norm of a free Poisson random variable is certainly known. We provide a short proof for completeness.

\begin{lemma}\label{P} Let $\xi_u$ be a free Poisson random variable with parameter $u>1.$ We have
$$\|\xi_u\|_p\approx_{c_p} u\quad {\rm for }\quad  0<p<\infty.$$
\end{lemma}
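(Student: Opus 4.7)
The strategy is to extract the first and second moments of $\xi_u$ from its free cumulant structure, use these together with the support of $m_u$ to bound $\|\xi_u\|_1$, $\|\xi_u\|_2$ and $\|\xi_u\|_\infty$, and then interpolate to obtain all $L_p$-norms. All computations take place on the tracial probability space $(\mathcal M,\tau)$, so $p\mapsto \|\cdot\|_p$ is nondecreasing.

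First I would record three facts. Since $\kappa_m(\xi_u)=u$ for every $m\ge1$ (this was established in the proof of Lemma~\ref{cumpois}), the moment--cumulant formula immediately gives
$$\tau(\xi_u)=u,\qquad \tau(\xi_u^2)=\kappa_2(\xi_u)+\kappa_1(\xi_u)^2=u+u^2.$$
From the explicit density \eqref{frpodef} the spectrum of $\xi_u$ is contained in $[(u^{1/2}-1)^2,(u^{1/2}+1)^2]$, so $\|\xi_u\|_\infty=(u^{1/2}+1)^2$. An elementary check shows $(u^{1/2}+1)^2\le 4u$ for every $u\ge 1$.

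The upper bound is then immediate: for all $0<p\le\infty$,
$$\|\xi_u\|_p\le\|\xi_u\|_\infty=(u^{1/2}+1)^2\le 4u.$$
For the lower bound when $p\ge 1$, monotonicity of $L_p$-norms together with the computation above yields
$$\|\xi_u\|_p\ge\|\xi_u\|_1=\tau(\xi_u)=u.$$

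The only point requiring a slightly finer argument is the lower bound for $0<p<1$, where the direct Jensen estimate is unavailable. Here I would invoke the standard log-convexity of $L_p$-norms applied with exponents $p<1<2$: choosing $\theta=p/(2-p)\in(0,1)$ so that $1=\theta/p+(1-\theta)/2$, Hölder's inequality gives
$$u=\|\xi_u\|_1\le\|\xi_u\|_p^{\,p/(2-p)}\,\|\xi_u\|_2^{\,(2-2p)/(2-p)}.$$
Since $\|\xi_u\|_2=(u+u^2)^{1/2}\le \sqrt2\,u$ for $u\ge 1$, substituting and solving for $\|\xi_u\|_p$ yields
$$\|\xi_u\|_p\ge 2^{-(1-p)/p}\,u,$$
a constant that depends only on $p$. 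Combining this with the upper bound $\|\xi_u\|_p\le 4u$ completes the proof.

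No step should present serious difficulty; the mild subtlety is the case $0<p<1$, but once the second moment is computed from the cumulant identities, the log-convexity step is routine.
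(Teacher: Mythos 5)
Your proof is correct, and it takes a genuinely different route from the paper's. The paper argues by direct computation: it observes that the support of $m_u$ is contained in $[\tfrac14 u,4u]$ (after reducing WLOG to $u>4$), replaces $t^{p-1}$ by $u^{p-1}$ in the integral $\int t^p\,dm_u(t)$ up to a constant $4^{|p-1|}$, and then evaluates the remaining semicircular integral exactly, obtaining $\|\xi_u\|_p^p\approx u^p$ in one stroke for all $0<p<\infty$. Your argument instead extracts $\tau(\xi_u)=u$ and $\tau(\xi_u^2)=u+u^2$ from the cumulant identities, reads off $\|\xi_u\|_\infty=(u^{1/2}+1)^2\le 4u$ from the support, and then uses monotonicity of $L_p$-norms on a probability space for $p\ge 1$ and the Lyapunov log-convexity inequality $\|\cdot\|_1\le\|\cdot\|_p^{\theta}\|\cdot\|_2^{1-\theta}$ for $0<p<1$. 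Both routes are sound; yours avoids the direct manipulation of the Marchenko--Pastur integral and the reduction to $u>4$ (which the paper handles implicitly and a little hastily for $1<u\le 4$), at the modest cost of the extra interpolation step in the regime $p<1$. One small stylistic point: you could shorten the $p<1$ case by interpolating $1$ between $p$ and $\infty$ rather than between $p$ and $2$, since you already have the sharp $L_\infty$-bound in hand; this removes the need to compute $\tau(\xi_u^2)$ at all.
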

\begin{proof} Without loss of generality, $u>4.$ Hence,
$$\frac14u\leq (1-\sqrt{u})^2\leq (1+\sqrt{u})^2\leq 4u.$$
It follows from \eqref{frpodef} that
$$\|\xi_u\|_p^p=\int_{\mathbb{R}}t^pdm_u(t)=\int_{(1-\sqrt{u})^2}^{(1+\sqrt{u})^2}\frac{t^p}{2\pi t}\sqrt{4u-(t-1-u)^2}\,dt\approx$$
$$\approx_{4^{p-1}}u^{p-1}\int_{(1-\sqrt{u})^2}^{(1+\sqrt{u})^2}\sqrt{4u-(t-1-u)^2}\,dt\stackrel{t-1-u=v}{=}u^{p-1}\int_{-2u^{1/2}}^{2u^{1/2}}\sqrt{4u-v^2}\,dv.$$
Since the right hand side equals $2\pi u^p,$ the assertion follows.
\end{proof}

\begin{lemma}\label{tail lemma} For every $p>0,$ if $(x_k)_{k=1}^n\subset L_p(\mathcal{M},\tau)$ is a sequence of positive freely independent random variables, then
$$\|\mu(X)\chi_{(1,\infty)}\|_1\leq c_p\|\sum_{k=1}^nx_k\|_p,\quad X=\bigoplus_{k=1}^n\mu(x_k).$$
Here, $c_p$ is the constant which depends only on $p.$
\end{lemma}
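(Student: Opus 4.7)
My plan is to split the argument according to whether $p\geq 1$ or $0<p<1$.

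For $p\geq 1$, I would use a direct Jensen argument: on the noncommutative probability space $(\mathcal{M},\tau)$, $\|a\|_1\leq \|a\|_p$ for every $a\in L_p(\mathcal{M},\tau)$, and positivity gives $\|\sum_k x_k\|_1 = \tau(\sum_k x_k) = \|X\|_1$. Combined with the trivial inequality $\|\mu(X)\chi_{(1,\infty)}\|_1 \leq \|X\|_1$, this yields the desired bound with constant $c_p=1$.

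For $0<p<1$ (the nontrivial regime), the plan is a head/tail truncation followed by a sub-case analysis. After approximating so that $X$ has continuous spectrum, set $T:=\mu(1,X)$ and write $x_k = A_{1,k}+A_{2,k}$ with $A_{2,k}:=x_k e_{[0,T]}(x_k)$, exactly as in the proof of Theorem \ref{main results 2}. A direct computation of the distribution function of $\bigoplus_k A_{2,k}$ then gives $\|\bigoplus_k A_{2,k}\|_\infty=T$ and $\|\bigoplus_k A_{2,k}\|_1=\|\mu(X)\chi_{(1,\infty)}\|_1=:L$. If $L\leq T$, one has $T\leq \|\mu(X)\chi_{(0,1)}\|_p$ (since $\mu(\cdot,X)\geq T$ on $(0,1)$), and Corollary \ref{cp} applied with the witness $a=\sum_k x_k$ yields $\|\mu(X)\chi_{(0,1)}\|_p\leq c_p \|\sum_k x_k\|_p$, whence $L\leq T\leq c_p\|\sum_k x_k\|_p$. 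If instead $L>T$, the positive free family $\{A_{2,k}\}$ satisfies \cite[Corollary 33]{SZ}, yielding $\|\sum_k A_{2,k}\|_\infty\leq 64\|\bigoplus_k A_{2,k}\|_{L_1\cap L_\infty}=64L$; combined with $\|\sum_k A_{2,k}\|_1=L$ and the reverse H\"older inequality $\tau(b)\leq\tau(b^p)\|b\|_\infty^{1-p}$ valid for $b\geq 0$ and $0<p<1$, this produces $\|\sum_k A_{2,k}\|_p\geq c_p L$. Finally, Lemma \ref{sssa} applied to the pair $\{A_{2,k}\},\{x_k\}$ (noting $\mu(A_{2,k})\leq\mu(x_k)$) gives $\|\sum_k A_{2,k}\|_p\leq \|\sum_k x_k\|_p$, completing the argument.

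The principal obstacle is the regime $0<p<1$: since $\|\cdot\|_1$ dominates $\|\cdot\|_p$ on a probability space there, the Jensen route of the first case breaks down. The key insight I would exploit is that the tail quantity $L$ can always be controlled through one of two independent estimates already at hand: either by the head-size $T$, itself bounded via Corollary \ref{cp}, or---when $L$ exceeds $T$---by the $L_\infty$-norm of $\sum_k A_{2,k}$, bounded in terms of $L$ via \cite[Corollary 33]{SZ} and hence amenable to reverse H\"older interpolation.
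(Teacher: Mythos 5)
Your proof is correct, but the route through the nontrivial range $0<p<1$ is genuinely different from the paper's. The paper truncates each $x_k$ at the level $T=\mu(1,X)$, submajorizes the truncations by scaled freely independent indicator-valued variables $z_k$, compares $z_k$ with free Poisson variables $\xi_k$ via Lemma~\ref{KB}, invokes the fact that a free sum of free Poisson variables is again free Poisson (Lemma~\ref{frsum}), and finally reads off the lower bound from the explicit computation $\|\xi_u\|_p\approx_{c_p}u$ in Lemma~\ref{P}. Your argument bypasses the entire free-Poisson machinery. Instead you use the same head/tail decomposition $x_k=A_{1,k}+A_{2,k}$ as in the proofs of Theorems~\ref{main results 2} and~\ref{Mright}, and run a dichotomy in $L:=\|\mu(X)\chi_{(1,\infty)}\|_1$ versus $T$: when $L\le T$ you control $L$ by the head via Corollary~\ref{cp} (equivalently Proposition~\ref{KPM} with the operator monotone $\Phi(t)=t^p$), and when $L>T$ you combine the $L_\infty$-bound $\|\sum_kA_{2,k}\|_\infty\le 64\|\bigoplus_kA_{2,k}\|_{L_1\cap L_\infty}$ from \cite[Corollary~33]{SZ} with the reverse H\"older inequality $\tau(b)\le\tau(b^p)\|b\|_\infty^{1-p}$ and the trivial operator inequality $0\le\sum_kA_{2,k}\le\sum_kx_k$ (so Lemma~\ref{sssa} is not strictly necessary there). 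I checked the key steps: $\|\mu(X)\chi_{(0,1)}\|_p\ge T$ since $\mu(\cdot,X)\ge T$ on $(0,1)$; Corollary~\ref{cp} does cover $0<p<1$ via Proposition~\ref{KPM}; and no circularity arises, since Corollary~\ref{cp} depends only on Lemma~\ref{head lemma} and \cite[Corollary~33]{SZ}, neither of which uses the tail lemma. For $p\ge 1$ your Jensen argument ($\|\mu(X)\chi_{(1,\infty)}\|_1\le\|X\|_1=\|\sum_kx_k\|_1\le\|\sum_kx_k\|_p$ on a probability space) is also more elementary than the paper's citation of \cite{SZ}. What the paper's route buys is self-containedness at the level of explicit free-probabilistic computations (and it produces a quantitatively explicit lower bound via the free Poisson norm); what your route buys is a shorter argument reusing maximal-inequality infrastructure already established in Section~5 and avoiding the $R$-transform/free-cumulant material of Lemmas~\ref{cumpois}--\ref{frsum}.
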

\begin{proof} We assume that $0<p<1$ (for $p\geq 1,$ the assertion is proved in \cite{SZ}). Without loss of generality, $\mu(1,X)>0.$ The random variables $y_k=\min\{x_k,\mu(1,X)\},$ $1\leq k\leq n,$ are also freely independent. Consider positive freely independent random variables $z_k,$ $1\leq k\leq n,$ such that
$$\mu(z_k)=\chi_{(0,\beta_k)},\quad {\rm and}\quad \beta_k=\frac{\|y_k\|_1}{\mu(1,X)},\quad 1\leq k\leq n.$$
Let $\xi_k,$ $1\leq k\leq n,$ be freely independent free Poisson random variables with parameters $\beta_k.$

Since $\|y_k\|_{\infty}\leq\mu(1,X)$ and since $\|y_k\|_1=\|\mu(1,X)z_k\|_1,$ it follows that $y_k\prec \mu(1,X)z_k,$ $1\leq k\leq n.$ It follows from Lemma \ref{sssb} and Lemma 25 in \cite{pacific} that
$$\sum_{k=1}^ny_k\prec\mu(1,X)\sum_{k=1}^n z_k\mbox{ and, therefore, }\|\sum_{k=1}^n y_k\|_p\geq\mu(1,X)\|\sum_{k=1}^n z_k\|_p.$$
It follows from Lemma \ref{KB} that $\mu(z_k)\leq \frac14\mu(\xi_k),$ $1\leq k\leq n.$ Hence,
$$\|\sum_{k=1}^n x_k\|_p\geq\|\sum_{k=1}^n y_k\|_p\geq\mu(1,X)\|\sum_{k=1}^n z_k\|_p\stackrel{L.\ref{sssa}}{\geq}\frac14\mu(1,X)\|\sum_{k=1}^n\xi_k\|_p.$$

Since the random variables $\xi_k,$ $1\leq k\leq n,$ are freely independent, it follows from Lemma \ref{frsum} that $\sum_{k=1}^n\xi_k$ is a free Poisson random variable with a parameter $u=\sum_{k=1}^n\beta_k.$ By construction, $u>1.$ Thus,
$$\|\sum_{k=1}^n x_k\|_p\geq \frac14\mu(1,X)\|\xi_u\|_p\stackrel{L.\ref{P}}{\geq}c_p\mu(1,X)\sum_{k=1}^n\beta_k\geq c_p\|\mu(X)\chi_{(1,\infty)}\|_1.$$
\end{proof}

The following theorem (a Johnson-Schechtman inequality for quasi-Banach symmetric operator spaces)  is our main result in this section. It extends \cite[Theorem 37]{SZ}.

\begin{theorem}
Let $E$ be an arbitrary symmetric quasi-Banach space and let $(x_k)_{k=1}^n\subset E(\mathcal{M},\tau)$ be a sequence of positive freely independent random variables. We have
$$\|\bigoplus_{k=1}^n\mu(x_k)\|_{Z_E^1}\leq C_E \big\|\sum_{k=1}^nx_k\big\|_E.$$
\end{theorem}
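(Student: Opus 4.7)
The plan is to prove the head and tail pieces of the $Z_E^1$-quasi-norm separately. Split $x_k = A_{1,k} + A_{2,k}$ with $A_{1,k} = x_k e_{(\mu(1,X),\infty)}(x_k)$, as in the proof of Theorem \ref{main results 2}. Both $\{A_{1,k}\}_{k=1}^n$ and $\{A_{2,k}\}_{k=1}^n$ are sequences of freely independent positive random variables; after negligible atoms are handled as in the earlier proofs, $\mu(\bigoplus_{k=1}^n A_{1,k}) = \mu(X)\chi_{(0,1)}$ and $\sum_k \tau(\mathrm{supp}(A_{1,k})) \le 1$ by construction.

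For the head estimate $\|\mu(X)\chi_{(0,1)}\|_E \le C_E\|\sum x_k\|_E$, I would apply the right-hand inequality of Lemma \ref{head lemma} to $\{A_{1,k}\}$, yielding the pointwise bound $\mu(\bigoplus A_{1,k}) \le 10\,\sigma_{20}\mu(\sum A_{1,k})$. Taking $E$-quasi-norms and using the known fact that the dilation operator acts boundedly on any symmetric quasi-Banach space (with constant depending only on $E$ and its quasi-triangle constant), one gets $\|\mu(X)\chi_{(0,1)}\|_E \le C_E \|\sum A_{1,k}\|_E$. Since $A_{1,k} \le x_k$ and the sequences are freely independent, Lemma \ref{sssa} gives $\mu(\sum A_{1,k}) \le \mu(\sum x_k)$, so $\|\sum A_{1,k}\|_E \le \|\sum x_k\|_E$.

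For the tail estimate $\|\mu(X)\chi_{(1,\infty)}\|_1 \le C_E\|\sum x_k\|_E$, I would adapt the proof of Lemma \ref{tail lemma} directly to the $E$-quasi-norm. Set $y_k = \min\{x_k,\mu(1,X)\}$ (still freely independent), let $z_k$ be freely independent with $\mu(z_k) = \chi_{(0,\beta_k)}$ where $\beta_k = \|y_k\|_1/\mu(1,X)$, and let $\xi_k = K_{\rm free}(z_k)$ be the associated freely independent free Poissons. By Lemma \ref{frsum}, $\sum_k \xi_k$ is a single free Poisson $\xi_u$ with parameter $u = \sum_k \beta_k$. Combining Lemmas \ref{sssb}, \ref{KB}, and \ref{sssa}, one aims for a chain of estimates $\|\sum x_k\|_E \ge \|\sum y_k\|_E \ge c\,\mu(1,X)\|\sum z_k\|_E \ge c'\,\mu(1,X)\|\xi_u\|_E$. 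A direct analysis of the free Poisson density (along the lines of Lemma \ref{P}) gives $\mu(\xi_u) \ge (u/2)\chi_{(0,1)}$ for $u$ large and a trivial bound for bounded $u$, so $\|\xi_u\|_E \ge c_E u$. Substituting the identity $\mu(1,X)\cdot u = \mu(1,X) + \|\mu(X)\chi_{(1,\infty)}\|_1$ then finishes the tail bound.

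The main obstacle is the middle inequality $\|\sum y_k\|_E \ge c\,\mu(1,X)\|\sum z_k\|_E$ in the chain above. In Lemma \ref{tail lemma}, the analogous $L_p$-inequality (for $0<p<1$) is extracted from the majorization $\sum y_k \prec \mu(1,X)\sum z_k$ via Schur concavity of $\|\cdot\|_p$, a feature special to $p<1$ that fails for arbitrary quasi-Banach symmetric $E$. The heart of the proof must therefore replace this Schur-concavity step. Two natural avenues are (a) to re-examine Lemma \ref{sssb} so as to extract a free-independence-specific comparison that survives in an arbitrary symmetric quasi-Banach quasi-norm, or (b) to combine Lemma \ref{tail lemma} for some fixed $p$ with an embedding $\|\cdot\|_p \le C_E\|\cdot\|_E$ valid on free sums of positive random variables even when no such embedding holds for general functions.
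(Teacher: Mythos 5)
Your decomposition $x_k = A_{1,k}+A_{2,k}$ and your treatment of the head term $\|\mu(X)\chi_{(0,1)}\|_E$ match the paper's proof exactly (Lemma \ref{head lemma} plus boundedness of the dilation and $A_{1,k}\le x_k$). For the tail term you also correctly identify the essential obstruction: the step $\big\|\sum_{k}y_k\big\|_p\geq\mu(1,X)\big\|\sum_k z_k\big\|_p$ in Lemma \ref{tail lemma} exploits Schur-concavity of $t\mapsto t^p$ for $0<p<1$ relative to the majorization $\prec$, and this has no analogue for a general symmetric quasi-norm, so the $L_p$-argument cannot simply be re-run in $E$. You single out your option (b) as the right route, but you hedge unnecessarily by proposing to seek an embedding $\|\cdot\|_p\le C_E\|\cdot\|_E$ only ``on free sums.'' In fact, this embedding holds for \emph{all} elements: every symmetric quasi-Banach function space $E$ on $(0,1)$ embeds continuously into $L_p$ for some $0<p<1$, with constant depending only on $E$. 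This is precisely \cite[Lemma 24]{pacific} (and ultimately a consequence of the Aoki--Rolewicz $p$-renorming together with symmetry). With this fact in hand there is nothing left to prove for the tail: fix such a $p$, apply Lemma \ref{tail lemma} to obtain $\|\mu(X)\chi_{(1,\infty)}\|_1\le c_p\|\sum_{k=1}^n x_k\|_p$, and then use $\|\sum_{k=1}^n x_k\|_p\le C_E\|\sum_{k=1}^n x_k\|_E$. Your option (a) of re-examining Lemma \ref{sssb} does not go anywhere useful, because any route through that lemma still requires converting a majorization into a quasi-norm inequality, which is exactly the Schur-concavity step you correctly flagged as unavailable. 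So the only thing separating your proposal from the paper's proof is recognising that $E\subset L_p$ is a general fact, not something that needs to be established for free sums.
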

\begin{proof} Let
$$X=\bigoplus_{k=1}^n\mu(x_k).$$
Without loss of generality, $e_{\{t\}}(X)=0$ for every $t>0.$ Define random variables
$$A_{1k}=x_k e_{(\mu(1,X),\infty)}(x_k),\quad 1\leq k\leq n.$$ 
It follows from \eqref{head lemma} that
$$\mu(X)\chi_{[0,1]}=\mu\Big(\bigoplus_{k=1}^nA_{1k}\Big)\leq 10\sigma_{20}\mu(\sum_{k=1}^nA_{1k})\leq 10\sigma_{20}\mu(\sum_{k=1}^n x_k).$$
Therefore,
$$\|\mu(X)\chi_{(0,1)}\|_E\leq C_E\|\sum_{k=1}^nx_k\|_E.$$
Fix $0<p<1$ such that $E\subset L_p$ (see Lemma 24 in \cite{pacific}). It follows from Lemma \ref{tail lemma} that
$$\|\mu(X)\chi_{(1,\infty)}\|_1\leq c_p\|\sum_{k=1}^nx_k\|_p\leq C_E\|\sum_{k=1}^nx_k\|_E.$$
Combining these estimates, we conclude the proof.
\end{proof}

\section{Further remarks}

Let $\{f_k\}_{k=1}^n$ be a sequence of positive independent random variables satisfying the condition $\sum_{k=1}^n\mathbb{P}(\{f_k>0\})\leq 1.$ It is stated in Lemma \ref{wbjohn lemma} that 
\begin{equation}\label{M}
\mu\Big(\bigoplus_{k=1}^nf_k\Big)\leq \sigma_2\mu\Big(\max_{1\leq k\leq n}f_k\Big).
\end{equation}

At the moment, we are unaware of a free version of \eqref{M}. We state it as an open problem.

\begin{problem}\label{KP} Does there exist an absolute constant $C>1$ such that for every sequence $(x_k)_{k=1}^n\subset S(\mathcal{M},\tau)$ of positive freely independent random variables such that
$$\sum_{k=1}^n\tau\big(supp(x_k)\big)\leq 1,$$
we have
$$\mu\Big(\bigoplus_{k=1}^nx_k\Big)\leq C\sigma_C\inf\Big\{\mu(a): x_n\leq a, \forall n\geq1, a\in L^+_0(\M) \Big\}?$$
\end{problem}

\medskip

\noindent{\bf Acknowledgements}
This work was completed when the first named author was visiting UNSW. He would like to express his gratitude to the School of Mathematics and Statistics of UNSW for its warm hospitality.

\end{document}